\documentclass[twoside,14pt, a4paper]{extarticle}

\usepackage{amsmath}
\usepackage{amssymb}
\usepackage{amsbsy}
\usepackage[utf8]{inputenc}
\usepackage{dsfont}
\usepackage{fancyhdr}
\usepackage[colorlinks=true, allcolors=blue]{hyperref}
\usepackage{graphicx}

\usepackage{enumerate}

\def\a{{\boldsymbol a}}

\def\n{{\boldsymbol n}}
\def\u{{\boldsymbol u}}
\def\v{{\boldsymbol v}}
\def\x{{\boldsymbol x}}

\def\dx{{\rm d}{\boldsymbol x}}
\def\ds{{\rm d}s}
\def\dt{{\rm d}t}

\def\R {{\mathds R}}
\def\N {{\mathds N}}

\newtheorem{theorem}{Theorem}[section]
\newtheorem{lemma}[theorem]{Lemma}
\newtheorem{proposition}[theorem]{Proposition}
\newtheorem{corollary}[theorem]{Corollary}
\newtheorem{definition}[theorem]{Definition}

\newtheorem{remark}[theorem]{Remark}

\newenvironment{proof}{{\bf Proof.}}{\hfill $\square$ \newline}

\begin{document}

\title{Toward generalized solutions of  the Keller--Segel equations with singular sensitivity and signal absorption via an algebraic manipulation finite element algorithm}
\author{Juan Vicente Gutiérrez-Santacreu\thanks{Dpto. de Matemática Aplicada I, E. T. S. I. Informática, Universidad de Sevilla. Avda. Reina Mercedes, s/n. E-41012 Sevilla, Spain.  E-mail: {\tt juanvi@us.es}. JVGS acknowledges partial support from Grant PID2023-149182NB-I00 and from the IMUS-María de Maeztu grant CEX2024-001517-M, both funded by MICIU/AEI/10.13039/501100011033, with additional support from ERDF/EU.}}

\date{\today}

\maketitle

{\bf 2010 Mathematics Subject Classification.}  35K20, 35K55, 65N30, 92C17. 

{\bf Keywords.} Keller--Segel equations; generalized solutions; stabilized finite-element approximation; convergence analysis.

\begin{abstract} 

The paper that follows describes a numerical algorithm to solve the parabolic-parabolic Keller--Segel system characterized by singular sensitivity and signal absorption in such a manner that the numerical approximations converge towards a generalized solution on two-dimensional polygonal domains as the time and space discretization parameters tend to zero. 

The algorithm employs an algebraic manipulation finite element method for space, while time remains continuous, based on introducing a stabilized term. This term is constructed via a graph-Laplacian operator and a shock detector for detecting extrema in finite element functions. Furthermore, the cross-diffusion term  also includes an algebraic manipulation, which is related to testing by a nodally interpolated, suitable nonlinear function involved in obtaining a discrete energy-like law leading to a priori estimates. This approach yields approximations that respect physical constraints at nodal points such as positivity and maximum principle, and maintaining mass properties as well. Compactness results are quite laborious due to the low regularity stemming from the a priori estimates and the discretization procedure itself. Finally,  the passage to the limit rests on testing by the product of a positive test function and a renormalization of the numerical solution.

\end{abstract}

\tableofcontents
\section{Introduction}
\subsection{Keller--Segel approach}

\emph{Escherichia coli} (E. coli) is a species of bacteria commonly studied in microbiology and frequently used as a model organism due to its complex dynamics, including movement, aggregation, and spatial organization. These bacteria exhibit a movement pattern known as run-and-tumble motion, characterized by relatively straight movements interrupted by random tumbles to reorient themselves, which leads to a form of biased random walk. Aggregation occurs as a consequence of the presence of gradients of some cue detected through multiple transmembrane receptors. In response to these gradients, E. coli bacteria move towards regions of higher concentrations of attractants, such as nutrients. This biological response results in spatial organizations within the colony in their surrounding environment.      

In order to understand how E. coli bacteria propagate through the environment, Keller and Segel \cite{Keller_Segel_1971} proposed an initial framework for describing taxis-driven pattern formation and propagation fronts. In doing so, two mechanisms were regarded. One mechanism involved the consumption of the attractant upon interaction with bacteria, with the specific nature of this consumption dependent on the biological context. The other mechanism concerned the sensitivity of bacteria to the attractant being directly proportional to the reciprocal of the attractant density. This means that the sensitivity of the organisms rises proportionally as the chemoattractant density decreases and vice versa. This dynamic response mechanism obeys the fact that E. coli bacteria become more responsive to lower density concentrations of the attractant.

Let $\Omega\subset \mathds{R}^2$  be a bounded open subset and denote $\partial\Omega$ its boundary, which we assume to be Lipschitzian. The two-dimensional Keller--Segel equations \cite{Keller_Segel_1971} with logarithmic sensitivity and signal absorption  are written as 
\begin{equation}\label{KS_1971}
\left\{
\begin{array}{rclcc}
\displaystyle
\partial_t u-\Delta u+ \chi\nabla\cdot(\frac{u}{v} \nabla v)&=&0&\mbox{ in }&\Omega\times (0,\infty),
\\
\displaystyle
\partial_t v-\Delta v+ uv&=&0&\mbox{ in }&\Omega\times (0, \infty).
\end{array}
\right.
\end{equation}
Here $u: \overline{\Omega}\times [0,\infty)\to [0,\infty)$ represents the bacterial density and $v: \overline{\Omega}\times [0,\infty)\to (0,\infty) $ represents the attractant density, which needs to meet the boundary conditions
\begin{equation}\label{BC}
\partial_\n u=0 \quad \mbox{ and }\quad\partial_\n v =0 \quad \mbox { on } \quad  \partial \Omega\times (0,\infty),
\end{equation}
and the initial conditions
\begin{equation}\label{IC}
u(0)=u_0\quad
\mbox{ and }\quad v(0)=v_0\quad\mbox{ in }\quad \Omega,
\end{equation}
where $\partial_\n u$ or $\partial_\n v$ denotes the derivative of $u$ or $v$ in the direction normal $\n$ to $\partial\Omega$. 

The mathematical theory of problem \eqref{KS_1971}--\eqref{IC} is incomplete. Generalized solutions have been shown to exist for system \eqref{KS_1971}--\eqref{IC} in \cite{Winkler_2016}, but the global-in-time solvability of weak or classical solutions or the development of blow up phenomena remains still open. The only result derived thus far concerns the existence of classical solutions globally in time for initial data under a certain smallness condition \cite{Black_2018}. These generalized solutions stem from mild-regularity bounds and can be considered a set containing  weak solutions, since they must satisfy a variational inequality. This inequality rules out any finite-time collapse into persistent Dirac-type measures and is fully consistent with classical solutions. In a sense, these generalized solutions are reminiscent of the notion of suitable weak solutions for the Navier-Stokes equations \cite{Scheffer_1977}.  Another characteristic of the solutions to problem \eqref{KS_1971}--\eqref{IC} found in \cite{Winkler_2016} is the formation of planar traveling waves \cite{Wang_2013, Chae_Choi_Kang_Lee_2018}.

As of the time of writing this paper, there exists no known algorithm capable of effectively producing discrete solutions for problem \eqref{KS_1971}--\eqref{IC}. The main reason for such absence turns out to be that it is very tricky to find numerical solutions that meet the underline properties coming from the continuous analysis. Physical constraints such as positivity or maximum and minimum principles are key properties for the analysis of equations \eqref{KS_1971}. Therefore reproducing these properties at the discrete level is a feature that is highly desired in applications and greatly facilitates the convergence analysis of algorithms from a numerical perspective. A priori bounds for problem \eqref{KS_1971}--\eqref{IC}, which yield generalized solutions, are typically based on energy-like laws rooted in physical constraints. These bounds are obtained by testing against nonlinear functions. However, this approach presents some difficulties, particularly when employing finite element methods, since nonlinear functions do not belong to the finite element space being linear. This variance leads to an outstanding obstacle in devising effective numerical algorithms for problem \eqref{KS_1971}--\eqref{IC}.                   
 
If the logarithmic sensitivity is substituted with a linear sensitivity and the consumption term is replaced by a linear production and a degradation of attractant, one is led to the Keller--Segel equations \cite{Keller_Segel_1970}  
\begin{equation}\label{KS_1970}
\left\{
\begin{array}{rcll}
\partial_tu-\Delta u&=&-\nabla\cdot(u\nabla v)&\mbox{ in } \Omega\times(0,\infty),
\\
\partial_t v -\Delta v&=&u-v&\mbox{ in } \Omega\times(0,\infty).
\end{array}
\right.
\end{equation}
This system has been subject to an intensive numerical investigation in recent years and is closely related to  \eqref{KS_1971}. Both systems \eqref{KS_1971} and \eqref{KS_1970} satisfy physical constraints and a priori bounds coming from testing by nonlinear functions. One important difference between both systems is the concept of solution. It is known \cite{Nagai_Senba_Kiyoshi_1997} that if the total mass of bacterial density is bounded by $4\pi$ , a classical solution exists for system \eqref{KS_1970}; otherwise, there are initial data that lead to blowup phenomena \cite{Herrero_Velazquez_1997, Horstamann_Wang_2001}. Despite the development of several numerical strategies for tackling system \eqref{KS_1970}, very few of them consider all of the above-mentioned properties  \cite{Huang_Shen_2021, Badia_Bonilla_GS_2023, AS_GG_RG_2023}. 

The convergence analysis for systems \eqref{KS_1971} and \eqref{KS_1970} is not typically addressed in the literature.  Passing to the limit of the would-be approximate solutions of system \eqref{KS_1971}  is far more involved than that for system \eqref{KS_1970}. This is due to the fact that \emph{a priori} bounds for system \eqref{KS_1970}  provide enough control over derivatives for the approximate solutions and hence the concept of solution does differ.      

In this paper, we propose an algebraic manipulation finite element method, which enhances the standard finite element method for \eqref{KS_1971} by incorporating a stabilizing term. This term utilizes a shock detector in combination with a graph-Laplacian operator to achieve stabilization, which avoids the appearance of non-physical oscillations. Numerical solutions from the resulting method enjoy physical constraints such as positivity and  a discrete maximum principle; furthermore mass is preserved. In obtaining a priori bounds, one needs to test $\eqref{KS_1971}_1$ against the nonlinear function $\displaystyle\frac{1}{1+u}$ to get bounds of $\log(1+u)$.  The numerical treatment of the cross-diffusion term $-\nabla\cdot(u\nabla v)$ in $\eqref{KS_1971}_1$  is the major source of difficulties.  An approximation of $u$ is constructed by using $\frac{1}{1+u}$ and $\log(1+u)$ for algebraically manipulating $-\nabla\cdot(u\nabla v)$ . This manipulation is compatible with attaining a priori bounds over $\log(1+u)$ at the discrete level. Additionally,  mass lumping is applied to the time and reaction terms. 

The convergence analysis rests on Vitali's convergence theorem for proving that  the approximations of $u$ converge strongly in $L^1_{\rm loc}(0,\infty; L^1(\Omega))$ and of $-\log(\frac{v}{\|v_0\|_{L^\infty(\Omega)}})$ in \break $L^2_{\rm loc}(0,\infty; H^1(\Omega))$. This last convergence is accomplished by comparing $\eqref{KS_1971}_2$ with its numerical approximation. The passage to the limit requires using Chebyshev's inequality and the continuity of the Lebesgue integral.

\subsection{Outline}

Section 2 begins with some notation and some assumptions about the domain and the triangulation thereof.  Then the numerical tools for approximating are presented to develop the finite element method. Finally, the statement of the main theorem concerning the convergence of the numerical solutions to a generalized solution of problem \eqref{KS_1971}--\eqref{IC} is stated. Before demonstrating our main result, Section 3 compiles various technical preliminaries. In Section~4, the proof is organized in several subsections: positivity and maximum discrete principle, a priori bound, weak and strong convergences, and passage to the limit. 

\section{Main result}
This section is devoted to setting out  the assumptions and tools utilized for developing the finite element approximation of problem \eqref{KS_1971}--\eqref{IC}. It ends up with the presentation of our main theorem regarding convergence toward generalized solutions, including their definition.

\subsection{Assumptions and notation} 
With a view toward starting our main theorem we enumerate the required assumptions. These involve the domain, the family of meshes, and the associated finite element spaces. 
\begin{enumerate}
\item [(A1)] Let $\Omega$ be a bounded domain of $\R^2$ with a polygonal boundary.
\item[(A2)] Let $\{{\mathcal T}_{h}\}_{h>0}$  be a family of weakly acute, quasi-uniform triangulations of  $\overline{\Omega}$ made up of triangles, so that $\overline \Omega=\cup_{T\in {\mathcal T}_h}T$, where $h=\max_{T\in \mathcal{T}_h} h_T$, with $h_T$ being the diameter of $T$. Moreover,  let  $\mathcal{N}_h=\{\a_i \}_{i=0}^I$ be the coordinates of the nodes of $\mathcal{T}_h$.

\item [(A3)]  Associated with  ${\mathcal T}_h$ is the finite element space
$$
X_h = \left\{ x_h \in {C}^0(\overline\Omega) \;:\;
x_h|_T \in \mathcal{P}_1(T), \  \forall T \in \mathcal{T}_h \right\},
$$
where $\mathcal{P}_1(T)$ is the set of linear polynomials on  $T$.

\end{enumerate}

The dual of $X_h$ is denoted as $X'_h$ and the set of linear operators from $X'_h$ to $X_h$ as $\mathcal{L}(X_h', X_h)$. Let $\{\varphi_{\a_i}\}_{i\in I}$ be the standard basis functions for $X_h$, where $\Delta_{\a_i}={\rm supp}\, \varphi_{\a_j}$ and $\mathcal{N}_h(\Delta_{\a_i})=\{\a_j\in \mathcal{N}_h \,:\, \a_j\in \Delta_{\a_i}\}$. For $i\in I$, define $\mathcal{N}_h^{\rm sym}(\Delta_{\a_i})$ as the set of symmetric nodes $\boldsymbol{a}_{ij}^{\rm sym}$ with respect to $\boldsymbol{a}_i\in\mathcal{N}_h$. These symmetric nodes are constructed as points at the intersection between the line passing through $\a_i$ and $\a_j$ and $\partial\Omega{\a_i}$, excluding $\a_j$. See Figure~\ref{fig:a_ij^sym} for illustration.
\begin{figure}[h!]
\centering
\includegraphics[width=0.3\textwidth]{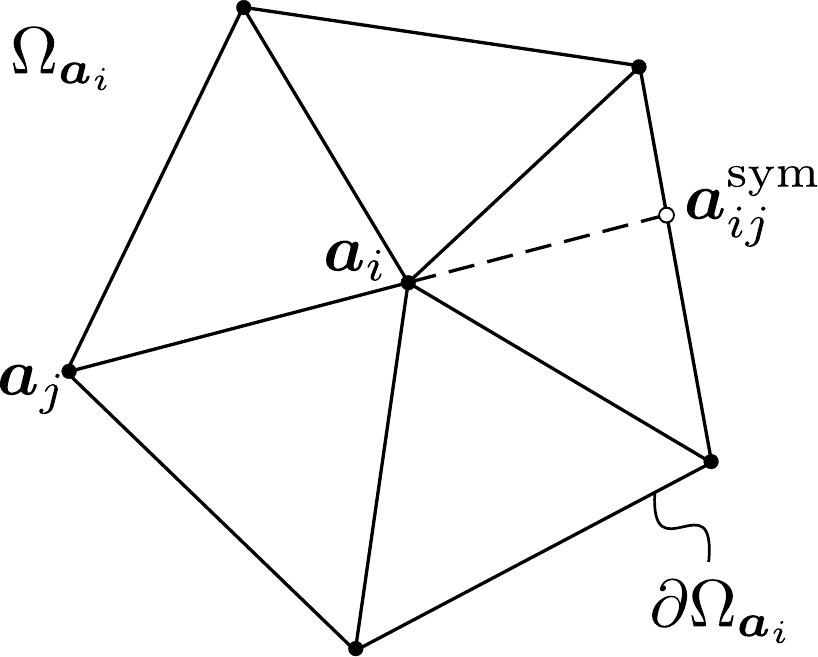}
\caption{Symmetric node $\a^{\rm sym}_{ij}$ to $\a_j$ from $\a_i$}
\label{fig:a_ij^sym}
\end{figure}

We denote $i_h : C(\Omega) \to  X_h$ as the standard nodal interpolation operator, such that $i_h x(\a_i)=x(\a_i)$ for all $i\in I$. The discrete inner product  $(\cdot,\cdot)_h : X_h\times X_h\to \R$ on $X_h$ is defined by
$$
(x_h,\bar x_h)_h=\int_\Omega i_h(x_h(\x)\bar x_h(\x))\, \dx,
$$
associated with 
$$
\|x_h\|_h=(x_h,x_h)_h^{\frac{1}{2}}.
$$

For each $i\in I$, we associate $T_{\a_i}\in \mathcal{T}_h$ such that $\a_i\in T_{\a_i}$. Then, if $x\in L^1(\Omega)$, define $j_h: L^1(\Omega)\to X_h$ to be the average interpolation operator as 
\begin{equation}\label{def:jh}
j_h x(\a)=\frac{1}{|T_{\a_i}|}\int_{T_{\a_i}} x(\x) \dx.
\end{equation}
and $sz_h: L^1(\Omega)\to X_h$ to be the Scott-Zhang interpolation operator constructed in \cite{Girault_Lions_2001, Scott_Zhang_1990}.
   
Let $q_h: L^1(\Omega) \to X_h $ be such that  
\begin{equation}\label{def:qh}
(q_h x, x_h)_h=(x, x_h)\quad \mbox{for all}\quad x_h\in X_h.
\end{equation}

Finally, we introduce some shorthand notation. Given $x_h\in X_h$, we  write $x_h(\a_i)=x_i$ for all $i\in I$ and $\delta_{ij}x_h=x_i-x_j$.

\begin{remark} It is well here to remark that the condition on $\mathcal{T}_h$ of being weakly acute refers to the property that, for all $\varphi_{\a_i}, \varphi_{\a_j}$, where $i\neq j$,
$$
(\nabla \varphi_{\a_j}, \nabla\varphi_{\a_i})\le 0
$$
holds. This is guaranteed when  the sum of opposite angles relative to any side does not exceed $\pi$, which is ensured by the Delaunay triangulations.  
\end{remark}

\subsection{Finite element approximation }
We restrict ourselves to a semidiscrete, continuous-in-time, finite element approximation for problem \eqref{KS_1971}--\eqref{IC}.  The staring point is to manipulate algebraically a standard finite element approximation for \eqref{KS_1971} by introducing two stabilizing terms --one for each finite element equation. These stabilizing terms are thought to satisfy different physical bounds: ensuring positivity in the numerical solutions of $\eqref{KS_1971}_1$ and enforcing positivity and a discrete maximum principle for the numerical solutions of $\eqref{KS_1971}_{2}$; thereby guaranteeing that the subsequent analysis is well-defined. Furthermore, the new discretization of the cross-diffusion term, built to tackle an energy-like inequality, must be compatible with preserving the above-mentioned physical bounds and satisfy a discrete Leibniz rule for spatial derivatives of nonlinear functions; a key aspect of generalized solutions.

A previous step to our purposes is to rewrite the cross-diffusion term as 
$$
\nabla\cdot(\frac{u}{v}\nabla v)=\nabla\cdot(u\nabla\log v).
$$

Let $(u_{0h}, v_{0h})\in X_h\times X_h$ be an approximation of $(u_0, v_0)$. Then our numerical algorithm reads as.  Set $u_h^0=u_{0h}$, $v^0_h=v_{0h}$ and find  $(u_h,  v_h)\in C^1([0,\infty); X_h)\times C^1([0,\infty); X_h)$ such that, for all $(\bar u_h,\bar v_h)\in X_h\times X_h$,  
\begin{equation}\label{eq:u_h}
\begin{array}{c}
\displaystyle
(\partial_t u_h, \bar u_h)_h+(\nabla u_h, \nabla \bar u_h)
-(u_h\nabla i_h \log v_h, \nabla \bar u_h)_*+(B(u_h, v_h) u_h, \bar u_h)=0,
\end{array}
\end{equation}
\begin{equation}\label{eq:v_h}
\begin{array}{rcl}
(\partial_t v_h, \bar v_h)_h +(\nabla v_h,\nabla \bar v_h)+(u_h v_h, \bar v_h)_h=0.
\end{array}
\end{equation}

The term  $(u_h\nabla i_h \log v_h, \nabla \bar u_h)_*$ is designed to enable the derivation of \emph{a priori} estimates through testing with nonlinear functions (e.g., $\bar u_h = -i_h\frac{1}{1+u_h}$), which is essential in handling the singular sensitivity structure.

The stabilization term $(B(u_h, v_h) u_h, \bar u_h)$ is constructed consistently with this discretization to preserve the nonnegativity of $u_h$. Indeed, relying solely on the weak acuteness of the mesh is not sufficient to guarantee this property at the discrete level.

Therefore, this term compensates for the loss of monotonicity induced by the nonlinear chemotaxis contribution and ensures discrete nonnegativity.

To the best of our knowledge, this specific combination of discretization and stabilization for singular sensitivity has not been previously proposed.

The new discrete cross-diffusion term is designed in the spirit of \cite{Badia_Bonilla_GS_2023,Bonilla_GS_2024}. It begins with  
$$
\begin{array}{rcl}
(x_h\nabla i_h \log \tilde x_h,\nabla \bar x_h)&=&\displaystyle
\sum_{k,j,i\in I} x_k \log \tilde x_j  \bar x_i (\varphi_{\a_k} \nabla \varphi_{\a_j}, \nabla\varphi_{\a_i}) 
\\
&=&\displaystyle
\sum_{\mbox{\tiny $\begin{array}{c}k\in I \\i<j\in I\end{array}$}} x_k (\log \tilde x_j- \log \tilde x_i)  (\bar x_i-\bar x_j) (\varphi_{\a_k} \nabla \varphi_{\a_j}, \nabla\varphi_{\a_i}).
\end{array}
$$
and approximates $x_k$ by $\tau_{ij}$, depending on $x_i$ and $x_j$. This leads to 
\begin{equation}\label{New_KS_discretization}
(x_h\nabla i_h \log\tilde x_h, \nabla \bar x_h)_*=\sum_{i<j\in I}\tau_{ji}(x_h) \delta_{ji}\log\tilde x_h \delta_{ij}\bar x_h(\nabla \varphi_{\boldsymbol{a}_j},\nabla\varphi_{\boldsymbol{a}_i}),
\end{equation}
with
\begin{equation}\label{def:tau_ij}
\tau_{ji}(x_h)=\left\{
\begin{array}{ccl}
\displaystyle
\Lambda_{ij}(u_h) \frac{\delta_{ji}\log(1+x_h)}{\delta_{ij}\frac{1}{1+x_h}}&\mbox{ if }& x_j\not=x_i,
\\
 x_i&\mbox{ if }& x_j=x_i,
\end{array}
\right.
\end{equation}
where
$$
\Lambda_{ij}(u_h)=\frac{1}{2}\left(\frac{x_i}{1+x_i}+\frac{x_j}{1+x_j}\right).
$$

Before defining the stabilizing term $B$, we need to introduce the shock detectors $\alpha_{\a_i}$   associated with each one. 
For each $i \in I$, let $\alpha_{\a_i}: X_h\to \R$  be such that, for each $q \in \mathds{R}^+$ and $x_h\in X_h$, 
$$
\alpha_{\a_i}(x_h) = \left\{
\begin{array}{cc}  
\left[
\frac{\left[{\sum_{j\in I(\Delta_{\a_i})} [\![\nabla x_h]\!]_{ij}}\right]_+}{\sum_{j\in I(\Delta_{\a_i})} 2\{\!\!\{|\nabla x_h \cdot \hat{\boldsymbol{r}}_{ij}|\}\!\!\}_{ij}}
\right]^q & \text{if } 
\sum_{j\in I(\Delta_{\a_i})} \{\!\!\{|\nabla x_h\cdot \hat{\boldsymbol{r}}_{ij}|\}\!\!\}_{ij} \neq 0, 
\\
0 & \text{otherwise},
\end{array}
\right. 
$$
where $[a]_{+}=\max\{0,a\}$, and 
$$
[\![\nabla x_h]\!]_{ij} =\frac{x_j - x_i}{|\boldsymbol{r}_{ij}|} + \frac{x_j^{\rm sym} - x_i}{|\boldsymbol{r}_{ij}^{\rm sym}|}, 
$$
and
$$
\{\!\!\{|\nabla x_h\cdot \hat{\boldsymbol{r}}_{ij}|\}\!\!\}_{ij} =\frac{1}{2}
\left(\frac{|x_j-x_i|}{|\boldsymbol{r}_{ij}|}+\frac{|x_j^{\rm sym}-x_i|}{|\boldsymbol{r}_{ij}^{\rm sym}|}\right),
$$
with $\boldsymbol{r}_{ij} = \a_j - \a_i$, whose normalization is $\hat{\boldsymbol{r}}_{ij} = \frac{\boldsymbol{r}_{ij}}{|\boldsymbol{r}_{ij}|}$, and $\boldsymbol{r}_{ij}^{\rm sym} = \a_{ij}^{\rm sym} - \a_i$ for $\a^{\rm sym}_{ij}\in\mathcal{N}^{\rm sym}_h(\Delta_{\a_i})$. 

The key characteristic of $\alpha_\a$, for $a\in\mathcal{N}_h$, lies in their role in localizing extrema. This is formalized in the following.   
\begin{lemma}\label{lm:alpha_i} If $x_h\in X_h$ reaches a minimum at $\a\in\mathcal{N}_h$, then it follows that $\alpha_{\a}(x_h)=1$; otherwise $ 0\le \alpha_{\a}(x_h)\le1$ for all $\a\in\mathcal{N}_h$. 
\end{lemma}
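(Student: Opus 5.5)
The proof will work directly from the definition of $\alpha_{\a_i}$. Write $d_{ij}=\frac{x_j-x_i}{|\boldsymbol{r}_{ij}|}$ and $d^{\rm sym}_{ij}=\frac{x^{\rm sym}_j-x_i}{|\boldsymbol{r}^{\rm sym}_{ij}|}$. Then
\[
[\![\nabla x_h]\!]_{ij}=d_{ij}+d^{\rm sym}_{ij},
\qquad
2\{\!\!\{|\nabla x_h\cdot\hat{\boldsymbol r}_{ij}|\}\!\!\}_{ij}=|d_{ij}|+|d^{\rm sym}_{ij}|.
\]
Summing over $j\in I(\Delta_{\a_i})$ and calling the two sums $N$ and $D$, we have $\alpha_{\a_i}(x_h)=([N]_+/D)^q$ when $D\neq0$, and $\alpha_{\a_i}(x_h)=0$ otherwise.

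First I will prove the general bound $0\le\alpha_{\a}(x_h)\le1$. If $D=0$ there is nothing to show. If $D\neq 0$, the triangle inequality gives $|N|\le\sum_j(|d_{ij}|+|d^{\rm sym}_{ij}|)=D$. Hence $0\le [N]_+/D\le 1$, and raising to the power $q>0$ keeps the quotient in $[0,1]$.

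Next comes the minimum case. Suppose $x_h$ attains its minimum at $\a_i$, understood as a minimum over the patch $\Delta_{\a_i}$, which includes a global minimum. The point $\a^{\rm sym}_{ij}$ lies on $\partial\Delta_{\a_i}$, and $x^{\rm sym}_j=x_h(\a^{\rm sym}_{ij})$ is obtained by linear interpolation of nodal values on an edge of the patch. Since $x_h$ is piecewise linear, its minimum over the patch is attained at nodes, so $x_j\ge x_i$ and $x_j^{\rm sym}\ge x_i$. Every $d_{ij}$ and $d^{\rm sym}_{ij}$ is therefore nonnegative, which gives $N=D$. If $D\neq0$, then $[N]_+/D=1$ and $\alpha_{\a_i}(x_h)=1$.

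The main obstacle is the degenerate case $D=0$, where all neighbouring values equal $x_i$ and $x_h$ is constant on the patch. The definition then returns $\alpha=0$, not $1$. I would handle this by interpreting ``reaches a minimum'' as a strict or nontrivial local extremum, so that $D>0$, and I would state this convention explicitly in the proof. A second, smaller point is to check that $\a^{\rm sym}_{ij}$ really lies in $\Delta_{\a_i}$, so that the local minimum property applies to $x^{\rm sym}_j$. This follows from the construction in Figure~\ref{fig:a_ij^sym}: the symmetric node is the intersection of the line through $\a_i$ and $\a_j$ with $\partial\Delta_{\a_i}$.
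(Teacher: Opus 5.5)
Your argument is correct. The paper states this lemma without any proof; it is simply asserted after the definition of $\alpha_{\a_i}$. Your direct verification from the definition is exactly what is needed:
\begin{itemize}
\item $|N|\le D$ by the triangle inequality, which gives $0\le\alpha_{\a}(x_h)\le 1$;
\item at a minimum every one-sided difference quotient is nonnegative, so $N=D$.
\end{itemize}
For the second point you do not need a global or patchwise minimum. A local discrete minimum ($x_k\ge x_i$ for all $k\in I(\Delta_{\a_i})$) suffices, because $x^{\rm sym}_j$ is a convex combination of two nodal values of the patch. This is the representation $u^{\rm sym}_k=\gamma u_{k_1}+(1-\gamma)u_{k_2}$ that the paper itself uses in the Lipschitz estimate.

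Your caveat about $D=0$ is not a weakness of your proof but a genuine defect in the statement. If $x_h$ is constant on $\Delta_{\a}$, then $\a$ is a (non-strict) minimum, yet the definition returns $\alpha_{\a}(x_h)=0$; so the lemma as written is false there.

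Restricting to minima at which some neighbouring nodal value differs from $x_i$ (equivalently $D>0$) is the right fix, and it costs nothing downstream:
\begin{itemize}
\item The identity $\alpha_{\a_i}(u_h)=1$ is only invoked in Step~1 of the positivity proof, for indices $j\in I^*_c(\Delta_{\a_i})$. There $u_j\neq u_i$ for some neighbour, which forces $D>0$; if $I^*_c(\Delta_{\a_i})$ is empty, the value of $\alpha_{\a_i}$ is irrelevant.
\item All other uses of the lemma (the Lipschitz estimate for $F^u_3$ and the bound on $\beta_{ji}$ in the compactness section) need only $0\le\alpha\le1$, which you prove unconditionally.
\end{itemize}

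Your remark that $\a^{\rm sym}_{ij}\in\Delta_{\a_i}$ ``follows from the construction'' holds at interior nodes, but not always at boundary nodes. There the symmetric point can degenerate: for example, if $\a_i\in\partial\Omega$ and $\a_i-\a_j$ points out of $\overline\Omega$, the only other intersection of the line with $\partial\Delta_{\a_i}$ is $\a_i$ itself, so $|\boldsymbol r^{\rm sym}_{ij}|=0$. Your argument still goes through for any convention under which $x^{\rm sym}_j$ stays a convex combination of nodal values in $\mathcal N_h(\Delta_{\a_i})$, or under which the symmetric term is dropped.
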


In defining the stabilizing operator $B$, we proceed as follows. For each $x_h, \bar x_h\in X_h\times X_h$, let $B(x_h, \bar x_h): X_h\times X_h\to \mathcal{L}(X'_h,X_h)$ be such that  
\begin{equation}\label{Bu}
(B(x_h, \bar x_h) \tilde x_h,  \hat x_h)=\sum_{i<j\in I}\beta_{ji}(x_h, \bar x_h) \delta_{ji} \tilde x_h \delta_{ji}\hat x _h\quad \mbox{ for all }\quad  \tilde x_h,\hat x_h\in X_h,
\end{equation}
where
\begin{equation}\label{def:beta^u_ji}
\beta_{ji}(x_h,\bar x_h)=\left\{
\begin{array}{ccl}
\max\{\alpha_{\a_i}(x_h) f_{ij}(x_h, \bar x_h), \alpha_{\a_j}(x_h)f_{ji}(x_h, \bar x_h), 0\} &\mbox{ for } & i\not= j,
\\[1.ex]
\displaystyle
\sum_{j\in I(\Delta_{\a_i})\backslash\{i\}}\beta_{ij}(x_h, \bar x_h) &\mbox{ for } & i= j,
\end{array}
\right.
\end{equation}
with
\begin{equation}\label{def:fij^u}
f_{ij}(x_h, \bar x_h)=\left\{
\begin{array}{ccl}
\displaystyle
\delta_{ji}\log\bar x_h \Big[\frac{x_i}{\delta_{ji} x_h}-\frac{\tau_{ji}(x_h)}{\delta_{ji}x_h}\Big] (\nabla \varphi_{\a_j}, \nabla \varphi_{\a_i})&\mbox{ if }& x_j\not=x_i,
\\
0&\mbox{ if }&x_j=x_i.
\end{array}
\right.
\end{equation}

\subsection{Statement of the result }
In the context of the equations modeling chemotaxis, it is sometimes unavoidable to use a suitable concept of solution compatible with classical solutions. For certain Keller--Segel models it is extremely difficult to find smooth solutions, in the sense that all the space-time derivatives in the model exist in the usual classical sense, or even weak solutions, where space-time derivatives are weakly defined in Sobolev spaces. The new framework is the concept of generalized solutions, which can be understood as a weak supersolution and is closely connected to smooth solutions. This connection makes reference to the fact that generalized solutions that are smooth satisfy system \eqref{KS_1971} classically. With this more general definition of solution it is possible to prove existence (but not uniqueness).

The concept of generalized solution for problem \eqref{KS_1971}-\eqref{BC} is understood as follows.
\begin{definition}\label{def:gener_sol} Assume that $(u_0, v_0)\in L^1(\Omega)\times L^\infty(\Omega)$ are such that $u_0\ge 0$ and $v_0>0$. A pair $(u,v)$ is called a generalized solution of the boundary-value problem \eqref{KS_1971}--\eqref{IC} if
$$
u\ge 0\mbox{ a. e. in } \Omega\times(0,\infty),  
$$
$$
0<v<\|v_0\|_{L^\infty(\Omega)}  \mbox{ a. e. in } \Omega\times(0,\infty),
$$
$$
u\in L^1_{\rm loc}(0,\infty; L^1(\Omega))
$$ 
and 
$$
v\in L^\infty((0,\infty)\times\Omega)\cap L^2_{\rm loc}(0,\infty; H^1(\Omega))
$$
in addition to 
$$
\nabla\log(u+1)\in L^2_{\rm loc}(0,\infty; L^2(\Omega))
$$
and
$$
\nabla \log v\in L^2_{\rm loc}(0,\infty; L^2(\Omega))
$$
such that
$$
\begin{array}{c}
\displaystyle
-\int_0^\infty (\log(u + 1),\varphi)\, \dt - (\log(u_0 + 1), \varphi(0)) \ge \int_0^\infty(|\nabla \log(u + 1)|^2,\varphi)\,\dt
\\
\displaystyle
- \int_0^\infty (\nabla \log(u + 1), \nabla\varphi) \dt -\int_0^\infty(\frac{u}{u + 1} \nabla\log(u + 1), \varphi\nabla \log v ) \dt 
\\
\displaystyle
+ \int_0^\infty(\frac{u}{u + 1} \nabla\log v, \nabla\varphi)\,\dt
\end{array}
$$
for all $\varphi\in C_0^\infty (\Omega \times [0, \infty ))$ with $\varphi\ge0$, and
$$
\int_0^\infty(v, \partial_t\psi)\dt - (v_0, \psi(0))=\int_0^\infty(\nabla v, \nabla\psi)\dt+\int_0^\infty (vu, \psi) \,\dt
$$
for all $\psi\in C_0^\infty (\Omega \times [0, \infty ))$.
\end{definition}

We are now prepared to present the main theorem of this paper.
\begin{theorem}\label{Th:main} Assume that assumptions $\rm (A1)$--$\rm(A3)$ are satisfied. Let $(u_0,v_0)\in L^1(\Omega)\times L^\infty(\Omega)$ with $u_0\ge 0$ and $ v_0 > 0$ such that  there exists $\vartheta>0$ satisfying  
\begin{equation}\label{cond_log:v_0}
-\int_\Omega \log\frac{v_0(\x)}{\|v_0\|_{L^\infty(\Omega)}}\le\vartheta,
\end{equation}
and take $(u_{0h}, v_{0h})\in X_h\times X_h$ such that  $u_{0h}=j_h u_0$ and $v_{0h}=j_h v_0$.  Then the discrete solution pair $\{(u_h, v_h)\}_{h>0}$ constructed by algorithm \eqref{eq:u_h} and \eqref{eq:v_h} converges, up to subsequences, toward a generalized solution of problem \eqref{KS_1971}-\eqref{IC} on $(0,\infty)$. 
\end{theorem}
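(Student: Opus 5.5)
The proof follows the outline announced in the introduction and has four stages: discrete physical bounds, a priori estimates, compactness, and passage to the limit.

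\emph{Stage 1: well-posedness and discrete bounds.} The semidiscrete scheme \eqref{eq:u_h}--\eqref{eq:v_h} is a finite-dimensional ODE system with locally Lipschitz right-hand side, as long as $v_h>0$ at the nodes. I will show positivity and the discrete maximum principle at the nodes, which gives global existence.

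For $v_h$: mass lumping together with weak acuteness ($(\nabla\varphi_{\a_j},\nabla\varphi_{\a_i})\le0$ for $i\ne j$) makes the nodal equations of M-matrix type, with reaction coefficient $u_i\ge0$. This yields
\[
0< v_i(t)\le \|v_{0h}\|_{L^\infty(\Omega)}\le \|v_0\|_{L^\infty(\Omega)},
\]
and the last inequality holds because $j_h$ is an average.

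For $u_h$: at a node where $u_i$ attains a minimum, Lemma~\ref{lm:alpha_i} gives $\alpha_{\a_i}=1$. The definition \eqref{def:beta^u_ji}--\eqref{def:fij^u} of $\beta_{ji}$ is designed so that, once the stabilization is added, the off-diagonal coefficients multiplying $u_j-u_i$ in the combined diffusion, cross-diffusion and stabilization terms are nonnegative. Hence $\partial_t u_i\ge0$ at a vanishing minimum, which gives $u_i\ge0$. Testing with $\bar u_h=1$ gives conservation of mass, $\|u_h(t)\|_{L^1}=\|u_{0h}\|_{L^1}$, and the lumped norm is equivalent to the $L^1$ norm because $u_h\ge0$.

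\emph{Stage 2: a priori bounds.}

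\begin{itemize}
\item Test \eqref{eq:u_h} with $\bar u_h=-i_h\frac{1}{1+u_h}$. By the choice \eqref{def:tau_ij} of $\tau_{ji}$, the discrete cross-diffusion term turns into a sum
\[
\sum_{i<j}\Lambda_{ij}\,\delta_{ji}\log(1+u_h)\,\delta_{ji}\log v_h\,(\nabla\varphi_{\a_j},\nabla\varphi_{\a_i}).
\]
This is a discrete analogue of $\int\frac{u}{1+u}\nabla\log(1+u)\cdot\nabla\log v$, and it is controlled by Young's inequality because $\Lambda_{ij}\le1$.
\item The diffusion and stabilization terms yield the graph quantity $\sum_{i<j}\delta_{ij}u_h\,\delta_{ij}\frac{1}{1+u_h}(\ldots)$. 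This dominates a discrete $\|\nabla i_h\log(1+u_h)\|^2$, using the elementary inequality $(a-b)\left(\frac{1}{1+b}-\frac{1}{1+a}\right)\ge(\log(1+a)-\log(1+b))^2$ and quasi-uniformity.
\item Test \eqref{eq:v_h} with $\bar v_h=-i_h\frac1{v_h}$ and use \eqref{cond_log:v_0} together with mass conservation. This gives a bound on $w_h=-i_h\log(v_h/\|v_0\|_{L^\infty})\ge0$ in $L^\infty(0,T;L^1)\cap L^2(0,T;H^1)$.
\item Test \eqref{eq:v_h} with $v_h$ to get $v_h$ bounded in $L^2(0,T;H^1)$.
\end{itemize}

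Combined, these give $\nabla i_h\log(1+u_h)$ bounded in $L^2(0,T;L^2)$. The analogous bound for the continuous-time nonlinear function $\log(1+u_h)$ itself follows by inverse/interpolation estimates.

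\emph{Stage 3: compactness.} The aim is the following convergences:
\begin{itemize}
\item $i_h\log(1+u_h)$ converges weakly in $L^2H^1$ and strongly in $L^2L^2$, by an Aubin--Lions type argument using a bound on $\partial_t$ in a dual norm, obtained from the tested equation with test function $q_h$ of smooth functions;
\item $u_h\to u$ a.e., after comparing $u_h$ with $i_h$ of nonlinear functions via nodal interpolation error estimates;
\item $u_h\to u$ strongly in $L^1_{\rm loc}L^1$, by Vitali's theorem, with equi-integrability taken from the $L\log L$-type bound given by the $\log(1+u)$ estimate in $L^2H^1\subset L^2L^p$;
\item $v_h$ converges strongly in $L^2L^2$, and $w_h$ strongly in $L^2H^1$.
\end{itemize}

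The strong $H^1$ convergence of $w_h$ is, I expect, the main obstacle. I would prove it by an energy identity: compare the limsup of $\int\!\int\varphi|\nabla w_h|^2$, computed from the discrete equation for $v_h$ tested with $i_h(\varphi/v_h)$, with the same identity for the limit equation, which holds thanks to the weak formulation of $v$ and a renormalization. The consistency errors of mass lumping and of $i_h$ applied to nonlinear functions must vanish, and on the low-regularity sets $\{u_h>M\}$ this is handled by Chebyshev's inequality and absolute continuity of the Lebesgue integral.

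\emph{Stage 4: limit passage.} Test \eqref{eq:u_h} with $\bar u_h=-i_h\big(\frac{\varphi}{1+u_h}\big)$, where $\varphi\ge0$ is smooth, and pass to the limit:
\begin{itemize}
\item the term $|\nabla\log(1+u)|^2\varphi$ is treated by weak lower semicontinuity, which produces the inequality in Definition~\ref{def:gener_sol};
\item the mixed terms $\frac{u}{1+u}\nabla\log(1+u)\cdot\nabla\log v\,\varphi$ converge as products of a weakly and a strongly convergent sequence, with $\frac{u}{1+u}$ bounded and converging a.e.;
\item the stabilization term vanishes in the limit because the $f_{ij}$ are $O(h)$-weighted differences, bounded by the dissipation.
\end{itemize}
The $v$ equation passes to the limit linearly; the term $uv$ is handled using $v\in L^\infty$ and the strong $L^1$ convergence of $u_h$.
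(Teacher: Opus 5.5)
Your outline follows the paper: nodal positivity via the shock detector, the tests $-i_h\frac{1}{1+u_h}$, $-i_h\frac{1}{v_h}$ and $v_h$, Aubin--Lions, Vitali, an energy comparison for $\nabla w_h$, and the renormalized test $i_h\frac{\varphi}{1+u_h}$. Two steps, however, fail as you justify them.

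\emph{Equi-integrability of $\{u_h\}$.} A bound on $\log(1+u_h)$ in $L^2(0,T;L^p(\Omega))$, for any fixed $p<\infty$, neither prevents concentration nor gives an $L\log L$ bound. Take $u=\varepsilon^{-1}\mathbf{1}_{E_\varepsilon}$ with $|E_\varepsilon|=\varepsilon$. It has unit mass and $\|\log(1+u)\|_{L^p(\Omega)}^p\approx\varepsilon\log^p(1/\varepsilon)\to0$, yet $\int_\Omega u\log(1+u)\dx\to\infty$ and $u$ concentrates into a Dirac mass. What excludes this is the gradient bound in two dimensions, i.e.\ the exponential integrability of $H^1(\Omega)$, which the embedding $H^1\subset L^p$ throws away. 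The paper applies the discrete Moser--Trudinger inequality \eqref{Moser-Trudinger} to $x_h=2i_h\log(1+u_h)$, so that $i_he^{x_h}=i_h(1+u_h)^2\ge(1+u_h)^2$. Together with \eqref{bound:log(1+uh)_I} this gives $\int_0^t\log\big(\frac{1}{|\Omega|}\int_\Omega(1+u_h)^2\dx\big)\ds\le L$ uniformly in $h$. It then splits $(n,n+1)$ into two sets of times. On times with $\|u_h\|_{L^2(\Omega)}^2<\xi$ it uses Cauchy--Schwarz. The remaining times have measure at most $L/\log(\xi/|\Omega|)$, and there it uses mass conservation. Your $L\log L$ idea can be rescued from the same estimate. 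Set $M=|\Omega|+\|u_{0h}\|_{L^1(\Omega)}$ and apply Jensen's inequality to the probability measure $(1+u_h)\dx/M$; this yields $\int_\Omega(1+u_h)\log(1+u_h)\dx\le M\log\big(\frac{1}{|\Omega|}\int_\Omega(1+u_h)^2\dx\big)$, whose time integral is bounded. Either way, Moser--Trudinger is the indispensable ingredient.

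\emph{The stabilization term.} After testing, $(B(u_h,v_h)u_h,\,i_h\frac{\varphi}{1+u_h})$ splits into two parts. One is a leading part $\sum_{i<j}\beta_{ji}\varphi_i\frac{\delta_{ji}^2u_h}{(1+u_i)(1+u_j)}$, up to the sign convention of the test function. The other is a remainder containing $\delta_{ji}\varphi$. Only the remainder is $O(h)$, and its smallness comes from $|\delta_{ji}\varphi|\le Ch\|\nabla\varphi\|_{L^\infty(\Omega)}$, not from $f_{ij}$. The factor $\delta_{ji}\log v_h$ in $f_{ij}$ is controlled only by $C\|\nabla w_h\|_{L^2(T)}$ through the local inverse inequality, so in two dimensions it carries no power of $h$. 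Hence the leading part is merely bounded in $L^1(0,T)$ by \eqref{bound:log(1+uh)_I}, and nothing you have shows it tends to zero. The argument that works is a sign argument. Since $\beta_{ji}\ge0$, this part enters on the same side as the graph dissipation $-\sum_{i<j}\varphi_i\frac{\delta_{ji}^2u_h}{(1+u_i)(1+u_j)}(\nabla\varphi_{\a_j},\nabla\varphi_{\a_i})$, so it can be discarded, keeping only $\liminf\ge0$. This is legitimate precisely because Definition~\ref{def:gener_sol} is a one-sided (supersolution) inequality, the same mechanism as in your lower-semicontinuity step.

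A minor point: you never need a bound on $\nabla\log(1+u_h)$ itself, so work with $\nabla i_h\log(1+u_h)$ throughout. Passing from the graph dissipation to the non-interpolated function is not a routine inverse estimate on weakly acute meshes.
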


\begin{remark} A more detailed analysis of our algorithm when discretized using an Euler time-stepping scheme can certainly be developed, addressing properties such as positivity, nonnegativity, the discrete maximum principle, \emph{a priori} bounds, and the passage to the limit. However, including a time discretization would lead to an unnecessarily lengthy exposition, as the main analytical challenges arise from the spatial discretization, as will become clear. 
\end{remark}
\section{Technical preliminaries}
For later purposes it will be useful to collect some properties of the space $X_h$ constructed on $\mathcal{T}_h$ that  need not to be weakly acute.  
\begin{proposition}
Let $0 \le m \le \ell\le 1$ and $1\le p, q \le\infty$. Then, there is $C_{\rm inv}>0$, independent of $h$, $T$, $p$, and $q$, such that, for $x_h\in X_h$
\begin{equation}\label{inv_local:WlpToWmq}
\|x_h\|_{W^{\ell,p}(T)}\le C_{\rm inv} h^{m-\ell+2(\frac{1}{p}-\frac{1}{q})} \|x_h\|_{W^{m,q}(T)}
\end{equation}
and
\begin{equation}\label{inv_global: WlpToWmq}
\|x_h\|_{W^{\ell,p}(\Omega)}\le C_{\rm inv} h^{m-\ell+2\min\{0,\frac{1}{p}-\frac{1}{q})\}} \|x_h\|_{W^{m,q}(\Omega)}.
\end{equation}
\end{proposition}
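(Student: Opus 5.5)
The plan is the classical scaling argument: prove the local bound \eqref{inv_local:WlpToWmq} on a reference element, then sum over elements to obtain \eqref{inv_global: WlpToWmq}, using quasi-uniformity from (A2) to control the local mesh sizes.

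\emph{Reference element.} Fix the reference triangle $\hat T$ and the affine map $F_T(\hat\x)=B_T\hat\x+\boldsymbol{b}_T$ from $\hat T$ onto $T$. For $x_h\in X_h$ set $\hat x=x_h|_T\circ F_T\in\mathcal{P}_1(\hat T)$. Since $\mathcal{P}_1(\hat T)$ is finite dimensional, all norms on it are equivalent. This gives $\|\hat x\|_{W^{\ell,p}(\hat T)}\le \hat C\|\hat x\|_{W^{m,q}(\hat T)}$, with $\hat C$ depending only on $\hat T$. The constant is uniform in $p,q\in[1,\infty]$ because $\|\cdot\|_{L^p(\hat T)}$ lies between $|\hat T|^{1/p}\|\cdot\|_{L^\infty}$ and a fixed multiple of $\|\cdot\|_{L^1}$; comparing with $L^\infty$ and $L^1$ therefore removes any dependence on $p$ and $q$.

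\emph{Scaling back to $T$.} I will use the standard relations
\[
|x_h|_{W^{k,p}(T)}\simeq |\det B_T|^{1/p}\|B_T^{-1}\|^{k}|\hat x|_{W^{k,p}(\hat T)}, \qquad |\hat x|_{W^{k,q}(\hat T)}\simeq |\det B_T|^{-1/q}\|B_T\|^{k}|x_h|_{W^{k,q}(T)}.
\]
Shape regularity, implied by quasi-uniformity, gives $\|B_T\|\lesssim h_T$, $\|B_T^{-1}\|\lesssim h_T^{-1}$ and $|\det B_T|\simeq h_T^2$. Combining these yields $\|x_h\|_{W^{\ell,p}(T)}\le C h_T^{m-\ell+2(\frac1p-\frac1q)}\|x_h\|_{W^{m,q}(T)}$. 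Quasi-uniformity, $h_T\ge c\,h$, then lets me replace $h_T$ by $h$ whatever the sign of the exponent.

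One point needs care: the full norm mixes orders $0$ and $1$. In the worst case, $\ell=1$ and $m=0$, I bound the $L^p$ part trivially by the same power of $h$ times a constant, since $h\le\operatorname{diam}\Omega$ and $h^{-1}\gtrsim 1$. When $m=\ell$ the seminorm pieces scale identically, so the estimate follows directly.

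\emph{Global estimate.} For $p\ge q$ with $p<\infty$, I raise the local bound to the power $p$ and sum over $T\in\mathcal{T}_h$. I then use $\sum_T a_T^{p/q}\le(\sum_T a_T)^{p/q}$, valid since $p/q\ge1$, which gives exponent $m-\ell+2(\frac1p-\frac1q)$. The case $p=\infty$ reduces to taking the maximum over $T$.

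For $p<q$, I instead use Hölder's inequality on $\Omega$, $\|\cdot\|_{W^{\ell,p}(\Omega)}\le|\Omega|^{\frac1p-\frac1q}\|\cdot\|_{W^{\ell,q}(\Omega)}$, followed by the case $p=q$. This yields the exponent $m-\ell+2\min\{0,\frac1p-\frac1q\}$.

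The only delicate step is tracking that the constant stays independent of $p$ and $q$. That is handled by the $L^1$ and $L^\infty$ sandwich on $\hat T$, together with the fact that factors such as $|\det B_T|^{1/p}$ appear as exact powers. Everything else is routine.
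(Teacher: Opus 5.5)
Your proposal is correct and follows essentially the same route as the paper. The paper's proof simply cites Brenner--Scott (Lm.~4.5.3) and Ern--Guermond (Lm.~1.138), whose argument is exactly yours: norm equivalence on $\mathcal{P}_1(\hat T)$ and affine scaling for the local bound, then summation over elements using quasi-uniformity. Your $L^1$/$L^\infty$ sandwich on $\hat T$ also makes explicit the independence of the constant from $p$ and $q$, which the statement asserts but the citation leaves implicit.
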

\begin{proof}
See \cite[Lm. 4.5.3]{Brenner_Scott_2008} or \cite[Lm. 1.138]{Ern_Guermond_2004} for a proof.
\end{proof}

The following propositions concern some error bounds, and stability and commutator  properties for the interpolation operators $i_h$, $j_h$ and $q_h$ as defined above.  

\begin{proposition}
There exist three positive constants $C_{\rm err}$, $C_{\rm com}$ and $C_{\rm sta}$, independent of $h$, such that \begin{equation}\label{err_LinfW1inf:i_h}
\|x-i_h x\|_{L^\infty(\Omega)}\le C_{\rm err } h \|x\|_{W^{1,\infty}(\Omega)},
\end{equation}
\begin{equation}\label{err_L1W21:i_h}
\|x-i_h x\|_{L^1(\Omega)}\le C_{\rm err } h^2 \|x\|_{W^{2,1}(\Omega)}, 
\end{equation}
\begin{equation}\label{err_H1H2:i_h}
\|x-i_h x\|_{H^1(\Omega)}\le C_{\rm err } h \|x\|_{H^2(\Omega)},
\end{equation}
\begin{equation}\label{Com_err_L2H1:i_h}
\|x_h \overline x_h-i_h(x_h  \bar x_h)\|_{L^1(\Omega)}\le C_{\rm com} h\, \|x_h\|_{L^2(\Omega)} \, \|\nabla \overline x_h\|_{L^2(\Omega)},
\end{equation}
\begin{equation}\label{Com_err_H1'W1inf:i_h}
\|x_h\bar x_h-i_h(x_h\bar x_h)\|_{L^1(\Omega)}\le C_\mathrm{com} h^{\frac{1}{2}} \|x_h\|_{(H^1(\Omega)\cap L^\infty(\Omega))'} \|\nabla \bar x_h \|_{L^\infty(\Omega)},
\end{equation}
\begin{equation}\label{Stab:i_h}
\|x_h^n\|_{L^1(\Omega)}\le \|i_h(x^n_h)\|_{L^1(\Omega)}\le C_{\rm sta} \|x_h^n\|_{L^1(\Omega)}\quad\mbox{ for all }\quad n\in\mathds{N}\quad\mbox{ with }\quad x_h\ge0,
\end{equation}
and
\begin{equation}\label{Stab_Linf:ih}
\|i_h x\|_{W^{k,\infty}(\Omega)}\le \|x\|_{W^{k,\infty}(\Omega)}\quad\mbox{ for }\quad k=0,1.
\end{equation}
\end{proposition}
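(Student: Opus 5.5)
The estimates split into three groups, and I would prove each one locally on a single triangle $T\in\mathcal{T}_h$ and then sum. The groups are: the interpolation error bounds \eqref{err_LinfW1inf:i_h}, \eqref{err_L1W21:i_h} and \eqref{err_H1H2:i_h}; the commutator bounds \eqref{Com_err_L2H1:i_h} and \eqref{Com_err_H1'W1inf:i_h}; and the stability bounds \eqref{Stab:i_h} and \eqref{Stab_Linf:ih}.

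\emph{Error bounds.} I would use the Bramble--Hilbert lemma on the reference element. Pull back with the affine map and use that $i_h$ reproduces $\mathcal{P}_1$. Then push forward and use quasi-uniformity to control the Jacobians. For \eqref{err_L1W21:i_h} and \eqref{err_H1H2:i_h} the Sobolev embedding $W^{2,1}\subset C^0$ and $H^2\subset C^0$ in two dimensions ensures that $i_h$ is well defined, and these are exactly the statements in \cite{Brenner_Scott_2008}. For \eqref{err_LinfW1inf:i_h} the argument is even simpler. On $T$, $x-i_hx$ vanishes at the vertices, so $|x(\x)-i_hx(\x)|\le |x(\x)-x(\a)|+|i_hx(\a)-i_hx(\x)|\le 2h\|\nabla x\|_{L^\infty}$, using \eqref{Stab_Linf:ih}.

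\emph{Commutator bounds.} On $T$, the function $x_h\bar x_h$ is quadratic, and $i_h$ reproduces linears. Hence $(I-i_h)(x_h\bar x_h)$ depends only on the second derivatives $2\nabla x_h\otimes\nabla\bar x_h$, and I get
$$\|x_h\bar x_h-i_h(x_h\bar x_h)\|_{L^1(T)}\le Ch_T^2\|\nabla x_h\|_{L^2(T)}\|\nabla\bar x_h\|_{L^2(T)}.$$
Applying the inverse inequality \eqref{inv_local:WlpToWmq} to $\nabla x_h$ trades one power of $h$ for $\|x_h\|_{L^2(T)}$. Summing with Cauchy--Schwarz then gives \eqref{Com_err_L2H1:i_h}.

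For \eqref{Com_err_H1'W1inf:i_h} I would write the error in the basis-function form
$$x_h\bar x_h-i_h(x_h\bar x_h)=\sum_{i,j}x_i(\bar x_j-\bar x_i)\varphi_{\a_i}\varphi_{\a_j}.$$
This reduces the bound to $\sum_i|x_i|\,h\|\nabla\bar x_h\|_{L^\infty}\int\varphi_{\a_i}$. The difficulty is to control $\sum_i|x_i|\int\varphi_{\a_i}$ by a negative norm. I would test $x_h$ against $\psi_h=i_h\,\mathrm{sign}$-type functions, or use duality with the nodal values. Since $\psi_h$ has $\|\psi_h\|_{L^\infty}\le1$ and $\|\nabla\psi_h\|_{L^2}\lesssim h^{-1}$, I obtain $\|\psi_h\|_{H^1\cap L^\infty}\lesssim h^{-1}$. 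The claimed factor $h^{1/2}$ suggests that a more careful interpolation, between the $L^\infty$ bound and an $H^1$ bound of order $h^{-1/2}$ measured on a boundary layer, is needed. \textbf{This step is the main obstacle.} The first thing I would try is to combine the mass-lumping identity $(x_h,\psi_h)_h$ with \eqref{Com_err_L2H1:i_h} to move between the lumped and the exact products.

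\emph{Stability bounds.} For \eqref{Stab:i_h} with $x_h\ge0$, convexity of $s\mapsto s^n$ on each $T$ gives $x_h^n\le i_h(x_h^n)$ pointwise, which is the left inequality. For the right inequality, $\int_T i_h(x_h^n)=\frac{|T|}{3}\sum_{\text{vertices}}x_i^n$. Norm equivalence on the finite-dimensional space $\mathcal{P}_1(\hat T)$ then gives
$$\sum_{\text{vertices}} x_i^n\le \max_T x_h^n\le C\frac{1}{|T|}\|x_h\|^n_{L^n(T)}.$$
Here I must check that the constant is uniform in $n$. For this I would bound the maximum of $x_h^n$ by its average over the subtriangle near the maximizing vertex, on which $x_h\ge \max/2$. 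That argument gives a constant of order $2^n$, so uniformity in $n$ should be taken as depending on $n$ unless sharpened. Finally, \eqref{Stab_Linf:ih} follows because $i_hx$ on $T$ is a convex combination of vertex values, which gives $k=0$. Its gradient is determined by edge difference quotients of $x$, each bounded by $\|\nabla x\|_{L^\infty}$. For $k=1$, up to a shape constant, this uses the mean value theorem along the edges.
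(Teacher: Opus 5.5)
Where you give direct arguments, they are correct, and your two hedges are forced by the statement itself. The real problem is \eqref{Com_err_H1'W1inf:i_h}: you leave it open, and it cannot be closed because it is false as written.

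\textbf{What the paper does, and what you get right.} The paper proves nothing here itself. It cites \cite{Brenner_Scott_2008,Ern_Guermond_2004} for the interpolation errors, \cite{GG_GS_2019} and \cite{C_GS_RG_2020} for the commutators and \cite{GS_RG_2021} for \eqref{Stab:i_h}, plus a one-line remark for \eqref{Stab_Linf:ih}. Your element-wise arguments are correct for \eqref{err_LinfW1inf:i_h}--\eqref{err_H1H2:i_h}, including \eqref{err_L1W21:i_h}, which the paper never addresses. They are also correct for \eqref{Com_err_L2H1:i_h} (constant Hessian, local inverse estimate, Cauchy--Schwarz) and for both stability bounds.

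Both hedges are genuinely needed, not artefacts of your method:
\begin{itemize}
\item For \eqref{Stab:i_h}, take $x_h=\varphi_{\a}$. On every $T\ni\a$ one has $\int_T i_h(x_h^n)\,\dx=|T|/3$ but $\int_T x_h^n\,\dx=2|T|/((n+1)(n+2))$. So $C_{\rm sta}$ must grow with $n$; this is harmless for the $p$-th root argument behind \eqref{Stab_Linf:qh}.
\item For \eqref{Stab_Linf:ih} with $k=1$, take $|\x|$ on the equilateral triangle with vertices $0$, $(1,0)$, $(\frac12,\frac{\sqrt3}{2})$. It has unit gradient, but its interpolant has gradient of length $2/\sqrt3$, so a shape constant is unavoidable.
\end{itemize}

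\textbf{The gap.} No refinement of your argument, boundary-layer or otherwise, can give the factor $h^{1/2}$ in \eqref{Com_err_H1'W1inf:i_h}. Your own reduction already exhibits the correct order. Here is a counterexample.
\begin{itemize}
\item Take $\Omega=(0,1)^2$ with the uniform right-triangle mesh, which is weakly acute and quasi-uniform. Write $\x=(\xi_1,\xi_2)$, give $x_h$ the nodal value $(-1)^k$ on the $k$-th horizontal row of nodes, and set $\bar x_h(\x)=\xi_2$.
\item Then $x_h(\x)=z(\xi_2)$, where $z$ is a zigzag with zero mean on each strip $kh<\xi_2<(k+1)h$. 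A one-dimensional Poincar\'e inequality on each strip gives $|(x_h,\phi)|\le \frac{h}{\pi}\|x_h\|_{L^2(\Omega)}\|\partial_2\phi\|_{L^2(\Omega)}$, so $\|x_h\|_{(H^1(\Omega)\cap L^\infty(\Omega))'}\le Ch$.
\item Symmetrizing your identity gives $x_h\bar x_h-i_h(x_h\bar x_h)=-\sum_{i<j}\delta_{ji}x_h\,\delta_{ji}\bar x_h\,\varphi_{\a_i}\varphi_{\a_j}$.
\item In every triangle exactly two edges join adjacent rows, and both carry the same value $\delta_{ji}x_h\,\delta_{ji}\bar x_h=\pm 2h$. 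The third edge has $\delta_{ji}\bar x_h=0$.
\item Hence the left-hand side equals $h/3$, while the right-hand side is $O(h^{3/2})$.
\end{itemize}

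\textbf{What is true instead.} Test $\|x_h\|_{L^1(\Omega)}=(x_h,\operatorname{sign}x_h)$ against the $L^2$-projection $P_h\operatorname{sign}x_h$ rather than $i_h\operatorname{sign}x_h$. For the latter, the lumped and exact pairings differ by a nonnegative amount you cannot control. $P_h\operatorname{sign}x_h$ is $L^\infty$-stable on quasi-uniform meshes and has $H^1$ norm $O(h^{-1})$, and with it your reduction proves
\[
\|x_h\bar x_h-i_h(x_h\bar x_h)\|_{L^1(\Omega)}\le Ch\|x_h\|_{L^1(\Omega)}\|\nabla\bar x_h\|_{L^\infty(\Omega)}\le C\|x_h\|_{(H^1(\Omega)\cap L^\infty(\Omega))'}\|\nabla\bar x_h\|_{L^\infty(\Omega)}.
\]
The example above shows this bound is sharp. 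The paper relies on the factor $h^{1/2}$ in \eqref{sec4.5.4:lab1}, so this is a defect of the statement itself, not only of your proposal.
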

\begin{proof}
The interpolation errors \eqref{err_LinfW1inf:i_h} and \eqref{err_H1H2:i_h} can be found in \cite[Th. 4.4.20]{Brenner_Scott_2008} or \cite[Co.  1.109]{Ern_Guermond_2004}. The commutator properties \eqref{Com_err_L2H1:i_h} and \eqref{Com_err_H1'W1inf:i_h}  are proved in \cite{GG_GS_2019} and \cite{C_GS_RG_2020}, respectively. Finally, the stability property \eqref{Stab:i_h} is in  \cite{GS_RG_2021},  and \eqref{Stab_Linf:ih} is proved by using that the space $X_h$ is made of piecewise linear elements.  
\end{proof}

\begin{proposition} There exist two constants $C_{\rm err}>0$ and $C_{\rm sta}>0$, independent of $h$, such that, for  $p=1$ and  $\infty$,  
\begin{equation}\label{err_LpW1p:jh}
\| x- j_h x \|_{L^p (\Omega)}\le C_{\rm err}  h \| x \|_{W^{s(p),p}(\Omega)},\quad\mbox{ with }\quad s(1)=2\mbox{ and } s(\infty)=1.
\end{equation}
and
\begin{equation}\label{stab_Lp:jh}
\|j_h x\|_{L^p(\Omega)}\le C_{\rm sta} \| x \|_{L^p(\Omega)}.
\end{equation}
\end{proposition}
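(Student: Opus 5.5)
The plan is to verify the two properties of $j_h$ locally, triangle by triangle, and then sum. By definition \eqref{def:jh}, each nodal value $j_h x(\a_i)$ is the mean of $x$ over a single triangle $T_{\a_i}$ containing $\a_i$. I will use quasi-uniformity (A2) throughout, in two ways. First, every triangle $T$ satisfies $|T|\sim h^2$. Second, a given triangle can serve as $T_{\a_i}$ for at most a bounded number of nodes, namely its three vertices, and each node $\a_i$ has a bounded number of neighbouring triangles.

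\emph{Stability \eqref{stab_Lp:jh}.} For $p=\infty$ this is immediate. Each nodal value satisfies $|j_h x(\a_i)|\le \|x\|_{L^\infty(T_{\a_i})}$, and a $\mathcal{P}_1$ function attains its extrema at vertices, so $\|j_h x\|_{L^\infty(\Omega)}\le \|x\|_{L^\infty(\Omega)}$; in fact $C_{\rm sta}=1$ works.

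For $p=1$, I fix a triangle $T$ with vertices $\a_{i_1},\a_{i_2},\a_{i_3}$ and argue in three steps:
\begin{enumerate}
\item Since $\|\varphi_{\a_i}\|_{L^1(T)}=|T|/3$, we get $\|j_hx\|_{L^1(T)}\le \sum_{k}|j_hx(\a_{i_k})|\,|T|/3$.
\item Each term satisfies $|j_hx(\a_{i_k})|\le |T_{\a_{i_k}}|^{-1}\|x\|_{L^1(T_{\a_{i_k}})}$.
\item Quasi-uniformity gives $|T|/|T_{\a_{i_k}}|\le C$, because both triangles share the vertex $\a_{i_k}$.
\end{enumerate}
Summing over $T$, each $\|x\|_{L^1(T')}$ is counted only a bounded number of times, which yields $\|j_hx\|_{L^1(\Omega)}\le C\|x\|_{L^1(\Omega)}$.

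\emph{Error \eqref{err_LpW1p:jh}.} I will use a Bramble–Hilbert type argument on patches. Let $\omega_T$ be the union of $T$ and the triangles $T_{\a_i}$ for the vertices $\a_i$ of $T$. This patch is connected, since all of these triangles contain a vertex of $T$, and its diameter is at most $Ch$. The key structural fact is that $j_h$ reproduces constants. Hence, for any constant $c$,
\[
\|x-j_hx\|_{L^p(T)}\le \|x-c\|_{L^p(T)}+\|j_h(x-c)\|_{L^p(T)}\le C\|x-c\|_{L^p(\omega_T)},
\]
where the second inequality is the local version of the stability just proved.

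For $p=\infty$, I choose $c=x(\a)$ for some point of the patch. Since $x\in W^{1,\infty}$ is Lipschitz, this gives $\|x-c\|_{L^\infty(\omega_T)}\le Ch\|\nabla x\|_{L^\infty(\omega_T)}$. Taking the maximum over $T$ gives the claim with $s(\infty)=1$.

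For $p=1$, I take $c$ to be the mean of $x$ over $\omega_T$. The Poincaré inequality on the patch, with constant $Ch$ (uniform by quasi-uniformity and the finitely many patch shapes up to scaling), gives $\|x-c\|_{L^1(\omega_T)}\le Ch\|\nabla x\|_{L^1(\omega_T)}$. Summing over $T$ with bounded overlap yields $\|x-j_hx\|_{L^1(\Omega)}\le Ch\|x\|_{W^{1,1}(\Omega)}\le Ch\|x\|_{W^{2,1}(\Omega)}$. So the stated estimate with $s(1)=2$ holds, and in fact only $W^{1,1}$ regularity is needed.

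\emph{Main obstacle.} The only delicate point is the uniformity of the Poincaré constant on the patches $\omega_T$. These patches are possibly nonconvex unions of a few triangles meeting at vertices. If two triangles meet only at a single point, the union's interior is not connected, and a Poincaré inequality may fail there. I would avoid this by enlarging the patch to the union of all triangles sharing a vertex with $T$. This enlarged patch is a connected Lipschitz domain, its shape is controlled by shape regularity, and a reference-patch compactness argument (finitely many combinatorial configurations, angles bounded below) gives a uniform constant after scaling by $h$. Alternatively, one could cite \cite{Ern_Guermond_2004} for the standard Clément-type estimates.
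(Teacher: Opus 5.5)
Your proof is correct, but it takes a different route from the paper's.

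\textbf{The paper's route.} The paper never compares $x$ with a constant. It inserts the nodal interpolant, writing $x-j_hx=(x-i_hx)+(i_hx-j_hx)$, and bounds the first piece by the standard estimates \eqref{err_LinfW1inf:i_h} and \eqref{err_L1W21:i_h}. For the second piece it notes that $i_hx-j_hx\in X_h$ has nodal values $x(\a)-|T_{\a}|^{-1}\int_{T_{\a}}x\,\dx$. These are at most $h\|\nabla x\|_{L^\infty(\Omega)}$ for $p=\infty$, and for $p=1$ they are summed against $\int_\Omega\varphi_{\a}\,\dx$. Stability is declared immediate from the definition. This route needs point values of $x$, which is why the $L^1$ bound is stated for $W^{2,1}(\Omega)\hookrightarrow C^0(\overline\Omega)$. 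In exchange, it needs no Poincaré inequality on patches.

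\textbf{Your route.} Your Clément-type argument (reproduction of constants, local stability, Poincaré on a patch) uses no point values. It therefore gives the sharper bound $\|x-j_hx\|_{L^1(\Omega)}\le Ch\|x\|_{W^{1,1}(\Omega)}$. For $p=\infty$ it reduces to essentially the paper's estimate: $x\in W^{1,\infty}(\Omega)$ is continuous, so you can chain through the shared vertex inside convex triangles.

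\textbf{The patch issue.} You are right that the minimal patch $T\cup T_{\a_{i_1}}\cup T_{\a_{i_2}}\cup T_{\a_{i_3}}$ can have disconnected interior, so enlarging it is genuinely needed. One small correction: the enlarged patch need not be a Lipschitz domain, since its boundary can pinch at a vertex. This does not matter, because each vertex star is a fan of triangles joined through edges and contains $T$. Writing $\bar x_K$ for the mean of $x$ over a triangle $K$, you can then bound $|\bar x_{T_{\a_{i_k}}}-\bar x_T|$ by chaining means across a bounded number of shared edges. Each step costs $Ch^{-1}\|\nabla x\|_{L^1(K\cup K')}$, by a scaled $W^{1,1}$ trace inequality and Poincaré inequality on a single triangle. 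Multiplying by $|T|\sim h^2$ gives the $L^1$ estimate directly, with no reference-patch compactness argument.
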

\begin{proof} In proving  \eqref{err_LpW1p:jh} for $p=\infty$, we have, by \eqref{err_LinfW1inf:i_h},  that 
$$
\begin{array}{rcl}
\|x- j_h x \|_{L^\infty(\Omega)}&\le& \|x- i_h x\|_{L^\infty(\Omega)}+\|i _h x - j_h x \|_{L^\infty(\Omega)}
\\
&\le& C_{\rm err} h \|x \|_{W^{1,\infty}(\Omega)}+\|i _h x- j_h x\|_{L^\infty(\Omega)}. 
\end{array}
$$
Thus we obtain, for certain $\a\in\mathcal{N}_h$, that  
$$
\begin{array}{rcl}
\|i _h x- j_h x\|_{L^\infty(\Omega)}&=&\displaystyle
| i_h x(\a)-j_h x(\a)|= \left|x(\a)-\frac{1}{T_\a} \int_{T_\a} x(\x)\,\dx\right|
\\
&\le&\displaystyle
\frac{1}{T_\a} \int_{T_\a}|x(\a)-x(\x)|\,\dx\le h \|\nabla x\|_{L^\infty(T_\a)}
\\
&\le & h \|\nabla x\|_{L^\infty(\Omega)}.
\end{array}
$$
For $p=1$, the procedure is very similar, but using \eqref{err_L1W21:i_h} and 
$$
\|i_h x- j_h x\|_{L^1(\Omega)}=\sum_{\a\in\mathcal{N}_h} \left|x(\a)-\frac{1}{T_\a} \int_{T_\a} x(\x)\,\dx\right| \int_{\Delta_{\a}}\varphi_{\a}(\x)\dx
$$
together with the uniformity of  $\mathcal{K}_h$.
 
Inequality  \eqref{stab_Lp:jh} follows from the very definition. 
\end{proof}

\begin{proposition} There exist two constants $C_{\rm err}>0$ and $C_{\rm sta}>0$, independent of $h$, such that  
\begin{equation}\label{Error_L2-H1:qh}
\|x- q_h x\|_{L^2(\Omega)}+h \|\nabla( x- q_h x)\|_{L^2(\Omega)}\le C_{\rm err} h \|\nabla x\|_{L^2(\Omega)},
\end{equation}
\begin{equation}\label{Stab_Linf:qh}
\|q_h x\|_{L^\infty(\Omega)}\le C_{\rm sta} \|x\|_{L^\infty(\Omega)},
\end{equation}
\begin{equation}\label{Stab_H1:qh}
\|q_h x\|_{H^1(\Omega)}\le C_{\rm sta} \|x\|_{H^1(\Omega)}.
\end{equation}
\end{proposition}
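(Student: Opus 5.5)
We prove the three estimates for the discrete $L^2$-type projection $q_h$ defined in \eqref{def:qh}. The approach is to compare $q_h$ with the standard $L^2(\Omega)$-orthogonal projection $P_h$ onto $X_h$. Since $(\cdot,\cdot)_h$ and $(\cdot,\cdot)$ are uniformly equivalent on $X_h$ (mass lumping on shape-regular, quasi-uniform meshes), \eqref{def:qh} is uniquely solvable. Writing $q_h x=\sum_i (x,\varphi_{\a_i})/m_i\,\varphi_{\a_i}$ with $m_i=\int_\Omega\varphi_{\a_i}\,\dx$, we see that $q_h$ is a local averaging operator. Its nodal values are weighted means of $x$ over the patch $\Delta_{\a_i}$ with nonnegative weights $\varphi_{\a_i}/m_i$ integrating to one.

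First we prove \eqref{Stab_Linf:qh}. Directly from the formula, $|q_h x(\a_i)|\le \|x\|_{L^\infty(\Delta_{\a_i})}$, and a piecewise linear function attains its extrema at nodes. This gives the bound with $C_{\rm sta}=1$.

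Next we prove \eqref{Error_L2-H1:qh}. The weights have unit mass, so $q_h$ reproduces constants locally: $q_h c=c$ on every patch. For $x\in H^1(\Omega)$ and each $T\in\mathcal{T}_h$, let $\omega_T$ be the union of the patches of the vertices of $T$ and $\bar x_T$ the mean of $x$ over $\omega_T$. Then
$$
\|x-q_hx\|_{L^2(T)}\le \|x-\bar x_T\|_{L^2(T)}+\|q_h(x-\bar x_T)\|_{L^2(T)}.
$$
The local $L^2$-stability $\|q_h y\|_{L^2(T)}\le C\|y\|_{L^2(\omega_T)}$ follows from Cauchy--Schwarz on each nodal value together with $m_i\sim h^2$. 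Combining this with Poincaré's inequality on $\omega_T$ (shape regularity keeps the constant uniform) bounds the right-hand side by $Ch\|\nabla x\|_{L^2(\omega_T)}$. For the gradient we write $\nabla(q_h x)=\nabla q_h(x-\bar x_T)$ on $T$. The inverse inequality \eqref{inv_local:WlpToWmq} with $\ell=1,m=0,p=q=2$ then gives $\|\nabla q_hx\|_{L^2(T)}\le Ch^{-1}\|x-\bar x_T\|_{L^2(\omega_T)}\le C\|\nabla x\|_{L^2(\omega_T)}$, so $h\|\nabla(x-q_hx)\|_{L^2(T)}\le Ch\|\nabla x\|_{L^2(\omega_T)}$. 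Summing over $T$, where each element belongs to a uniformly bounded number of patches, yields \eqref{Error_L2-H1:qh}.

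Finally, \eqref{Stab_H1:qh} follows from the two local estimates just proved. We have $\|q_hx\|_{L^2}\le C\|x\|_{L^2}$, and the gradient bound gives $\|\nabla q_hx\|_{L^2}\le C\|\nabla x\|_{L^2}$.

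The main obstacle is the global equivalence and locality argument in the second step, namely verifying uniformly in $h$ that $q_h$ preserves constants and is locally $L^2$-stable; quasi-uniformity (A2) and the explicit lumped formula settle it. If a non-lumped variant were intended, we would instead invoke the known $H^1$-stability of the $L^2$-projection on quasi-uniform meshes and a perturbation argument using \eqref{Com_err_L2H1:i_h}.
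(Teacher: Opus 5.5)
Your proof is correct, but it takes a different route from the paper.

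The paper does not prove \eqref{Error_L2-H1:qh}; it cites Barrett--Blowey for it. For \eqref{Stab_Linf:qh} it uses a duality argument. It tests \eqref{def:qh} with $i_h(q_hx)^{p-1}$ for even $p$ and invokes the $L^1$-stability \eqref{Stab:i_h} of nodal interpolation of powers. This gives $\|q_hx\|_{L^p(\Omega)}\le C\|x\|_{L^\infty(\Omega)}|\Omega|^{1/p}$, and the paper then lets $p\to\infty$. Finally, \eqref{Stab_H1:qh} is read off from \eqref{Error_L2-H1:qh} by the triangle inequality, essentially as you do.

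You instead observe that the lumped product has a diagonal Gram matrix. This yields the explicit formula $q_hx(\a_i)=(x,\varphi_{\a_i})/\int_\Omega\varphi_{\a_i}\,\dx$, so $q_h$ is a positive, constant-reproducing, local averaging operator of Cl\'ement type. The three estimates then follow from a convex-combination bound at the nodes and from the standard patch argument: local $L^2$-stability, Poincar\'e on $\omega_T$, an inverse estimate, and finite overlap.

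Your route gives a self-contained, elementary proof with the sharp constant $1$ in \eqref{Stab_Linf:qh}, and its local estimates actually need only shape regularity, not quasi-uniformity. It also avoids two delicate points in the paper's $p\to\infty$ argument. That argument needs the constant in \eqref{Stab:i_h} to be independent of the power. It also applies that bound, which is stated for nonnegative functions, to the odd power $(q_hx)^{p-1}$, which may change sign. The paper's route gains only brevity, by citing the error estimate from the literature.

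Your closing remark about a non-lumped variant is unnecessary, since \eqref{def:qh} is defined with the lumped product.
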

\begin{proof} Assertion \eqref{Error_L2-H1:qh} is demonstrated in  \cite{Barrett_Blowey_1999, Barrett_Blowey_Garcke_1999}.

Let us take $x_h=i_h q_h^{p-1} x$ in \eqref{def:qh}, with $p$ even,  and use \eqref{Stab:i_h} to get
$$
\begin{array}{rcl}
\|i_h q^p_h x\|_{L^1(\Omega)}&\le& \|x\|_{L^\infty(\Omega)} \|i_h q_h^{p-1} x\|_{L^1(\Omega)}\le C \|x\|_{L^\infty(\Omega)} \|q^{p-1}_h x\|_{L^1(\Omega)}
\\
& \le & C \|x\|_{L^\infty(\Omega)} |\Omega|^\frac{1}{p} \|q^{p}_h x\|^{\frac{p-1}{p}}_{L^1(\Omega)}
\end{array}
$$
and hence
$$
\|q^p_h x\|_{L^p(\Omega)}\le C \|x\|_{L^\infty(\Omega)}  |\Omega|^\frac{1}{p}.
$$
Taking $p\to\infty$ leads to \eqref{Stab_Linf:qh}. Statement \eqref{Stab_H1:qh} follows straightforwardly from \eqref{Error_L2-H1:qh}.
\end{proof}

For a proof of the following proposition, see \cite{Scott_Zhang_1990, Girault_Lions_2001}. 
\begin{proposition} There exists an (average) interpolation operator  $sz_h:L^1(\Omega) \to X_h$ such that
\begin{equation}\label{sta:sz_h}
\|sz_h x\|_{W^{s,p}(\Omega)}\le C_{\rm sta} \| x \|_{W^{s,p}(\Omega)}\quad \mbox{for }  s=0,1\mbox{ and } 1\le p\le\infty,
\end{equation}
and
\begin{equation}\label{error:sz_h}
\|sz_h x- x \|_{W^{s,p} (\Omega)}\le C_{\rm app}  h^{m-s} \|x\|_{W^{m,\infty}(\Omega)}\quad \mbox{for } 0\le s\le m\le 2.
\end{equation}
\end{proposition}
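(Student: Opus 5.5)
The statement is the Scott--Zhang proposition. The paper defers it to \cite{Scott_Zhang_1990, Girault_Lions_2001}, but the standard construction gives a direct proof.

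\emph{Construction.} For each node $\a_i\in\mathcal{N}_h$ fix an edge (or triangle) $\sigma_i$ containing $\a_i$, and let $\{\psi_{ij}\}_j$ be the $L^2(\sigma_i)$-dual basis to the restrictions $\varphi_{\a_j}|_{\sigma_i}$. Set
$$
sz_h x=\sum_{i\in I}\Big(\int_{\sigma_i}\psi_{ii}\,x\Big)\varphi_{\a_i}.
$$
For $x\in L^1(\Omega)$ one should take $\sigma_i$ to be a triangle, since traces on edges are not defined there. For $W^{1,p}$ data edges may also be used. This choice is harmless, because we may always take the triangle version, as in \cite{Girault_Lions_2001}.

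\emph{Projection property.} By the duality, $sz_h x_h=x_h$ for every $x_h\in X_h$. In particular $sz_h$ reproduces $\mathcal{P}_1$ locally: if $x|_{\Delta_T}\in\mathcal{P}_1$, where $\Delta_T$ is the patch of triangles meeting $T$, then $sz_h x=x$ on $T$.

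\emph{Local stability.} A scaling argument on the reference element, using quasi-uniformity from (A2), gives
$$
\|\psi_{ii}\|_{L^\infty(\sigma_i)}\le C|\sigma_i|^{-1}.
$$
Hence
$$
\|sz_h x\|_{L^p(T)}\le C\|x\|_{L^p(\Delta_T)}.
$$
For the gradient, let $p_T\in\mathcal{P}_1$ be the average Taylor polynomial of $x$ on $\Delta_T$. Then
$$
\|\nabla sz_h x\|_{L^p(T)}=\|\nabla sz_h(x-p_T)\|_{L^p(T)}\le Ch^{-1}\|x-p_T\|_{L^p(\Delta_T)}\le C\|\nabla x\|_{L^p(\Delta_T)}.
$$
The middle inequality is the inverse estimate \eqref{inv_local:WlpToWmq} combined with the $L^p$ stability just shown, and the last is Bramble--Hilbert. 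Summing over $T$, with each point covered boundedly often by the patches, gives \eqref{sta:sz_h}; for $p=\infty$ one takes the maximum over $T$ instead of summing.

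\emph{Error estimate.} Write
$$
x-sz_h x=(x-p_T)-sz_h(x-p_T)\quad\text{on } T.
$$
Local stability and the Bramble--Hilbert lemma then give
$$
\|x-sz_h x\|_{W^{s,p}(T)}\le Ch^{m-s}|x|_{W^{m,p}(\Delta_T)}.
$$
Bounding $|x|_{W^{m,p}(\Delta_T)}\le C h^{2/p}|x|_{W^{m,\infty}(\Delta_T)}$ and summing over the roughly $h^{-2}$ triangles yields \eqref{error:sz_h} with the $W^{m,\infty}$ norm on the right.

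\emph{Main obstacle.} The delicate point is the uniform bound on the dual functions $\psi_{ii}$ near the boundary and with respect to the choice of $\sigma_i$. Both follow from the affine-equivalence and scaling argument under the shape regularity in (A2).
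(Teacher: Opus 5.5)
Your sketch is correct for $s\in\{0,1\}$. It is exactly the argument behind the references the paper cites in place of a proof, so there is no competing route to compare. Those ingredients are: element-based Scott--Zhang functionals (which is what makes $sz_h$ defined on $L^1(\Omega)$), local reproduction of $\mathcal{P}_1$, scaling, and Poincar\'e/Bramble--Hilbert estimates on the patches $\Delta_T$.

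Two small repairs are needed.

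\textbf{Gradient bound.} Take $p_T$ to be the constant $|\Delta_T|^{-1}\int_{\Delta_T}x\,\dx$. For a nonconstant $p_T\in\mathcal{P}_1$ one has $\nabla sz_h x=\nabla sz_h(x-p_T)+\nabla p_T$ on $T$, so your displayed equality fails and the extra term would need its own estimate.

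\textbf{Range of $s$.} Your summation step proves \eqref{error:sz_h} only for $s\in\{0,1\}$. For $s=m=2$ the inequality cannot hold in the global $W^{2,p}(\Omega)$ norm. A non-affine $sz_h x\in X_h$ has distributional second derivatives concentrated on the interior edges where its gradient jumps, so $sz_h x-x\notin W^{2,p}(\Omega)$. Summing your local bounds yields only the broken, elementwise norm. Only $s\le 1$ is used later in the paper, so this does not affect the rest of the argument.

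Finally, the ``main obstacle'' you name is not really one. With elements as $\sigma_i$, $\|\psi_{ii}\|_{L^\infty(\sigma_i)}=\hat C|\sigma_i|^{-1}$ is an exact affine scaling identity, with $\hat C$ depending only on the reference element and no boundary effect. What genuinely uses shape regularity is twofold: the uniformity of the Poincar\'e/Bramble--Hilbert constants on the patches $\Delta_T$, which need not be star-shaped, and the comparability of neighbouring element sizes.
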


The following propositions are borrowed from \cite{Barrenechea_Burman_Karakatsani_2017} and \cite{Bonilla_GS_2024}, respectively.
\begin{proposition} Let $\{a_n\}_{n=0}^N$ and $\{b_n\}_{n=0}^N$ two sequence of real numbers. Then  
\begin{equation}\label{ineq:sum}
\Big|\dfrac{\sum_{n=0}^N a_n}{\sum_{n=0}^N|a_n|}- \dfrac{\sum_{n=0}^N b_n}{\sum_{n=0}^N|b_n|}\Big|\le 2 \frac{\sum_{n=0}^N | a_n-b_n|}{\sum_{n=0}^N|a_n|}.
\end{equation}
\end{proposition}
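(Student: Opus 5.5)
Set $A=\sum_{n}|a_n|$ and $B=\sum_n|b_n|$, and assume $A>0$ and $B>0$ so that both quotients make sense. The plan is to add and subtract the mixed quotient $\frac{\sum_n b_n}{A}$, which splits the left-hand side of \eqref{ineq:sum} into a difference of numerators over the common denominator $A$ and a difference of denominators.

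For the first piece,
$$
\Big|\frac{\sum_n a_n}{A}-\frac{\sum_n b_n}{A}\Big|=\frac{|\sum_n (a_n-b_n)|}{A}\le \frac{\sum_n|a_n-b_n|}{A}.
$$
For the second piece, write
$$
\Big|\frac{\sum_n b_n}{A}-\frac{\sum_n b_n}{B}\Big|=\Big|\sum_n b_n\Big|\,\frac{|B-A|}{AB}\le \frac{|B-A|}{A},
$$
using $|\sum_n b_n|\le B$. By the reverse triangle inequality applied termwise,
$$
|B-A|\le\sum_n\big||b_n|-|a_n|\big|\le \sum_n|a_n-b_n|.
$$
Hence the second piece is also bounded by $\frac{\sum_n|a_n-b_n|}{A}$.

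Adding the two bounds gives the constant $2$ in \eqref{ineq:sum}.

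The only delicate point is the degenerate case $B=0$, where the second quotient is undefined. If one adopts the convention that it equals $0$ (as in the definition of $\alpha_{\a_i}$, which is set to $0$ when the denominator vanishes), then $b_n=0$ for all $n$. The left-hand side becomes $\frac{|\sum_n a_n|}{A}\le 1$, while the right-hand side equals $2\frac{A}{A}=2$, so the inequality still holds. The case $A=0$ is excluded, since the right-hand side is then undefined. No step presents a real obstacle; this is an elementary triangle-inequality argument.
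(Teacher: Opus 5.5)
Your proof is correct and complete. Adding and subtracting $\sum_n b_n/A$, then using $|\sum_n b_n|\le B$ and $|B-A|\le\sum_n|a_n-b_n|$, gives each half of the constant $2$, and your check of the degenerate case $B=0$ (with the zero-quotient convention used for the shock detectors) is a sensible addition. The paper itself gives no argument here: it simply imports the inequality from the work of Barrenechea, Burman and Karakatsani, and your add-and-subtract estimate is the standard elementary way to establish it.
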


\begin{proposition} It follows that, for all $x, y>0$, 
\begin{equation}\label{ineq:log}
(\log(x)-\log(y))^2\le \frac{(x-y)^2}{xy}
\end{equation}
and
\begin{equation}\label{ineq:log_II}
|\frac{y}{1+y}-\frac{x}{1+x}|\le |\log(1+x)-\log(1+y)|
\end{equation}
hold.
\end{proposition}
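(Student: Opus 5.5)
The two inequalities are elementary, so the plan is to reduce each to a one-variable calculus fact, using the symmetry in $x$ and $y$.

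\emph{Proof of \eqref{ineq:log}.} Both sides are symmetric in $(x,y)$ and vanish when $x=y$, so we may assume $x>y>0$. Set $s=\sqrt{x/y}>1$. Then
$$
(\log x-\log y)^2=(2\log s)^2
\quad\mbox{and}\quad
\frac{(x-y)^2}{xy}=\Big(\sqrt{\tfrac{x}{y}}-\sqrt{\tfrac{y}{x}}\Big)^2=\Big(s-\frac1s\Big)^2 .
$$
Both $2\log s$ and $s-\frac1s$ are positive for $s>1$. Hence it suffices to show $g(s)\ge 0$ for $s\ge1$, where
$$
g(s)=s-\frac1s-2\log s .
$$
We have $g(1)=0$ and
$$
g'(s)=1+\frac1{s^2}-\frac2s=\Big(1-\frac1s\Big)^2\ge 0 ,
$$
so $g$ is nondecreasing and $g(s)\ge0$ on $[1,\infty)$. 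Squaring $2\log s\le s-\frac1s$ gives \eqref{ineq:log}.

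\emph{Proof of \eqref{ineq:log_II}.} Write $\frac{y}{1+y}=1-\frac{1}{1+y}$, and similarly for $x$. This gives
$$
\Big|\frac{y}{1+y}-\frac{x}{1+x}\Big|=\Big|\frac1{1+x}-\frac1{1+y}\Big|=\frac{|x-y|}{(1+x)(1+y)} .
$$
For the right-hand side we use the integral representation
$$
|\log(1+x)-\log(1+y)|=\Big|\int_{1+y}^{1+x}\frac{{\rm d}\sigma}{\sigma}\Big|\ge \frac{|x-y|}{\max\{1+x,1+y\}} .
$$
Since $\min\{1+x,1+y\}\ge1$, we have $\max\{1+x,1+y\}\le(1+x)(1+y)$, and \eqref{ineq:log_II} follows. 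This argument also covers $x,y\ge0$.

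Neither step is a genuine obstacle. The only point needing care is the substitution $s=\sqrt{x/y}$ in \eqref{ineq:log}, which turns the right-hand side into a perfect square so that the derivative of $g$ is also a perfect square.
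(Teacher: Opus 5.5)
Your proof is correct. Its skeleton matches the paper's: reduce \eqref{ineq:log} to a one-variable inequality in the ratio $x/y$, and obtain \eqref{ineq:log_II} by an integral comparison. The real difference lies in the auxiliary function for \eqref{ineq:log}.

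The paper takes $z=x/y$ and $g(z)=(z-1)^2-z\log^2 z$, and simply asserts that $g$ attains its unique global minimum $0$ at $z=1$. That assertion is the entire content of the inequality, and it is not self-evident: $g'(1)=g''(1)=g'''(1)=0$, so it needs a global argument. One such argument is $g''(z)=\frac{2}{z}(z-1-\log z)\ge 0$, which makes $g$ convex with a critical point at $z=1$.

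Your substitution $s=\sqrt{x/y}$ takes the square root before differentiating, so the claim becomes $2\log s\le s-\frac1s$, where the difference has derivative $(1-\frac1s)^2$. A single monotonicity step then finishes the proof, so your argument is complete where the paper's is only sketched.

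For \eqref{ineq:log_II}, the paper integrates the pointwise bound $\frac{1}{(1+\sigma)^2}\le\frac{1}{1+\sigma}$. You instead evaluate the left-hand side exactly as $\frac{|x-y|}{(1+x)(1+y)}$ and bound $\int\frac{{\rm d}\sigma}{\sigma}$ from below by the minimum of the integrand. These are the same idea. Both versions remain valid for $x,y\ge 0$, which is the range actually needed when the inequality is applied to the nonnegative nodal values of $u_h$.
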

\begin{proof} Let $g(z)= (z-1)^2-z \log^2 z$.  We know that $g(z)\ge 0$ for $z>0$ since $g$ attains its unique global minimum $0$ at $z=1$. Therefore, we deduce that  $\log^2 z\le \frac{(z-1)^2}{z}$ for all $z>0$. Taking $z=\frac{x}{y}$ completes the proof of \eqref{ineq:log}.

In proving \eqref{ineq:log_II}, we note that $0\le f'(x)\le h'(x)$ for $x>0$ with $f(x)=\frac{x}{1+x}$ and  $h(x)=\log(1+x)$. Then, for all $x\le y$, we have 
$$
f(y)-f(x)=\int_x^y f'(s)\,{\rm d}s\le \int_x^y h'(s)\,{\rm d}s=h(y)-h(x). 
$$
\end{proof}

A Moser--Trudinger's inequality will be required on polygonal domains \cite{Bonilla_GS_2024}.  

\begin{theorem}  Let $x_h\in X_h$ with $x_h>0$. Then there exists a constant $C_\Omega>0$, independent of $h$, such that
\begin{equation}\label{Moser-Trudinger}
\int_\Omega i_h e^{x_h(\x)}\,\dx\le C_\Omega (1+ \|\nabla x_h\|^2_{L^2(\Omega)}) e^{\displaystyle C_\Omega \Big( \|\nabla x_h\|^2_{L^2(\Omega)}+\|x_h\|_{L^1(\Omega)}\Big)}.
\end{equation}
\end{theorem}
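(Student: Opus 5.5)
The plan is to reduce the discrete inequality \eqref{Moser-Trudinger} to the classical Moser--Trudinger (Trudinger) inequality for $H^1$ functions on the Lipschitz (here polygonal) domain $\Omega$. The only discrete ingredient will be a pointwise comparison between $i_h e^{x_h}$ and $e^{x_h}$ on each element, controlled through the local inverse inequality \eqref{inv_local:WlpToWmq}. I will not use weak acuteness; only quasi-uniformity from (A2) enters.

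\emph{Step 1 (elementwise comparison).} Fix $T\in\mathcal{T}_h$ and $\x\in T$. Since $i_h e^{x_h}|_T$ is a convex combination of the nodal values $e^{x_h(\a)}$, $\a$ a vertex of $T$, we have
$$
i_h e^{x_h}(\x)\le \max_{\a\in T} e^{x_h(\a)} .
$$
Because $x_h|_T$ is affine, $|x_h(\a)-x_h(\x)|\le h_T|\nabla x_h|_T|$, and hence $i_h e^{x_h}(\x)\le e^{x_h(\x)}e^{h_T|\nabla x_h|_T|}$. Applying \eqref{inv_local:WlpToWmq} with $\ell=m=1$, $p=\infty$, $q=2$ gives $|\nabla x_h|_T|\le C_{\rm inv}h^{-1}\|\nabla x_h\|_{L^2(T)}$. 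By quasi-uniformity $h_T\le h$, so
$$
h_T|\nabla x_h|_T|\le C\|\nabla x_h\|_{L^2(\Omega)} .
$$
Integrating over all elements then yields
$$
\int_\Omega i_h e^{x_h}\,\dx\le e^{C\|\nabla x_h\|_{L^2(\Omega)}}\int_\Omega e^{x_h}\,\dx .
$$

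\emph{Step 2 (continuous inequality).} Apply the Moser--Trudinger inequality on bounded Lipschitz planar domains (Chang--Yang type) to $w=x_h\in H^1(\Omega)$:
$$
\int_\Omega e^{|w-\bar w|}\,\dx\le C\,e^{C\|\nabla w\|^2_{L^2(\Omega)}},\qquad \bar w=|\Omega|^{-1}\int_\Omega w\,\dx .
$$
Since $x_h>0$, we have $\bar w\le |\Omega|^{-1}\|x_h\|_{L^1(\Omega)}$ and $e^{x_h}\le e^{|x_h-\bar x_h|}e^{\bar x_h}$. This gives
$$
\int_\Omega e^{x_h}\,\dx\le C\,e^{C(\|\nabla x_h\|^2_{L^2(\Omega)}+\|x_h\|_{L^1(\Omega)})} .
$$

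\emph{Step 3 (absorbing the linear term).} Use Young's inequality $C\|\nabla x_h\|_{L^2(\Omega)}\le C^2/4+\|\nabla x_h\|^2_{L^2(\Omega)}$ to absorb the factor from Step 1 into the exponential and into the constant. The prefactor $(1+\|\nabla x_h\|^2_{L^2(\Omega)})\ge1$ in \eqref{Moser-Trudinger} is then harmless, so the claim follows after enlarging $C_\Omega$.

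The main obstacle is Step 1: bounding $i_h e^{x_h}$ by $e^{x_h}$ uniformly in $h$. The oscillation of $x_h$ over an element must be bounded independently of $h$, which works only because in two dimensions the $L^\infty$--$L^2$ inverse inequality for gradients costs exactly $h^{-1}$, cancelling the factor $h_T$. A secondary point is to check that the cited continuous Moser--Trudinger inequality holds on Lipschitz (polygonal) domains with the mean-value normalization; if only a version for $H^1_0$ is available, I would first extend $x_h$ by a bounded $H^1$ extension operator to a ball.
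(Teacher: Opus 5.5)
Your argument is correct. The paper itself contains no proof of this statement. It is quoted from \cite{Bonilla_GS_2024} and used as a black box, so the only available comparison is with the route suggested by the form of the bound.

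The prefactor $(1+\|\nabla x_h\|^2_{L^2(\Omega)})$ looks like the trace of an interpolation-error argument in place of your Step~1. On each $T$ one applies $\|f-i_hf\|_{L^1(T)}\le Ch_T^2|f|_{W^{2,1}(T)}$ to $f=e^{x_h}$, whose Hessian on $T$ is $e^{x_h}\nabla x_h\otimes\nabla x_h$. This gives
\[
\int_\Omega i_h e^{x_h}\,\dx\le\Big(1+C\|\nabla x_h\|^2_{L^2(\Omega)}\Big)\int_\Omega e^{x_h}\,\dx ,
\]
after which the continuous Trudinger inequality is applied exactly as in your Step~2.

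Your pointwise oscillation bound is more elementary: it uses only convexity of barycentric interpolation and no second derivatives. It loses an exponential factor $e^{C\|\nabla x_h\|_{L^2(\Omega)}}$ rather than a polynomial one. Since the right-hand side already carries $e^{C\|\nabla x_h\|^2_{L^2(\Omega)}}$, your Young absorption makes that difference immaterial. In either route the polynomial prefactor could itself be absorbed via $1+s\le e^s$. Both versions get their $h$-independence from the same two-dimensional scaling $h_T|\nabla x_h|_T|\le C\|\nabla x_h\|_{L^2(T)}$.

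Three small points of precision:

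\begin{enumerate}
\item[(i)] The inverse inequality \eqref{inv_local:WlpToWmq} is stated for full $W^{1,p}$ norms, so quoting it literally would bring in $\|x_h\|_{L^2(T)}$. What you need is the seminorm version, which is immediate because $\nabla x_h|_T$ is constant: $h_T|\nabla x_h|_T|=h_T|T|^{-1/2}\|\nabla x_h\|_{L^2(T)}$, and $h_T|T|^{-1/2}$ is bounded by shape regularity alone. Also, $h_T\le h$ is just the definition of $h$, not a consequence of quasi-uniformity.
\item[(ii)] The hypothesis $x_h>0$ is not needed, since the mean of $x_h$ is bounded by $|\Omega|^{-1}\|x_h\|_{L^1(\Omega)}$ whatever its sign.
\item[(iii)] The mean-value form of Trudinger's inequality, and your extension fallback applied to $x_h$ minus its mean, rely on Poincar\'e--Wirtinger. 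That requires $\Omega$ to be connected, which (A1) guarantees.
\end{enumerate}
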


\section{Proof of Theorem \ref{Th:main}}
The proof of existence of generalized solutions, as stated in Theorem \ref{Th:main}, has been divided into five parts: analysis of solvability, obtainment of lower and upper bounds, derivation of \emph{a priori} estimates, establishment of weak and strong convergences, and finally, passage to the limit.   

\subsection{Solvability} 
 Since time has been kept continuous, system \eqref{eq:u_h}-\eqref{eq:v_h} is nothing more than a system of ordinary differential equations. Therefore the existence and uniqueness of discrete solutions can be inferred from Cauchy--Lindelöf's theorem. 

\begin{lemma} There exists a unique pair solution $(u_h, v_h)\in C^1([0,\infty); X_h)$ to \eqref{eq:u_h}-\eqref{eq:v_h}.   
\end{lemma}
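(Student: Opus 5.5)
We plan to recast \eqref{eq:u_h}--\eqref{eq:v_h} as an autonomous system of ordinary differential equations for the nodal coefficient vectors $U(t)=(u_i(t))_{i\in I}$ and $V(t)=(v_i(t))_{i\in I}$. Since the lumped mass matrix $M_h=\mathrm{diag}\big(\int_\Omega\varphi_{\a_i}\,\dx\big)$ is diagonal with positive entries, the system can be written as $M_h U'=F(U,V)$ and $M_h V'=G(U,V)$. Here $F$ collects the stiffness term, the modified cross-diffusion term \eqref{New_KS_discretization} and the stabilization \eqref{Bu}, while $G$ collects the stiffness term and the lumped reaction $u_iv_i$. Because $M_h$ is invertible, $(U',V')=M_h^{-1}(F,G)$, and the task reduces to applying the Cauchy--Lipschitz (Cauchy--Lindelöf) theorem and then extending the local solution to $[0,\infty)$.

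\textbf{Local existence and uniqueness.} We will work on the open set $\mathcal{D}=\{(U,V): u_i>-1,\ v_i>0 \ \forall i\}$, where $\log v_i$, $\log(1+u_i)$ and $\frac{1}{1+u_i}$ are smooth, and check that $F$ is locally Lipschitz there.
\begin{enumerate}
\item The terms $G$ and the stiffness part of $F$ are polynomial, hence locally Lipschitz.
\item For $\tau_{ji}$ in \eqref{def:tau_ij}, note that $\delta_{ij}\frac{1}{1+x_h}=\frac{\delta_{ji}x_h}{(1+x_i)(1+x_j)}$. So $\tau_{ji}$ equals $\Lambda_{ij}\,(1+x_i)(1+x_j)$ times the divided difference of $\log(1+\cdot)$ at $x_i,x_j$. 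This divided difference extends smoothly to $x_i=x_j$, so $\tau_{ji}$ is smooth on $\mathcal{D}$. Its diagonal value is $\frac{x_i}{1+x_i}(1+x_i)^2\frac{1}{1+x_i}=x_i$, which matches the definition.
\item The bracket $\frac{x_i-\tau_{ji}}{\delta_{ji}x_h}$ in $f_{ij}$ is likewise a smooth divided-difference expression, so $f_{ij}$ is locally Lipschitz.
\end{enumerate}

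\textbf{Main obstacle: the stabilization term.} The shock detectors $\alpha_{\a_i}$ contain $[\cdot]_+$, absolute values and a quotient whose denominator can vanish. On the set where the denominator is nonzero, $\alpha_{\a_i}$ is locally Lipschitz for $q\ge1$: use \eqref{ineq:sum} with the Lipschitz maps $t\mapsto[t]_+$, $t\mapsto|t|$ and $t\mapsto t^q$ on $[0,1]$. Near points where the denominator vanishes, all differences $x_j-x_i$ and $x_j^{\rm sym}-x_i$ vanish, so $\delta_{ji}\tilde x_h=0$. The product $\beta_{ji}\,\delta_{ji}u_h\,\delta_{ji}\bar u_h$ therefore stays Lipschitz, because $\beta_{ji}$ is bounded by Lemma~\ref{lm:alpha_i} and multiplies a factor vanishing linearly. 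If this local Lipschitz argument turns out to be too delicate, we will fall back on Peano's theorem for existence, since the right-hand side is continuous. Uniqueness then comes from a monotonicity estimate on the difference of two solutions, using the symmetric positive-semidefinite structure of $B$. With either route, we obtain a unique maximal solution on $[0,T^*)$.

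\textbf{Global extension.} We show $T^*=\infty$ by proving that the solution stays in a compact subset of $\mathcal{D}$ on every bounded interval. This uses the bounds proved later, namely positivity of $u_h$ and the discrete maximum principle for $v_h$, via a standard continuation argument.
\begin{enumerate}
\item Testing \eqref{eq:v_h} with $\varphi_{\a_i}$ at a nodal minimum or maximum, weak acuteness gives $0<v_i(t)\le\|v_{0h}\|_{L^\infty}$. The lower bound is $v_i\ge\min_j v_j(0)\,e^{-\int_0^t\max_j u_j\,\ds}$, which stays positive as long as $u_h$ is bounded.
\item At a nodal minimum of $u_h$, the choice of $\beta_{ji}$ with $\alpha=1$ makes the stabilization dominate the cross-diffusion, so $u_i\ge0$.
\item Testing with $\bar u_h=1$ conserves mass, since $B1=0$ and the $\delta$-structure vanishes. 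Combined with $u_h\ge0$, this bounds $\|u_h\|_{L^1}$, and hence all nodal values, by $h$-dependent constants.
\end{enumerate}
Together these bounds keep $(U,V)$ in a compact subset of $\mathcal{D}$ on bounded intervals, which rules out blow-up and gives $T^*=\infty$.
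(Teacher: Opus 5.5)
Your overall route is the paper's: an ODE for the nodal values, local Lipschitz continuity of the right-hand side, and continuation via $u_h\ge0$, the discrete maximum principle for $v_h$ and mass conservation. The paper defers this last part to the remark after the lemma. Your divided-difference identity for $\tau_{ji}$ is a cleaner substitute for the paper's estimates on $g$.

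The gap is in the step you yourself single out, the stabilization term, where two of your claims fail as stated.

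First, $f_{ij}$ is \emph{not} locally Lipschitz. The bracket $(x_i-\tau_{ji}(x_h))/\delta_{ji}x_h$ does extend smoothly across $\{x_i=x_j\}$. But expanding gives $\tau_{ji}(x_h)=x_i+\tfrac12(x_j-x_i)+O(|x_j-x_i|^2)$, so the extension equals $-\tfrac12$ there, whereas the definition sets $f_{ij}=0$ on that hyperplane. Hence $f_{ij}$, and with it $\beta_{ji}$, jumps across it whenever $v_i\ne v_j$.

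Second, ``$\beta_{ji}$ is bounded and multiplies a factor vanishing linearly'' does not imply Lipschitz continuity: $x\sin(1/x)$ is a counterexample. Moreover, near (rather than on) the set where the denominator of $\alpha_{\a_i}$ vanishes, your local Lipschitz constant for $\alpha_{\a_i}$ degenerates. So your two local arguments do not glue together without a quantitative link.

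The missing link is that the Lipschitz constant of $\beta_{ji}$ blows up no faster than $1/|u_j-u_i|$. This is the paper's mechanism: the factor $1/|u_j-u_i|$ in \eqref{sec4.1:lab10} is absorbed by $u_j-u_i$ in $S_1$. Concretely, compare $\beta_{ji}(u_h,v_h)\delta_{ji}u_h$ with $\beta_{ji}(\bar u_h,\bar v_h)\delta_{ji}\bar u_h$ in two cases.

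\textbf{Case 1.} Suppose $\delta_{ji}u_h$ and $\delta_{ji}\bar u_h$ have opposite signs or one vanishes. This covers every point where a denominator of $\alpha_{\a_i}$ or $\alpha_{\a_j}$ vanishes. Then $|\delta_{ji}u_h|+|\delta_{ji}\bar u_h|=|\delta_{ji}(u_h-\bar u_h)|$, so boundedness of $\beta_{ji}$ suffices.

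\textbf{Case 2.} Suppose they have the same strict sign. Then $f_{ij}$ and $f_{ji}$ coincide with their smooth extensions, so they are Lipschitz. Apply \eqref{ineq:sum} with $a_n$ the terms $\frac{u_k-u_i}{|\boldsymbol{r}_{ik}|}$, $\frac{u^{\rm sym}_k-u_i}{|\boldsymbol{r}^{\rm sym}_{ik}|}$ built from $u_h$, and $b_n$ the same terms built from $\bar u_h$. It bounds $|\alpha_{\a_i}(u_h)-\alpha_{\a_i}(\bar u_h)|$ by $2q\sum|a_n-b_n|/\sum|a_n|$, for $q\ge1$. The denominator is at least $|u_j-u_i|/|\boldsymbol{r}_{ij}|$, and likewise for $\alpha_{\a_j}$, since $\a_i$ is a neighbour of $\a_j$. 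Together with $\rho h\le|\boldsymbol{r}|\le h$ this yields
$|\beta_{ji}(u_h,v_h)-\beta_{ji}(\bar u_h,\bar v_h)|\,|u_j-u_i|\le C\big(|\u_h-\bar\u_h|_\infty+|\v_h-\bar\v_h|_\infty\big)$.

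Finally, drop your fallback. Peano plus ``monotonicity from the positive-semidefinite structure of $B$'' does not give uniqueness, because $B$ is evaluated at the unknown. The difference of two solutions therefore contains $\big(B(u_h,v_h)-B(\bar u_h,\bar v_h)\big)\bar u_h$, which requires exactly the estimate above; the chemotaxis term has no monotone structure either.
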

 \begin{proof}
We define the vectors $\u_h=(u_i)_{i\in I}\in\R^I$ and $\v_h=(v_i)\in \R^I$ as being the coefficients in the linear combinations of $u_h=\sum_{i\in I} u_i\varphi_{\a_i}$ and $v_h=\sum_{i\in I}v_i\varphi_{\a_i}$, respectively.  Thus, for each $i\in I$, one can extract the corresponding differential equation from \eqref{eq:u_h}-\eqref{eq:v_h} by choosing $\bar u_h=\varphi_{\a_i}$ in \eqref{eq:u_h} and $\bar v_h=\varphi_{\a_i}$ in \eqref{eq:v_h}, which allows us to write

\begin{equation}\label{sec4.1:lab1}
\begin{array}{rcl}
\displaystyle
u'_i &=&\displaystyle - \|\varphi_{\a_i}\|_{L^1(\Omega)}^{-1}\sum_{\mbox{\tiny $\begin{array}{c} j\in I(\Delta_{\a_i})  \\ i<j\end{array}$}} u_j(\nabla\varphi_{\a_j}, \nabla \varphi_{\a_i})
\\
&&
\displaystyle
+\|\varphi_{\a_i}\|_{L^1(\Omega)}^{-1}\sum_{\mbox{\tiny$\begin{array}{c} j\in I(\Delta_{\a_i})  \\ i<j\end{array}$}}\tau_{ji}(u_h) \delta_{ji} \log(v_h)(\nabla \varphi_{\boldsymbol{a}_j},\nabla\varphi_{\boldsymbol{a}_i})
\\
&&
\displaystyle
-\|\varphi_{\a_i}\|_{L^1(\Omega)}^{-1}\sum_{\mbox{\tiny$\begin{array}{c} j\in I(\Delta_{\a_i})  \\ i<j\end{array}$}}\beta_{ji}(u_h, v_h) (u_j- u_i) 
\\
&:=&\|\varphi_{\a_i}\|_{L^1(\Omega)}^{-1}(- F_1^u(\u_h, \v_h)+F_2^u(\u_h,\v_h)-F_3^u(\u_h, \v_h))
\\
&:=&\mathcal{F}^u_i(\u_h, \v_h)
\end{array}
\end{equation}
and
\begin{equation}\label{sec4.1:lab2}
\begin{array}{rcl}
\displaystyle
v'_i&=&\displaystyle - \|\varphi_{\a_i}\|_{L^1(\Omega)}^{-1} \sum_{\mbox{\tiny$\begin{array}{c} j\in I(\Delta_{\a_i})  \\ i<j\end{array}$}} v_i (\nabla\varphi_{\a_j},\nabla \varphi_{\a_i})- u_i v_i 
\\
&:=&\|\varphi_{\a_i}\|_{L^1(\Omega)}^{-1}(-F_1^v(\u_h, \v_h)-F_2^v(\u_h,\v_h))
\\
&:=&\mathcal{F}^v_i(\u_h, \v_h).
\end{array}
\end{equation}

If one further defines $\boldsymbol{y}_h=(\u_h, \v_h)$, $\boldsymbol{\mathcal{F}}^u=(\mathcal{F}_i^u)_{i\in I}$ and $\boldsymbol{\mathcal{F}}^v=(\mathcal{F}^v_i)_{i\in I}$,  system \eqref{eq:u_h}-\eqref{eq:v_h} can be rewritten as  
\begin{equation}\label{IVP_for_y}
\left\{
\begin{array}{rcl}
\boldsymbol{y}_h'&=&\boldsymbol{\mathcal{F}} (\boldsymbol{y}_h),
\\
\boldsymbol{y}_h(0)&=&\boldsymbol{y}_{0h},
\end{array}
\right.
\end{equation}
where $\boldsymbol{\mathcal{F}}=(\boldsymbol{\mathcal{F}}^u, \boldsymbol{\mathcal{F}}^v)$ and $\boldsymbol{y}_{0h}=(\u_{0h},\v_{0h})$ with $\u_{0h}=(u_{0i})$ and $\v_{0h}=(u_{0i})$ being such that $u_{0h}=\sum_{i\in I} u_{0i}\varphi_{\a_i}$ and $v_{0h}=\sum_{i\in I} v_{0i}\varphi_{\a_i}$.

By inspecting $\boldsymbol{\mathcal{F}}$, one readily sees that it is continuous on $\mathcal{O}:= \mathcal{O}_u\times\mathcal{O}_v$, where $\mathcal{O}_u=\{\u_h\in\R^{I}\,:\, 1+u_h>0\}$ and $\mathcal{O}_v=\{\v_h\in\R^I :\, \v_h>0\}$. Then Peano's theorem assures existence of a solution on $[0,T]$ for certain $T>0$. 

For uniqueness we now concentrate our attention on proving that the right-hand side of \eqref{sec4.1:lab1} and \eqref{sec4.1:lab2} are locally Lipschitzian; a task that will be somewhat involved. 

For $\Xi= u$ or $v$, let $m_\Xi, M_\Xi\in\R$ be such that $-\frac{1}{2}<m_u < \min_{i\in I} u_{0i}$, and $0<m_v<\min_{i\in I} v_{0i}$ and $M_u>|\u_0|_{\infty}$ and $M_v>|\v_0|_{\infty}$, where $|\cdot|_{\infty}$ stands for the maximum norm on $\R^I$.  Consider   
$$
\mathcal{B}(\boldsymbol{y}_0)=\mathcal{B}(\u_{0h}; m_u, M_u)\times \mathcal{B}(\v_{0h}; m_v, M_v),
$$
where
$$
\mathcal{B}(\u_{0h}; m_u, M_u)=\{\u_h\in\R^{I}\,:\,  m_u\le \u_h \le M_u\}
$$
and
$$
\mathcal{B}(\v_{0h}; m_v, M_b) = \{\v_h\in\R^{I}\,:\ m_v \le \v_h\le M_v \}.
$$

We analyze separately: 
\begin{itemize}
\item As $F^\Xi_1$, for $\Xi=u$ or $v$, in \eqref{sec4.1:lab1} and \eqref{sec4.1:lab2}, respectively, are linear, they are straightforwardly seen to be Lipzchitzian. 

\item The term $F_2^v$ is quadratic and hence Lipzchitzian as well. 

\item The term $F_2^u$,  being the chemotactic one, requires some elaborations. For two pairs $(\u_h, \v_h)\in\mathcal{B}(\boldsymbol{y}_0)$ and $(\bar \u_h, \bar \v_h)\in\mathcal{B}(\boldsymbol{y}_0)$, write    
$$
\begin{array}{rcl}
F^u_2(\u_h, \v_h)-F^u_2(\bar\u_h, \bar\v_h)&=&\displaystyle\sum_{\mbox{\tiny$\begin{array}{c} j\in I(\Delta_{\a_i})  \\ i<j\end{array}$}}\Big(\tau_{ji}(u_h)- \tau_{ji}(\bar u_h)\Big) \delta_{ji} \log(v_h)(\nabla \varphi_{\boldsymbol{a}_j},\nabla\varphi_{\boldsymbol{a}_i})
\\
&&
\displaystyle 
+\sum_{\mbox{\tiny$\begin{array}{c} j\in I(\Delta_{\a_i})  \\ i<j\end{array}$}}\tau_{ji}(\bar u_h) \Big(\delta_{ji} \log(v_h)- \delta_{ji} \log(\bar v_h)\Big) (\nabla \varphi_{\boldsymbol{a}_j},\nabla\varphi_{\boldsymbol{a}_i})
\\
&=&T_1+ T_2.
\end{array}
$$

\begin{itemize}
\item To treat $T_1$, one needs to control the difference $\tau_{ji}(u_h)- \tau_{ji}(\bar u_h)$. According to the definition of $\tau_{ji}(\cdot)$ in \eqref{def:tau_ij} there are four expressions that such a difference might take. We only detail two of these expressions, because the others are handled  in pretty much the same way. Then:

\textbf{Case} $u_i\not=u_j$ and $\bar u_i=\bar u_j$: From \eqref{def:tau_ij}, we note that
$$
\begin{array}{rcl}
\tau_{ji}(u_h)-\tau_{ji}(\bar u_h) &=&\displaystyle \left|\Lambda_{ij}(u_h)\frac{\delta_{ji} \log(1+u_h)}{\delta_{ij}\frac{1}{1+u_h}}- \bar u_i\right|
\\ [15pt]
&\le&\displaystyle
\left|\Big(\Lambda_{ij}(u_h)-\Lambda_{ij}(\bar u_h)\Big) \frac{\delta_{ji} \log(1+  u_h)}{\delta_{ij}\frac{1}{1+ u_h}}\right|
\\
&&\displaystyle
+\left|\Lambda_{ij}(\bar u_h) \left(\frac{\delta_{ji} \log(1+  u_h)}{\delta_{ij}\frac{1}{1+ u_h}}- \frac{\bar u_i}{\Lambda_{ij}(\bar u_h)}\right)\right|
\\
&:=&T_{11}+T_{12}, 
\end{array}
$$ 
where recall $\Lambda_{ij}(u_h)=\frac{1}{2}(\frac{u_i}{1+u_i}+\frac{u_j}{1+u_j})$. By the mean value theorem, we have that there are $\lambda_i, \lambda_j\in (0,1)$ such that  
$$
\begin{array}{rcl}
\Lambda_{ij}(u_h)-\Lambda_{ij}(\bar u_h)&=&\displaystyle\frac{1}{2}\frac{1}{(\lambda_i(1+u_i) + (1-\lambda_i)(1+ \bar u_i) )^2} (u_i-\bar u_i)
\\
&&\displaystyle
+\frac{1}{2}\frac{1}{(\lambda_j(1+u_j) + (1-\lambda_j)(1+ \bar u_j) )^2} (u_j-\bar u_j)
\\
&\le&\displaystyle
2 (|u_i-\bar u_j|+|u_j-\bar u_j|),
\end{array}
$$
since $\bar u_i, u_i>- \frac{1}{2}$ and, by \eqref{ineq:log}, that  
$$
\frac{\delta_{ji} \log(1+ \bar u_h)}{\delta_{ij}\frac{1}{1+\bar u_h}}\le \sqrt{1+\bar u_i}\sqrt{1+\bar u_j}\le 1+M_u.
$$
It then follows  that  
$$
T_{11}\le 2 \Big(1+M_u\Big) \Big( |u_i-\bar u_i|+ |u_j-\bar u_j| \Big). 
$$
Observe now that, as $\bar u_i= \bar u_j$, it gives $\Lambda_{ij}(\bar u_h)= \frac{\bar u_i}{1+\bar u_i}$. Then the mean value theorem, applied to $T_{12}$, shows that   
 $$
\begin{array}{l}
\displaystyle
\frac{\delta_{ji} \log(1+  u_h)}{\delta_{ij}\frac{1}{1+ u_h}}- \frac{\bar u_i}{\Lambda_{ij}(\bar u_h)}
\\
=\displaystyle\left|\frac{(1+ u_j) (1+ u_i)}{\theta (1+  u_i)+(1-\theta)(1+  u_j)}-(1+\bar u_i)\right|
\\
=\displaystyle\left|\frac{(1+ u_j) (1+ u_i)}{\theta (1+  u_i)+(1-\theta)(1+  u_j)}-\frac{(1+\bar u_i)(1+ \bar  u_j)}{\theta (1+ \bar u_i)+(1-\theta)(1+ \bar u_j)}\right|
\\
\le\displaystyle\frac{1}{\theta (1+ u_i)+(1-\theta)(1+ u_j)} \Big((1+ u_j)| u_i- \bar u_i|+(1+\bar u_i)|u_j-\bar u_j|\Big)
\\
 + \displaystyle (1+  \bar u_j) (1+ \bar u_i)\left|\frac{1}{\theta (1+ u_i)+(1-\theta)(1+u_j)}-\frac{1}{\theta (1+   \bar u_i)+(1-\theta)(1+  \bar u_j)}\right|
 \\
 \le \displaystyle
 4(1+M_u)\Big(|\bar u_i- u_i|+|\bar u_j - u_j|\Big),
\end{array}
$$
where in the second inequality we used Young's inequality $a^{\tilde\theta} b^{1-\tilde\theta}\le \tilde\theta a + (1-\tilde\theta) b$.
As $\Lambda_{ij}(\bar u_h)< 2 M_u$, this implies that 
\begin{equation}\label{sec4.1:lab3}
T_{12}\le 8 M_u (1+M_u)\Big(|\bar u_i- u_i|+|\bar u_j - u_j|\Big).
\end{equation}

Further invoking \eqref{inv_local:WlpToWmq} gives  
$$
\begin{array}{rcl}
(\nabla\varphi_{\a_j},\nabla\varphi_{\a_i})&\le&\displaystyle \sum_{T\in \Delta_{\a_i}\cap\Delta_{\a_j}} \|\nabla\varphi_{\a_i}\|_{L^2(T)} \nabla\varphi_{\a_j}\|_{L^2(T)}
\\
&\le&\displaystyle
C_{\rm inv}  \sum_{T\in \Delta_{\a_i}\cap\Delta_{\a_j}}   \|\varphi_{\a_i}\|_{L^\infty(T)} \|\varphi_{\a_j}\|_{L^\infty(T)}
\\
&\le& C_{\rm inv} C_{\cap}:= K_\cap,
\end{array}
$$
where $C_{\cap}=\max_{i,j\in I} {\rm card}\{ T : T\in \Delta_{\a_i}\cap\Delta_{\a_j}\}$. Thus we arrive at 
\begin{equation}\label{sec4.1:lab4}
T_1\le\displaystyle 2 K_\cap \max\{1,  4 M_u\} (1+M_u)\frac{M_v}{m_v}  \sum_{j\in I(\Delta_{\a_i})}  \Big(|\bar u_i- u_i|+|\bar u_j - u_j|\Big),
\end{equation}
since 
$$
\delta_{ji} \log(v_h)=\frac{1}{\gamma v_i + (1-\gamma) v_j} ( v_i - v_j)\le  \frac{M_v}{m_v}
$$
for $\gamma\in(0,1)$. 

  \textbf{Case} $u_i\not=u_j$ and $\bar u_i\not=\bar u_j$: Now 
$$
\begin{array}{rcl}
\tau_{ji}(n_h)-\tau_{ji}(\bar n_h) &=&\displaystyle \left|\Lambda_{ij}(u_h)\frac{\delta_{ji} \log(1+u_h)}{\delta_{ij}\frac{1}{1+u_h}}-\Lambda_{ij}(\bar u_h)\frac{\delta_{ji} \log (1+\bar u_h)}{\delta_{ij}\frac{1}{1+\bar u_h}}\right|
\\ [15pt]
&\le&\displaystyle
\left|\Big(\Lambda_{ij}(u_h)-\Lambda_{ij}(\bar u_h)\Big) \frac{\delta_{ji} \log(1+  u_h)}{\delta_{ij}\frac{1}{1+ u_h}}\right|
\\
&&\displaystyle
+\left|\Lambda_{ij}(\bar u_h) \left(\frac{\delta_{ji} \log(1+  u_h)}{\delta_{ij}\frac{1}{1+ u_h}}-\frac{\delta_{ji} \log(1+ \bar u_h)}{\delta_{ij}\frac{1}{1+\bar u_h}}\right)\right|
\\
&:=&T_{11}+T_{12}.
\end{array}
$$ 
We only need to control the new term $T_{12}$. Let $ g: [m_u, M_u]^2\to \R$ such that 
$$
g(x,y)=\left\{
\begin{array}{ccl}
\displaystyle
\frac{\log(1+y)-\log (1+x)}{\frac{1}{1+x}-\frac{1}{1+y}}&\mbox{ if }& x\not=y,
\\
 1+x&\mbox{ if }& x=y.
\end{array}
\right.
$$
We want to prove that $g$ is Lipschizian on $[m_u, M_u]^2$. We do so by controlling  $\partial_x g$ and $\partial_y g$ in $W^{1,\infty}([m_u, M_u]^2)$. Indeed,
$$
\partial_x g(x,y)= \frac{(1+y)[(1+y)\log(\frac{1+y}{1+x})-x+y]}{(x-y)^2}=\frac{(1-\theta)(1+y)}{\theta(1+y)+(1-\theta)(1+x)},
$$ 
with $\theta\in(0,1)$ depending on $x$ and $y$. If $m_u\le x,y\le M_u$, it follows that 
$$
\|\partial_x g(x,y)\|_{L^\infty([m_u, M_u]^2)}\le 2 (1+M_u).
$$
Analogously, one finds  
$$
\|\partial_y g(x,y)\|_{L^\infty([m_u, M_u]^2)}\le2(1+M_u).
$$
Morrey's inequality implies the Lipschizian property for $g$. Consequently inequality \eqref{sec4.1:lab3} holds and hence \eqref{sec4.1:lab4} does as well.

\item For $T_2$, it should be noted that
$$
\begin{array}{rcl}
\delta_{ji} \log(v_h)- \delta_{ji} \log(\bar v_h)&=&\log  v_i -\log \bar v_i+ \log  v_j-\log \bar v_j
\\
&\le& \dfrac{1}{m_v}\Big(|v_i-\bar v_i|+|v_j-\bar v_j|\Big).
\end{array}
$$ 
Furthermore if one assumes from \eqref{def:tau_ij} that $u_i\not=u_j$ holds, there follows that, for $\theta\in(0,1)$,   
$$
\begin{array}{rcl}
\tau_{ji}(\bar u_h)&=& \Lambda_{ij}(\bar u_h) \dfrac{\delta_{ji} \log(1+ \bar u_h)}{\delta_{ij}\frac{1}{1+\bar u_h}}
\\[3ex]
&\le& 2 M_u (1+\bar u_i) (1+\bar u_j) \dfrac{\delta_{ji}\log(1+\bar u_h)}{\delta_{ji}(1+\bar u_h)}.
\\
&\le& 2 M_u \dfrac{(1+\bar u_i) (1+\bar u_j)}{\theta (1+\bar u_j) + (1-\theta)(1+ \bar u_i)}\le 4 M_u( 1+M_u)^2;
\end{array}
$$
it is obvious for $u_i=u_j$. These two last bounds lead to 
\begin{equation}\label{sec4.1:lab5}
T_2\le 4 K_\cap M_u \frac{(1+M_u)^2}{m_v} \sum_{j\in I(\Delta_{\a_i})} \Big(|v_i-\bar v_i|+|v_j-\bar v_j|\Big).
\end{equation}

From \eqref{sec4.1:lab4} and \eqref{sec4.1:lab5}, one can find $C_{\rm m_\#, M_\#, \cap }=C(m_u, M_u, m_v, M_v, K_\cap)>0$ such that     
$$
\begin{array}{rcl}
|F^u_2(\u_h, \v_h)-F^u_2(\bar\u_h, \bar\v_h)|&\le& \displaystyle
C_{ m_\#, M_\#, \cap } \left(\sum_{j\in I(\Delta_{\a_i})}\Big(|v_i-\bar v_i|+|v_j-\bar v_j|\Big) \right.
\\
&&\displaystyle
\left.+\sum_{j\in I(\Delta_{\a_i})}  \Big(|\bar u_i- u_i|+|\bar u_j - u_j|\Big)\right).
\end{array}
$$

\end{itemize}

\item To prove that $F_3^u(\cdot, \cdot)$ is Lipschitzian, we proceed in the following manner. On comparing, we obtain  
$$
\begin{array}{rcl}
F_3^u(\u_h, \v_h)-F_3^u(\bar\u_h, \bar\v_h)&=&\displaystyle\sum_{j\in I(\Delta_{\a_i})}\Big(\beta_{ji}(u_h, v_h)- \beta_{ji}(\bar u_h,\bar v_h) \Big) (u_j- u_i)
\\
&&\displaystyle+\sum_{j\in I(\Delta_{\a_i})}\beta_{ji}(\bar u_h, \bar v_h) \Big( (u_j- u_i)- (\bar u_j- \bar u_i)\Big)
\\
&:=&S_1+S_2.
\end{array}
$$
Thus
$$
\begin{array}{rcl}
\beta_{ji}(u_h, v_h)- \beta_{ji}(\bar u_h, \bar v_h)&=&\max\{\alpha_{\a_i}(u_h) f_{ij}(u_h, v_h), \alpha_{\a_j}(u_h)f_{ji}(u_h,v_h), 0\}
\\
&&-\max\{\alpha_{\a_i}(\bar u_h) f_{ij}(\bar u_h,\bar v_h), \alpha_{\a_j}(\bar u_h)f_{ji}(\bar u_h, \bar v_h), 0\}
\\
&\le&\max\{\alpha_{\a_i}(u_h) f_{ij}(u_h, v_h)-\alpha_{\a_i}(\bar u_h) f_{ij}(\bar u_h, \bar v_h ), 
 \\
&&\hspace{1.5cm}\alpha_{\a_j}(u_h)f_{ji}(u_h, v_h)-\alpha_{\a_j}(\bar u_h)f_{ji}(\bar u_h, \bar v_h), 0\},
\end{array}
$$ 
where recall 
$$
f_{ij}(u_h, v_h)=\left\{
\begin{array}{ccl}
\displaystyle
\delta_{ji} \log v_h \Big[\frac{u_i}{\delta_{ji} u_h}- \frac{\tau_{ji}(u_h)}{\delta_{ji} u_h}\Big](\nabla \varphi_{\a_j}, \nabla \varphi_{\a_i})&\mbox{ if }& x_j\not=x_i,
\\
0&\mbox{ if }&x_j=x_i.
\end{array}
\right.
$$

Let us define $f_{ij}(u_h, v_h)= \delta_{ji} \log v_h \Big[\frac{\tau_{ji}(u_h)}{\delta_{ji} u_h}-\frac{u_i}{\delta_{ji} u_h}\Big](\nabla\varphi_{\a_j}, \nabla\varphi_{\a_i})=\bar f_{ij}(u_h, v_h)/\delta_{ji} u_h$ and write
\begin{equation}\label{sec4.1:lab6}
\begin{array}{rcl}
\alpha_{\a_i}(u_h) f_{ij}(u_h, v_h)-\alpha_{\a_i}(\bar u_h) f_{ij}(\bar u_h, \bar v _h )&=&f_{ij}(u_h, v_h)(\alpha_{\a_i}(u_h)-\alpha_{\a_i}(\bar u_h))
\\
&&+ \alpha_{\a_i}(\bar u_h) ( f_{ij}(u_h,v_h)- f_{ij}(\bar u_h, \bar v_h))
\\
&=&\displaystyle f_{ij}(u_h, v_h)(\alpha_{\a_i}(u_h)-\alpha_{\a_i}(\bar u_h))
\\
&&\displaystyle+\frac{\alpha_{\a_i}(\bar u_h)}{\delta_{ji} u_h }(\bar f_{ij}(u_h, v_h) - \bar f_{ij}(\bar u_h, \bar v_h ))
\\
&&\displaystyle
+\alpha_{\a_i}(\bar u_h) \bar f_{ij}(\bar u_h, \bar v_h )\Big(\frac{1}{\delta_{ji} u_h }-\frac{1}{\delta_{ji} \bar u_h }\Big).
\end{array}
\end{equation}   
Let 
$$
\widetilde\alpha_{\a_i}(x_h) = \left\{
\begin{array}{cc}  
\left[
\frac{{\sum_{j\in I(\Delta_{\a_i})} [\![\nabla x_h]\!]_{ij}}}{\sum_{j\in I(\Delta_{\a_i})} 2\{\!\!\{|\nabla v_h \cdot \hat{\boldsymbol{r}}_{ij}|\}\!\!\}_{ij}}
\right]^q & \text{if } 
\sum_{j\in I(\Delta_{\a_i})} \{\!\!\{|\nabla x_h\cdot \hat{\boldsymbol{r}}_{ij}|\}\!\!\}_{ij} \neq 0, 
\\
0 & \text{otherwise},
\end{array}
\right. 
$$
which is exactly as $\alpha_{\a_i}$ but without using the positive part. 
As
$$
|\alpha_{\a_i}(u_h)-\alpha_{\a_i}(\bar u_h)|\le |\widetilde\alpha_{\a_i}(u_h)-\widetilde\alpha_{\a_i}(\bar u_h)|, 
$$
we use
$$
\widetilde\alpha_{\a_i}(u_h)^{\frac{1}{q}}=\frac{\left|\displaystyle\sum_{k\in I(\Delta_{\a_i})} \Big(\frac{u_k - u_i}{|\boldsymbol{r}_{ik}|} + \frac{u_k^{\rm sym} - u_i}{|\boldsymbol{r}_{ik}^{\rm sym}|}\Big)\right|}{\displaystyle\sum_{j\in I(\Delta_{\a_i})} \left(\frac{|u_k-u_i|}{|\boldsymbol{r}_{ij}|}+\frac{|u_k^{\rm sym}-u_i|}{|\boldsymbol{r}_{ij}^{\rm sym}|}\right)}.
$$
In view of \eqref{ineq:sum}, we have, on noting that $|x^q-y^q|\le q |x-y|$ holds for all $x,y\in[0,1]$,  that \cite[Th. 6.1]{Badia_Bonilla_2017} 
$$
|\alpha_{\a_i}(u_h)-\alpha_{\a_i}(\bar u_h)|\le 2 q\frac{\Big|{\displaystyle\sum_{k\in I(\Delta_{\a_i})} \Big(\frac{(u-\bar u)_k - (u - \bar u)_i}{|\boldsymbol{r}_{ik}|} + \frac{(u-\bar u)_k^{\rm sym} - (u- \bar u)_i}{|\boldsymbol{r}_{ik}^{\rm sym}|}\Big)}\Big|}{\displaystyle\sum_{k\in I(\Delta_{\a_i})} \left(\frac{|u_k-u_i|}{|\boldsymbol{r}_{ik}|}+\frac{|u_k^{\rm sym}-u_i|}{|\boldsymbol{r}_{ik}^{\rm sym}|}\right)}.
$$
From the quasi-uniformity for $\mathcal{T}_h$, we know that there exists $\rho>0$ such that $\rho h\le |\boldsymbol{r}_{ik}|, |\boldsymbol{r}^{\rm sym}_{ik}|\le h $. Thus, one can find $C_{q,\rho}=C(q,\rho)>0$, independent of $h$, such that 
$$
\begin{array}{rcl}
|\alpha_{\a_i}(u_h)-\alpha_{\a_i}(\bar u_h)|&\le& \displaystyle C_{q,\rho} \frac{\displaystyle\sum_{k\in I(\Delta_{\a_i})} \Big(|(u-\bar u)_k - (u - \bar u)_i| + |(u-\bar u)_k^{\rm sym} - (u- \bar u)_i|\Big)}{\displaystyle\sum_{k\in I(\Delta_{\a_i})} \Big(|u_k-u_i|+|u_k^{\rm sym}-u_i|\Big)}
\\
\displaystyle &\le&\displaystyle C_{q,\rho} \frac{\displaystyle\sum_{k\in I(\Delta_{\a_i})} \Big(|u_k-\bar u_k|+|u_i-\bar u_i| \Big)}{|u_j-u_i|},
\end{array}
$$
where we used that there is $\gamma\in(0,1)$ such that $\a_{k_1}, \a_{k_2}\in\mathcal{N}_h$ such that $\a_{k}^{\rm sym}=\gamma \a_{k_1}+ (1-\gamma)\a_{k_2}$ with  $u^{\rm sym}_k=\gamma u_{k_1}+(1-\gamma) u_{k_2}$ and $\bar u^{\rm sym}_k=\gamma \bar u_{k_1}+(1-\gamma) \bar u_{k_2}$. Additionally, note that $ |f_{ij}(u_h, v_h)|\le K_\cap[ 1+(1+M_u)\frac{M_v}{m_v}]$,
since 
$$
\begin{array}{rcl}
\displaystyle
\frac{\tau_{ji}(u_h)}{\delta_{ji} u_h}-\frac{u_i}{\delta_{ji} u_h}&=&\displaystyle
\frac{1}{2} \frac{1}{\delta_{ji} u_h} \frac{u_i}{1+u_i}  \Big(\frac{(1+u_j)(1+u_i)}{\mu(1+u_i)+(1-\mu)(1+u_j)}- (1+u_i) \Big) 
\\
&&\displaystyle
+ \frac{1}{2}  \frac{1}{\delta_{ji} u_h} \Big( \frac{u_j}{1+u_j}  \frac{(1+u_j)(1+u_i)}{\mu(1+u_i)+(1-\mu)(1+u_j)}- u_i  \Big)
\\
&:= &B_1+B_2,
\end{array}
$$
with
\begin{equation}\label{sec4.1:B1}
\begin{array}{rcl}
B_1&=&\displaystyle
\frac{1}{2} \frac{1}{\delta_{ji} u_h} \frac{u_i}{1+u_i}  \Big(\frac{(1+u_j)(1+u_i)}{\mu(1+u_i)+(1-\mu)(1+u_j)}
\\
&&\displaystyle
\hspace{4cm}- \frac{(1+u_j)(1+u_i)}{\mu(1+u_j)+(1-\mu)(1+u_j)} \Big) 
\\
&=&\displaystyle
\frac{1}{2} \frac{u_i (1+u_j)}{\delta_{ji} u_h}   \Big(\frac{1}{\mu(1+u_i)+(1-\mu)(1+u_j)}
\\
&&\displaystyle
\hspace{4cm}- \frac{1}{\mu(1+u_j)+(1-\mu)(1+u_j)} \Big)
\\
&=&\displaystyle
\frac{1}{2} \frac{u_i (1+u_j) }{\delta_{ji} u_h} \frac{\mu \delta_{ji} u_h}{[\mu(1+u_i)+(1-\mu)(1+u_j)] (1+u_j)} 
\\
&=&\displaystyle
\frac{1}{2}  \frac{\mu u_i}{\mu(1+u_i)+(1-\mu)(1+u_j)} 
\end{array}
\end{equation}
and
\begin{equation}\label{sec4.1:B2}
\begin{array}{rcl}
B_2&=&\displaystyle
\frac{1}{2}  \frac{1}{\delta_{ji} u_h} \Big( \frac{u_j}{1+u_j}  \frac{(1+u_j)(1+u_i)}{\mu(1+u_i)+(1-\mu)(1+u_j)} - u_j+  (u_j - u_i)  \Big)
\\
&=&\displaystyle
\frac{1}{2}  \frac{(1-\mu) u_j}{\mu(1+u_i)+(1-\mu)(1+u_j)} + \frac{1}{2},
\end{array}
\end{equation}
which give 
$$
\left|\frac{\tau_{ji}(u_h)}{\delta_{ji} u_h}-\frac{u_i}{\delta_{ji} u_h}\right|\le  1+M_u .  
$$
This leads to 
\begin{equation}\label{sec4.1:lab7}
\begin{array}{rcl}
|f_{ij}(u_h, v_h)(\alpha_{\a_i}(u_h)&-&\alpha_{\a_i}(\bar u_h))|
\\
&\le& \displaystyle C_{q,\rho, \cap} \Big[ 1+(1+M_u)\frac{M_v}{m_v}\Big]\frac{\displaystyle\sum_{k\in I(\Delta_{\a_i})} \Big(|u_k-\bar u_k|+|u_i-\bar u_i| \Big)}{|u_j-u_i|}. 
\end{array}
\end{equation}
By Lemma \ref{lm:alpha_i}, we know that $|\alpha_{\a_i}|\le 1$. Thus it is easily seen from a similar argument that $F^u_2$ is Lipschitzian that 
\begin{equation}\label{sec4.1:lab8}
\begin{array}{rcl}
\displaystyle
\frac{\alpha_{\a_i}(\bar u_h)}{\delta_{ji} u_h }(\bar f_{ij}(u_h, v_h) - \bar f_{ij}(\bar u_h, \bar v_h ))&\le&\displaystyle C_{m_\#, M_\#, \cap} \left( \frac{|\bar u_i- u_i|+|\bar u_j - u_j|}{|u_j-u_i|}\right.
\\
&&\displaystyle
\hspace{2cm}\left.+\frac{|v_i-\bar v_i|+|v_j-\bar v_j|}{|u_j-u_i|}\right),
\end{array}f_{ij}
\end{equation}
where $C_{m_\#, M_\#, \cap}=C(m_u, M_u, m_v, M_v, \cap)$.  Next we write
\begin{align}\label{sec4.1:lab9} 
\displaystyle
\alpha_{\a_i}(\bar u_h) \bar f_{ij}(\bar u_h)\Big(\frac{1}{\delta_{ji} u_h}&-\frac{1}{\delta_{ji} \bar u_h}\Big)
\nonumber
\\
&=\alpha_{\a_i}(\bar u_h) \frac{\bar f_{ij}(\bar u_h)}{(u_j-u_i)(\bar u_j-\bar u_i)} \Big((\bar u_i- u_i)+(\bar u_j - u_j)\Big)
\\
& \displaystyle
\le K_\cap\Big[ 1+(1+M_u)\frac{M_v}{m_v}\Big] \frac{|\bar u_i- u_i|+|\bar u_j -  u_j|}{|u_j-u_i|}.
\nonumber
\end{align}

Plugging \eqref{sec4.1:lab7}, \eqref{sec4.1:lab8}, \eqref{sec4.1:lab9} into \eqref{sec4.1:lab6} yields 
\begin{align*}
|\beta_{ji}(u_h, v_h)&- \beta_{ji}(\bar u_h,\bar v_h)|
\\
&\le \frac{C_{q, \rho, m_\#, M_\#, \cap}}{|u_j-u_i|} \left(\sum_{k\in I(\Delta_{\a_i})} \Big(|u_k-\bar u_k|+|u_i-\bar u_i|\Big)
+|v_j-\bar v_j|+|v_i-\bar v_i|\right)
\end{align*}
and hence, if $K_\Delta=\max_{i\in I} I(\Delta_{\a_i})$,   
\begin{equation}\label{sec4.1:lab10}
S_1\le \frac{C_{q, \rho, m_\#, M_\#, \cap, \Delta}}{|u_j-u_i|} \left(\sum_{j\in I(\Delta_{\a_i})}\Big[\Big(|u_j-\bar u_j|+|u_i-\bar u_i|\Big)
+\Big(|v_j-\bar v_j|+|v_i-\bar v_i|\Big)\Big] \right).
\end{equation}

In regard to $S_2$, it can be trivially addressed as 
\begin{equation}\label{sec4.1:lab11}
S_2\le K_\cap[ 1+(1+M_u)\frac{M_v}{m_v}] \sum_{j\in I(\Delta_{\a_i})}\Big( |u_j- \bar u_j)+ | u_i- \bar u_i|\Big).
\end{equation}
In this last estimate, we utilized   
$$
\beta_{ji}(\bar u_h,\bar v_h)\le K_\cap[ 1+(1+M_u)\frac{M_v}{m_v}]. 
$$
From \eqref{sec4.1:lab10} and \eqref{sec4.1:lab11}, one can find $C_{q, m_\#, M_\#, \cap,\Delta}=C(q, m_\#, M_\#, K_\cap, K_\Delta)$ such that
$$
\begin{array}{rcl}
|F_3^u(\u_h, \v_h)&-&F_3^u(\bar\u_h, \bar\v_h)|
\\
&\le& \displaystyle
C_{q, \rho, m_\#, M_\#, \cap, \Delta} \Bigg(\sum_{j\in I(\Delta_{\a_i})}\Big[\Big(|u_j-\bar u_j|+|u_i-\bar u_i|\Big)
\\
 &&\hspace{5cm}+\Big(|v_i-\bar v_i|+|v_j-\bar v_i|\Big)\Big] \Bigg). 
\end{array}
$$
\end{itemize}
Finally, we conclude that  $\boldsymbol{\mathcal{F}}$ is Lipschitzian on $\mathcal{B}(\boldsymbol{y}_0)$ for the taxicab norm $\|\cdot\|_1$. To be more precise, by tracing back,  there exists $C_{\rm Lip}=C_{q, \rho, m_\#, M_\#, \cap,\Delta}>0$ such that
$$
\|\boldsymbol{\mathcal{F}}(\boldsymbol{y}_h)-  \boldsymbol{\mathcal{F}}(\bar{\boldsymbol{y}}_h)\|_1\le C_{\rm Lip} \|\boldsymbol{y}_h- \bar{\boldsymbol{y}}_h\|_1\quad \mbox{ for all }\quad \boldsymbol{y}_h\in\mathcal{B}(\boldsymbol{y}_0); 
$$  
we have thus completed the proof of uniqueness. 

\end{proof}

\begin{remark} 
From the theory of ordinary differential equations we learn that for every $y_{0h}\in\mathcal{O}$ there is a maximal solution $\boldsymbol{y}_h:[0,T_{\rm max})\to \R^{2I}$ of the initial-value problem \eqref{IVP_for_y}. The global well-posedness  of system \eqref{eq:u_h}-\eqref{eq:v_h} lies in the fact that the trajectory $(u_h(t),v_h(t))\in \mathcal{O}$ holds for $t\in[0,\infty)$. This outcome will naturally emerge from upper, lower, and $L^1(\Omega)$ bounds for $u_h$ and $v_h$. Further details will be developed subsequently in the next two sections.    
\end{remark}

\subsection{Lower and upper bounds} 
One fundamental property of the sequence of discrete solution pairs $\{(u_h, v_h)\}_{h>0}$ computed through system \eqref{eq:u_h}-\eqref{eq:v_h} is that they enjoy lower and upper bounds; more particularly, positivity  for both $(u_h,v_h)$, and a discrete maximum principle for $v_h$ only. For this, we choose $u_{0h}=j_h u_0$ and $v_{0h}=j_h v_0$, which satisfy, by definition of $j_h$ in \eqref{def:jh}, that    
$$
0\le u_{0h} \quad\mbox{ for all } \quad \x\in \Omega
$$
and
$$
0<v_{0h}\le \|v_{0}\|_{L^\infty(\Omega)}\quad\mbox{ for all } \quad \x\in \Omega.
$$
Further we obtain, by \eqref{err_LpW1p:jh} and a regularization argument,  that, as $h\to 0$,  
\begin{equation}\label{conv:u0h->u0}
u_{0h}\to u_0\quad\mbox{ in }\quad L^1(\Omega)
\end{equation}
and
\begin{equation}\label{conv:v0h->v0}
v_{0h}\to v_0\quad\mbox{ in }\quad L^\infty(\Omega).
\end{equation}
\begin{lemma}
Let $(u_h, v_h)$ be the solution to \eqref{eq:u_h}-\eqref{eq:v_h} with initial data $(u_{0h}, v_{0h} )$. Then, the following properties hold for all $t \in [0, \infty)$:
\begin{equation}\label{Positivity_uh}
0\le u_h(\x,t)\quad\mbox{ for all } \quad (\x,t)\in \Omega\times[0,\infty)
\end{equation}
and 
\begin{equation}\label{Positivity_and_DMaxP_vh}
0<v_h(\x,t)\le \|v_{0h}\|_{L^\infty(\Omega)} \quad\mbox{ for all } \quad (\x,t)\in \Omega\times[0,\infty).
\end{equation}
\end{lemma}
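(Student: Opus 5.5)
We prove the bounds by a nodal minimum/maximum argument on the ODE system \eqref{sec4.1:lab1}--\eqref{sec4.1:lab2}, working on the maximal interval $[0,T_{\max})$ on which the trajectory stays in $\mathcal{O}$. Since $(\cdot,\cdot)_h$ is lumped, testing with $\varphi_{\a_i}$ isolates $u_i'$ and $v_i'$ with the positive weight $\|\varphi_{\a_i}\|_{L^1(\Omega)}$. Global existence then follows at the end, once the bounds keep the trajectory inside $\mathcal{O}$.

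\textbf{Bounds for $v_h$.} The equation reads
$$\|\varphi_{\a_i}\|_{L^1}v_i'=-\sum_{j\neq i}(v_j-v_i)(\nabla\varphi_{\a_j},\nabla\varphi_{\a_i})\cdot(-1)-\|\varphi_{\a_i}\|_{L^1}u_iv_i,$$
where we use $\sum_j\nabla\varphi_{\a_j}=0$. By weak acuteness, $(\nabla\varphi_{\a_j},\nabla\varphi_{\a_i})\le0$, so the diffusion contribution is a nonnegative combination of $v_j-v_i$.

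For the upper bound, at a node where $v_i=\max_k v_k$ the diffusion part is $\le0$. Since $v_i>0$ and $u_i\ge0$ (proved jointly below), we get $v_i'\le0$, so $\max_k v_k$ is nonincreasing and $v_h\le\|v_{0h}\|_{L^\infty}$.

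For positivity, note that $v_i'\ge -c(t)v_i$ with $c=\|u_h\|_{\infty}+C h^{-2}$, which is bounded on compact subintervals of $[0,T_{\max})$. Indeed, the diffusion term is $\ge -\|\varphi_{\a_i}\|^{-1}_{L^1}\,v_i\sum_{j}|(\nabla\varphi_{\a_j},\nabla\varphi_{\a_i})|$, since the $v_j$ terms are nonnegative. Gronwall then gives $v_i(t)\ge v_{0i}e^{-\int_0^tc}>0$. The same lower bound also holds if $u_i$ is negative but finite.

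\textbf{Nonnegativity of $u_h$.} This is the core step. Write the $u$-equation at node $i$ as
$$\|\varphi_{\a_i}\|_{L^1}u_i'=\sum_{j}\Big[-(\nabla\varphi_{\a_j},\nabla\varphi_{\a_i})+\tau_{ji}(u_h)\frac{\delta_{ji}\log v_h}{\delta_{ji}u_h}(\nabla\varphi_{\a_j},\nabla\varphi_{\a_i})\cdot(\pm)+\beta_{ji}\Big](u_j-u_i).$$
The aim is to show that at a node where $u_h$ attains its minimum the coefficient of each $u_j-u_i$ is nonnegative. We then get $u_i'\ge0$ at a minimum, so $\min u_h$ is nondecreasing from $\min u_{0h}\ge0$.

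The chemotactic coefficient need not have a sign. However, the contribution $\tau_{ji}\,\delta_{ji}\log v_h(\cdots)$ can be split as $u_i\,\delta_{ji}\log v_h(\cdots)$ plus the remainder $(\tau_{ji}-u_i)\,\delta_{ji}\log v_h(\cdots)$. The remainder, divided by $\delta_{ji}u_h$, is exactly $f_{ij}$ in \eqref{def:fij^u}, up to the sign convention. At a minimum, Lemma \ref{lm:alpha_i} gives $\alpha_{\a_i}(u_h)=1$, so $\beta_{ji}\ge f_{ij}$. Hence the stabilization coefficient $\beta_{ji}$ dominates the possibly negative part, and the combined coefficient of $u_j-u_i$ becomes nonnegative. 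The bounds on $B_1,B_2$ in \eqref{sec4.1:B1}--\eqref{sec4.1:B2} show these quotients are finite, so this is well defined.

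The remaining term, $u_i\sum_j\delta_{ji}\log v_h(\nabla\varphi_{\a_j},\nabla\varphi_{\a_i})$, is proportional to $u_i$. Hence, if $u_i=\min u_h=0$, it vanishes. If instead $u_i<0$, it is bounded by $C(t)|u_i|$. Gronwall applied to $m(t)=\min(0,\min_k u_k)$ then gives $m\equiv0$, using the upper and lower $v$-bounds to control $\delta\log v_h$ on compact intervals.

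\textbf{Main obstacle.} The hardest step is the sign bookkeeping for $f_{ij}$ versus the actual coefficient, including the edge case $u_j=u_i$, together with handling a minimum attained at several nodes simultaneously. For the latter, we will use the standard argument via the Dini derivative of $\min_k u_k(t)$, which reduces to checking a node that achieves the minimum at time $t$.

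Finally, the bounds show that the trajectory stays in a compact subset of $\mathcal{O}$ on finite intervals. For this we use $u_h\ge0$, $v_h\ge v_{0}$-dependent exponential lower bounds, and mass conservation of $u_h$ (test \eqref{eq:u_h} with $\bar u_h=1$), which together give the bound $\|u_h\|_\infty\le C_h\|u_{0h}\|_{L^1}$. Hence $T_{\max}=\infty$.
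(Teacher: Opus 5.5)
Your proposal is correct and follows essentially the same route as the paper. At node $\a_i$ you split the chemotactic flux into $u_i(\nabla i_h\log v_h,\nabla\varphi_{\a_i})$ plus a remainder whose coefficient of $u_j-u_i$ is $-f_{ij}$, absorb that remainder with $\beta_{ji}\ge f_{ij}$ (from $\alpha_{\a_i}(u_h)=1$ at a minimum) and weak acuteness, close with Gronwall, and get the $v_h$ bounds from the same nodal extremum argument. Your Dini-derivative argument on $\min(0,\min_k u_k)$ is a cleaner version of the paper's assumption that a single node stays a local minimum on an interval. One small slip: your displayed $v$-equation has a stray factor $(-1)$, since the diffusion term is $-\sum_{j\neq i}(v_j-v_i)(\nabla\varphi_{\a_j},\nabla\varphi_{\a_i})$, but the conclusion you draw from it is correct.
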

\begin{proof}  We divide the proof into three steps:

\paragraph{Step 1: Nonnegativity of $u_h$.}  Let $\underline{t}\in[0, T_{\rm max})$ such that there exists $\boldsymbol{a}_i\in\mathcal{N}_h$ for which $u_i(\bar t):=u_h(\bar t, \boldsymbol{a}_i)= 0$  is a local minimum. Assume, as $u_h\in C([0,T_{\rm max}); U_h)$, there is $\bar t\in (\underline{t},  T_{\rm max})$ such that $u_i(t)$ remains  a local minimum for all  $t\in (\underline{t}, \bar t)$, and moreover $u_{\underline{t}}<0$. For the sake of simplicity and with no loss in generality, one may order the indexes such that $i<j$ for all $ j\in I(\Delta_{\a_i})$ and suppose that the leftmost inequality in \eqref{Positivity_and_DMaxP_vh} is fulfilled over $(\underline{t}, \bar t)$; otherwise one can use a truncation argument for $v_h$  (e.g., replacing $v_h$ by its nodal positive part). Define $I^*(\Delta_{\a_i})=\{j\in I(\Delta_{\a_i}): u_j=u_i\}$ and let  $I^*_c(\Delta_{\a_i})$ denote  its complementary.  Then choosing $\bar u_h=\varphi_{\a_i}$ in \eqref{eq:u_h} yields 
$$
(\partial_t u_h, \varphi_{\a_i})_h+(\nabla u_h, \nabla \varphi_{\a_i})-(u_h\nabla i_h\log v_h, \nabla \varphi_{\a_i})_*+(B(u_h,v_h) u_h, \varphi_{\a_i})=0.
$$
It is easy to check from \eqref{New_KS_discretization} and \eqref{def:tau_ij} that
$$
\begin{array}{rcl}
(u_h\nabla i_h\log v_h, \nabla \varphi_{\a_i})_*&=&\displaystyle\sum_{j\in \mathds{I}^*_c(\Delta_{\a_i})} \Lambda_{ij}(u_h)\frac{\delta_{ji}\log(1+u_h)}{\delta_{ij}\frac{1}{1+u_h}} \delta_{ji}\log v_h (\nabla \varphi_{\boldsymbol{a}_j},\nabla\varphi_{\boldsymbol{a}_i})
\\
&&+\displaystyle \sum_{j\in I^*(\Delta_{\a_i})} u_i \delta_{ji}\log v_h (\nabla \varphi_{\boldsymbol{a}_j},\nabla\varphi_{\boldsymbol{a}_i})
\\
&=&\displaystyle\sum_{j\in I^*_c(\Delta_{\a_i})} \Lambda_{ij}(u_h)\frac{\delta_{ji}\log(1+u_h)}{\delta_{ij}\frac{1}{1+u_h}} \delta_{ji}\log v_h (\nabla \varphi_{\boldsymbol{a}_j},\nabla\varphi_{\boldsymbol{a}_i})
\\
&&-\displaystyle \sum_{j\in I^*_c(\Delta_{\a_i})} u_i \delta_{ji}\log v_h (\nabla \varphi_{\boldsymbol{a}_j},\nabla\varphi_{\boldsymbol{a}_i})+ u_i (\nabla i_h\log v_h,\nabla\varphi_{\boldsymbol{a}_i}).
\end{array}
$$
Further it follows on noting \eqref{Bu} and \eqref{def:beta^u_ji} that  
$$
(B(u_h, v_h) u_h, \varphi_{\a_i})=- \sum_{j\in I^*_c(\Delta_{\a_i})} 
\beta_{ji}(u_h,v_h)\delta_{ji} u_h.
$$

Upon compiling the above equalities and recalling \eqref{def:fij^u}, one is led to write 
\begin{align*}
(1,\varphi_{\a_i}) u_i'&\displaystyle+ (\nabla u_h, \nabla\varphi_{\a_i})+\sum_{j\in I^*_c(\Delta_{\a_i})} f_{ij}(u_h, v_h) \delta_{ij} u_h 
\\
&- \sum_{j\in I^*_c(\Omega_{\a_i})} \beta_{ji}(u_h, v_h)\delta_{ji} u_h+  u_i (\nabla i_h\log v_h,\nabla\varphi_{\boldsymbol{a}_i}) =  0.
\end{align*}
In view of \eqref{def:beta^u_ji} and  observing $\alpha_i(u_h)=1$ from Lemma \ref{lm:alpha_i}, it is  deduced that, for all $t\in(\underline{t}, \bar t)$ and $j\in \mathds{I}^*_c(\Delta_{\a_i})$, 
$$
f_{ji}-\beta_{ji}(u_h,v_h)\le 0 
$$
holds and therefore
$$
[f_{ji}-\beta_{ji}(u_h, v_h)] \delta_{ji} u_h\le 0,
$$
since $u_h$ reaches a local minimum on $\Delta_{\a_i}$. Further, from the weak acuteness of $\mathcal{K}_h$, we have \cite{Xu_Zikatanov_1999} that 
$$
(\nabla u_h, \nabla\varphi_{\a_i})\le 0.
$$
Then this  gives  
\begin{align*}
0\le (1,\varphi_{\a_i}) u_i' + u_i (\nabla i_h\log v_h,\nabla\varphi_{\boldsymbol{a}_i}) \le (1,\varphi_{\a_i}) u_i' - u_i \|\nabla i_h\log v_h\|_{L^2(\Omega)} \|\nabla\varphi_{\boldsymbol{a}_i}\|_{L^2(\Omega)} ,
\end{align*}
because of $u_i(t)\le 0$ for all $t\in(\underline{t}, \bar t)$, which, in turn, yields, after integrating over $(\underline{t}, \bar t)$,  the contradiction \emph{ex hypothesi}
$$
0= u_i(\underline{t})\le \exp\left(- \int_{\underline{t}}^{\bar t} \frac{\|\nabla i_h\log v_h\|_{L^2(\Omega)} \|\nabla \varphi_{\a_i}\|_{L^2(\Omega)}}{\|\varphi_{\a_i}\|_{L^1(\Omega)}} \, {\rm d}s\right) u_i(\bar t)< 0,
$$
The above time integral is well-defined since $ v_h > 0 $ on $ [\underline{t}, \bar{t}] $, which ensures that $ \log v_h $ is well-defined, and the mapping $ s \mapsto \|\nabla i_h \log v_h(s)\|_{L^2(\Omega)} $ is continuous (hence integrable) on $[\underline{t}, \bar{t}] $.

\paragraph{Step 2: Positivity of $v_h$.} It will now be shown that the leftmost inequality in \eqref{Positivity_and_DMaxP_vh} holds. For this purpose, let $\bar t \in (0,T_{\rm max})$  be the first time for which there exists $\boldsymbol{a}_i \in \mathcal{N}_h$ such that $v_i(\bar t) := v_h(\boldsymbol{a}_i,\bar t) = 0$ is a local minimum. By continuity of $v_h$, there exists $\underline{t} \in (0,\bar t)$ such that $v_i(t)$ remains a local minimum for all $t \in (\underline{t}, \bar t)$. Picking $\bar v_h=\varphi_{\a_i}$ in \eqref{eq:v_h}, there results
$$
(\partial_t v_h,\varphi_{\a_i})_h +(\nabla v_h,\nabla \varphi_{\a_i})+(u_h v_h, \varphi_{\a_i})_h=0. 
$$
As before 
$$
(\nabla v_h, \nabla\varphi_{\a_i})\le 0
$$
and hence
$$
 v_i'+ v_i u_i\ge 0
$$
which, after integration over $(\underline{t}, \overline{t})$, gives
$$
0 < v_i(\underline{t}) \le  e^{\int_{\underline{t}}^{\bar t} u_i(s)\, d{\rm s} }v_i (\bar t). 
$$
But this last relation cannot hold unless $v_i(\bar t)> 0$, again contradicting the choice of $\a_i\in\mathcal{N}_h$. 

\paragraph{Step 3: Discrete maximum principle for $v_h$.} The similar procedure applies for proving the rightmost inequality in \eqref{Positivity_and_DMaxP_vh}, i.e., a discrete maximum principle for $v_h$. Let us thus assume that $\underline{t}\in [0,T_{\rm max})$ is the first time such that there exists a local maximum $v_i(\underline{t})=\|v_{0h}\|_{L^\infty(\Omega)}$ at certain node $\a_i\in\mathcal{N}_h$, which evolves growing up still as a local maximum over $(\underline{t}, \overline{t})$, i.e.,  $v_i(t)>\|v_{0h}\|_{L^\infty(\Omega)}$ for all $t\in(\underline{t}, \bar t]$. Then substituting $\bar v_h=\varphi_{\a_i}$ into \eqref{eq:v_h} and using 
$$
(\nabla v_h, \nabla\varphi_{\a_i})\ge 0,
$$ 
we arrive  at 
$$
v_i' + v_i u_i \le 0.
$$
On noting that $v_i u_i\ge 0$ on $[\underline{t}, \bar t]$ from the positivity in \eqref{Positivity_uh} and \eqref{Positivity_and_DMaxP_vh} and  integrating over $(\underline{t}, \bar t)$, one finds  
$$
v_i(\bar t)\le v_i(\underline{t})=\|v_{0h}\|_{L^\infty(\Omega)},
$$
This results in a contradiction from our assumption about $\a_i\in\mathcal{N}_h$. Thus the proof is complete.
\end{proof}
\begin{remark}
At this point one can only assure that, if there is a blow up in finite time, it occurs when
$\|u_h(t)\|_{L^\infty(\Omega)}\to +\infty$  as $t\to T^-_{\rm max}$. Thus we need to get an upper bound for $u_h$. 
\end{remark}
\subsection{$L^1(\Omega)$ bounds} The $L^1(\Omega)$ bounds are immediately inherited from the structure of the chemotaxis and stabilizing terms. 
\begin{lemma}
For any $t \in [0, \infty)$, the following properties hold:
\begin{equation}\label{Mass-convervation-uh}
\|u_h(t)\|_{L^1(\Omega)}=\|u_{0h}\|_{L^1(\Omega)}
\end{equation}
and
\begin{equation}\label{L1-bound-vh}
\|v_h(t)\|_{L^1(\Omega)}\le \|v_{0h}\|_{L^1(\Omega)}.
\end{equation}   
\end{lemma}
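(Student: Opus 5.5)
The plan is to test \eqref{eq:u_h} with $\bar u_h=1$ and \eqref{eq:v_h} with $\bar v_h=1$. The constant function $1=\sum_{i\in I}\varphi_{\a_i}$ belongs to $X_h$. Then we use the nonnegativity results \eqref{Positivity_uh} and \eqref{Positivity_and_DMaxP_vh} to replace integrals of $u_h$ and $v_h$ by their $L^1(\Omega)$ norms. These bounds hold on $[0,T_{\rm max})$. Once the energy estimates show $T_{\rm max}=\infty$, they extend to $[0,\infty)$; alternatively they already hold on the maximal interval, which is all that is needed at this stage.

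\textbf{Mass of $u_h$.} With $\bar u_h=1$, the mass-lumped time term becomes $(\partial_t u_h,1)_h=\int_\Omega i_h(\partial_t u_h)\,\dx=\frac{d}{dt}\int_\Omega u_h\,\dx$, because $u_h\in X_h$ and hence $i_h u_h=u_h$. The diffusion term vanishes since $\nabla 1=0$. The cross-diffusion term \eqref{New_KS_discretization} is a sum over edges $i<j$ of expressions containing the factor $\delta_{ij}\bar u_h=\bar u_i-\bar u_j$. For $\bar u_h=1$ this factor is $0$, so the whole term vanishes. The stabilization term \eqref{Bu} likewise contains $\delta_{ji}\hat x_h$ with $\hat x_h=1$, so it is also zero. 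This is precisely where the edge-based, graph-Laplacian structure of both the modified chemotaxis term and $B$ is used: each is written in terms of nodal differences of the test function.

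It follows that $\frac{d}{dt}\int_\Omega u_h\,\dx=0$. Since $u_h\ge0$ by \eqref{Positivity_uh}, we have $\int_\Omega u_h\,\dx=\|u_h\|_{L^1(\Omega)}$, and \eqref{Mass-convervation-uh} follows. The initial datum $u_{0h}=j_hu_0$ is nonnegative, so $\int_\Omega u_{0h}\,\dx=\|u_{0h}\|_{L^1(\Omega)}$.

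\textbf{$L^1$ bound for $v_h$.} Testing \eqref{eq:v_h} with $1$ gives
$$\frac{d}{dt}\int_\Omega v_h\,\dx+(u_hv_h,1)_h=0.$$
The lumped reaction term satisfies $(u_hv_h,1)_h=\sum_{i\in I}u_iv_i\int_\Omega\varphi_{\a_i}\,\dx$. This is nonnegative because $u_i\ge0$ by \eqref{Positivity_uh} and $v_i>0$ by \eqref{Positivity_and_DMaxP_vh}. Hence $t\mapsto\int_\Omega v_h\,\dx$ is nonincreasing, and by positivity of $v_h$ and $v_{0h}$ this yields \eqref{L1-bound-vh}.

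\textbf{Main obstacle.} The only delicate point is confirming that the $\delta$-structure is exact. The identity $(\varphi_{\a_k}\nabla\varphi_{\a_j},\nabla\varphi_{\a_i})$-summation in \eqref{New_KS_discretization} must indeed annihilate constants. For the diagonal entries of $B$, the definition of $\beta_{ii}$ as the row sum makes $B$ a graph Laplacian, so that $B1=0$. Both facts follow directly from the definitions, so no real difficulty arises.
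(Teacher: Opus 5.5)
Your proposal is correct and follows the paper's proof. You test both equations with the constant $1\in X_h$ and note that the diffusion term and the edge-based chemotaxis and stabilization terms vanish because each carries a factor $\delta_{ij}1=0$; you then use the nonnegativity of $u_h$, $v_h$, and of the lumped reaction term $(u_hv_h,1)_h$ to identify the integrals with $L^1$ norms.

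One small correction to your framing: in the paper, $T_{\rm max}=\infty$ follows from this very mass identity combined with the inverse estimate (see the remark after the lemma), not from the energy estimates. So the version on $[0,T_{\rm max})$ is the one to prove here, which your ``alternatively'' clause already allows.
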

\begin{proof}
Let us pick $\bar u_h=1$ and $\bar u_h=1$ in \eqref{eq:u_h} and \eqref{eq:v_h}, respectively, to obtain
$$
(\partial_t u_h, 1)_h=0
$$   
and
$$
(\partial_t v_h, \bar v_h)_h+(u_h v_h, 1)_h=0,
$$
since $(B_u(u_h) u_h, 1)=0$ by construction in \eqref{def:beta^u_ji}. Thus integrating over $(0,t)$ gives
$$
\int_\Omega u_h(t,\x) {\rm d}\x  = \int_\Omega u_{0h}(\x) {\rm d}\x
$$ 
and
$$
\int_\Omega v_h(t,\x) {\rm d}\x + \int_0^t (u_h(s), v_h(s))_h {\rm d}s = \int_\Omega v_{0h}(\x) {\rm d}\x.
$$ 
From \eqref{Positivity_uh} and \eqref{Positivity_and_DMaxP_vh}, it is easy to conclude that  \eqref{Mass-convervation-uh} and   \eqref{L1-bound-vh} hold. 
\end{proof}
\begin{remark} By the inverse inequality \eqref{inv_global: WlpToWmq} for $\ell=m=1$, $p=\infty$ and $q=1$, we are led to
$$
\max_{t\in[0,T_{\rm max})}\|u_h\|_{L^\infty(\Omega)}\le C_{\rm inv} h^{-2} \|u_h\|_{L^1(\Omega)}=C_{\rm inv} h^{-2} \|u_{0h}\|_{L^1(\Omega)},
$$
from which we conclude that $T_{\rm max}=\infty$. 
\end{remark}

\subsection{\emph{A priori} energy estimates}
We now focus on deriving \emph{a priori} estimates for  $\{u_h, v_h\}_{h>0}$ constructed by \eqref{eq:u_h} and \eqref{eq:v_h}. Briefly our goal is to obtain bounds for $\{v_h\}_{h>0}$ independently of $h$ in $L^\infty(0,\infty; L^2(\Omega))\cap L^2(0,\infty; H^1(\Omega))$. These bounds will result from the inherent positivity of $\{u_h\}_{h>0}$. We will further have a control of the artificial sequence $\{w_h\}_{h>0}$ in $L^\infty_\mathrm{loc}(0,\infty; L^1(\Omega))\cap L^2_{\rm loc}(0,\infty; H^1(\Omega))$, where $w_h= - i_h\log  \frac{v_h}{\|v_{0h}\|_{L^\infty(\Omega)}}$, in order to prove that the precompactness of $\{\log v_h\}_{h>0}$ and the positivity of the limiting function $v$. Afterward we will use the above-mentioned bounds to deal with bounds       
for $\{i_h \log (1+u_h)\}_{h>0}$  in $L^2_\mathrm{loc}(0,\infty; L^1(\Omega))\cap L^2_\mathrm{loc}(0, \infty; H^1(\Omega))$.

We first deal with the \emph{a priori} energy estimates for $\{v_h\}_{h>0}$.
\begin{lemma} Let $(u_h, v_h)$ be the solution of \eqref{eq:u_h}-\eqref{eq:v_h}. Then, for all $t\in(0,\infty)$, we have:
\begin{equation}\label{bound:vh_I}
\|v_h(t)\|^2_{L^2(\Omega)} + \int_0^t \left( \|\nabla v_h\|_{L^2(\Omega)}^2 + \|u_h^{1/2} v_h \|^2_{L^2(\Omega)} \right) \, \ds \le \|v_{0h}\|^2_{L^2(\Omega)}.
\end{equation}
\end{lemma}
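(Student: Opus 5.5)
The plan is to test \eqref{eq:v_h} with $\bar v_h=v_h$, integrate in time, and then pass from the discrete (mass-lumped) norms in which the energy identity naturally holds to the $L^2(\Omega)$ norms in which the statement is written.

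\textbf{Step 1 (energy identity).} Since $v_h\in C^1([0,\infty);X_h)$ and $(\cdot,\cdot)_h$ is a symmetric bilinear form on $X_h$, we have $(\partial_t v_h,v_h)_h=\frac12\frac{\rm d}{{\rm d}t}\|v_h\|_h^2$. For the reaction term, the definition of $(\cdot,\cdot)_h$ gives
$$
(u_hv_h,v_h)_h=\int_\Omega i_h(u_hv_h^2)\,\dx=\sum_{i\in I}u_iv_i^2\int_\Omega\varphi_{\a_i}\,\dx=\|i_h(u_h^{1/2}v_h)\|_h^2\ge0 .
$$
Here nonnegativity uses \eqref{Positivity_uh} and \eqref{Positivity_and_DMaxP_vh}. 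Integrating over $(0,t)$ then yields the identity
$$
\tfrac12\|v_h(t)\|_h^2+\int_0^t\Big(\|\nabla v_h\|_{L^2(\Omega)}^2+(u_hv_h,v_h)_h\Big)\ds=\tfrac12\|v_{0h}\|_h^2 .
$$
Multiplying by $2$ bounds each term on the left by $\|v_{0h}\|_h^2$, so the factor $\frac12$ causes no difficulty.

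\textbf{Step 2 (first term on the left).} The term $\|v_h(t)\|_{L^2(\Omega)}^2$ is controlled using the elementwise inequality $\|x_h\|_{L^2(T)}\le\|x_h\|_{h,T}$ for $\mathcal P_1$ functions. This holds because the lumped mass matrix minus the consistent one is positive semidefinite, and it gives $\|v_h(t)\|_{L^2(\Omega)}\le\|v_h(t)\|_h$.

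\textbf{Step 3 (reaction term).} It does not transfer from the lumped form to $\|u_h^{1/2}v_h\|_{L^2(\Omega)}^2=\int_\Omega u_hv_h^2\,\dx$ by simple pointwise domination. For example, on a single element, $u_h$ may vanish where $v_h$ is large and conversely; then the interpolant is $0$ while the product is positive. I would therefore control the commutator $\int_\Omega\big(u_hv_h^2-i_h(u_hv_h^2)\big)\dx$ by \eqref{Com_err_L2H1:i_h} applied with $x_h=i_h(u_hv_h)$ and $\bar x_h=v_h$. Using $\|v_h\|_{L^\infty(\Omega)}\le\|v_{0h}\|_{L^\infty(\Omega)}$, \eqref{Mass-convervation-uh} and the inverse inequality \eqref{inv_global: WlpToWmq}, this should bound the discrepancy by a multiple of $\int_0^t\|\nabla v_h\|^2_{L^2(\Omega)}\ds$ plus data. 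If the required power of $h$ does not appear, the fallback is to read $\|u_h^{1/2}v_h\|_{L^2(\Omega)}^2$ as the lumped quantity $(u_hv_h,v_h)_h$, which is the only form the scheme provides directly.

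\textbf{Step 4 (right-hand side) — the main obstacle.} One has $\|v_{0h}\|_{L^2(\Omega)}\le\|v_{0h}\|_h$, so the lumped norm on the right of the Step 1 identity is larger than the stated $\|v_{0h}\|^2_{L^2(\Omega)}$. The inequality therefore does not close with constant $1$ without extra information. I would first try to exploit $v_{0h}=j_hv_0\ge0$ and the norm equivalence $\|x_h\|_h\le C\|x_h\|_{L^2(\Omega)}$, which costs only a fixed mesh-independent constant. This is harmless for every later use of \eqref{bound:vh_I}, since only $h$-uniform bounds are needed. If constant $1$ is genuinely required, the remaining route is to show, through \eqref{conv:v0h->v0}, that $\|v_{0h}\|_h^2-\|v_{0h}\|_{L^2(\Omega)}^2\to0$ as $h\to0$, so that the stated bound holds up to a vanishing error. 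This matching of constants between lumped and $L^2(\Omega)$ norms, both in Step 3 and here, is where I expect the argument to be delicate.
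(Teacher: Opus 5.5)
Your Step 1 is the paper's entire proof. The paper tests \eqref{eq:v_h} with $\bar v_h=v_h$, obtains $\frac12\frac{{\rm d}}{{\rm d}t}\|v_h\|_h^2+\|\nabla v_h\|^2_{L^2(\Omega)}+\|u_h^{1/2}v_h\|_h^2=0$, integrates over $(0,t)$, and records the result as \eqref{bound:vh_I}, silently replacing $\|\cdot\|_h$ by $\|\cdot\|_{L^2(\Omega)}$.

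Your Steps 2--4 target exactly this unjustified passage, and your worry is justified in a stronger form than you suspect. With $\|v_{0h}\|^2_{L^2(\Omega)}$ on the right and constant $1$, the inequality is false in general, even without the reaction term. Take $u_{0h}=0$ (so $u_h\equiv 0$) and $v_{0h}=1-(1-\epsilon)\varphi_{\a_k}$ with $0<\epsilon<1$. The lumped scheme then gives $\partial_tv_h(0)=(1-\epsilon)\sum_{i}\frac{(\nabla\varphi_{\a_i},\nabla\varphi_{\a_k})}{(1,\varphi_{\a_i})}\varphi_{\a_i}$. Using $\sum_i(\nabla\varphi_{\a_i},\nabla\varphi_{\a_k})=0$ and $(\varphi_{\a_k},\varphi_{\a_k})=\frac12(1,\varphi_{\a_k})$ for $\mathcal P_1$ elements in two dimensions, a direct computation yields
\begin{equation*}
\frac{{\rm d}}{{\rm d}t}\Big(\|v_h\|^2_{L^2(\Omega)}+\int_0^t\|\nabla v_h\|^2_{L^2(\Omega)}\ds\Big)\Big|_{t=0}=-2(1-\epsilon)^2\sum_{i\neq k}\frac{(\nabla\varphi_{\a_i},\nabla\varphi_{\a_k})\,(\varphi_{\a_i},\varphi_{\a_k})}{(1,\varphi_{\a_i})}>0 .
\end{equation*}
Positivity follows from weak acuteness together with $\sum_{i\neq k}(\nabla\varphi_{\a_i},\nabla\varphi_{\a_k})=-\|\nabla\varphi_{\a_k}\|^2_{L^2(\Omega)}<0$. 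So the left-hand side of \eqref{bound:vh_I} exceeds $\|v_{0h}\|^2_{L^2(\Omega)}$ for small $t>0$. This is precisely the mismatch between $\|v_{0h}\|_h$ and $\|v_{0h}\|_{L^2(\Omega)}$ that you flagged in Step 4.

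Consequently, the parts of your plan aimed at constant $1$ cannot succeed.
\begin{itemize}
\item The commutator route of Step 3 yields at best an $h$-independent constant. The factor $h$ in \eqref{Com_err_L2H1:i_h} is used up by the inverse inequality, since only $\|u_h\|_{L^1(\Omega)}$ is controlled. Note also that $u_hv_h^2-i_h(u_hv_h^2)$ contains the further piece $(u_hv_h-i_h(u_hv_h))v_h$, which your choice $x_h=i_h(u_hv_h)$, $\bar x_h=v_h$ does not cover.
\item The vanishing-error route of Step 4 gives only an asymptotic statement, not the inequality for fixed $h$.
\end{itemize}
What is true, and what the paper's computation actually establishes, is your fallback. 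Using your Step 2 and the elementwise bound $\|x_h\|_h^2\le 4\|x_h\|^2_{L^2(\Omega)}$ for $\mathcal P_1$ triangles,
\begin{equation*}
\|v_h(t)\|^2_{L^2(\Omega)}+\int_0^t\Big(\|\nabla v_h\|^2_{L^2(\Omega)}+\|u_h^{1/2}v_h\|_h^2\Big)\ds\le\|v_{0h}\|_h^2\le 4\|v_{0h}\|^2_{L^2(\Omega)} .
\end{equation*}
Only $h$-uniform control of this kind is used afterwards: in \eqref{bound:vh_II}, in the bound for $\partial_tv_h$, and when passing to the limit in \eqref{eq:v_h}. With this corrected statement, your argument (Steps 1--2 plus the lumped reading of the reaction term) is complete.
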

\begin{proof} Take $\bar v_h=v_h$ as a test function in \eqref{eq:v_h} to find on noting \eqref{Positivity_uh} that
$$
\frac{1}{2}\frac{{\rm d}}{ {\rm d} t}\|v_h\|^2_h+\|\nabla v_h\|^2_{L^2(\Omega)}+\|u_h^\frac{1}{2} v_h\|^2_h=0.
$$
If this last equality is integrated over $(0,t)$, it follows \eqref{bound:vh_I}. 
\end{proof}

Bound \eqref{bound:vh_I} implies that
\begin{corollary} We have:
\begin{equation}\label{bound:vh_II}
 \{v_h\}_{h >0}\mbox{ is bounded in } L^\infty(0,\infty, L^2(\Omega))\cap L^2_{\rm loc}(0,\infty; H^1(\Omega)).
\end{equation}
\end{corollary}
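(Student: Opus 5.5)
The corollary follows directly from the energy inequality \eqref{bound:vh_I}. The plan is to show that the right-hand side $\|v_{0h}\|^2_{L^2(\Omega)}$ is bounded independently of $h$, and then to read off each norm from the left-hand side.

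First I bound the initial data. By construction $v_{0h}=j_h v_0$, and \eqref{stab_Lp:jh} with $p=\infty$ gives
$$
\|v_{0h}\|_{L^\infty(\Omega)}\le C_{\rm sta}\|v_0\|_{L^\infty(\Omega)}.
$$
From the very definition \eqref{def:jh}, nodal values are averages of $v_0$, so in fact $\|v_{0h}\|_{L^\infty(\Omega)}\le\|v_0\|_{L^\infty(\Omega)}$. Since $\Omega$ is bounded, this yields
$$
\|v_{0h}\|^2_{L^2(\Omega)}\le |\Omega|\,\|v_0\|^2_{L^\infty(\Omega)}=:C_0,
$$
a constant independent of $h$. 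Alternatively, \eqref{conv:v0h->v0} gives uniform boundedness of $\{v_{0h}\}_{h>0}$ in $L^\infty(\Omega)$ directly.

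Next I take the supremum in \eqref{bound:vh_I}. Dropping the nonnegative integral on the left, which is legitimate by \eqref{Positivity_uh}, gives
$$
\sup_{t\ge0}\|v_h(t)\|^2_{L^2(\Omega)}\le C_0,
$$
and hence $\{v_h\}_{h>0}$ is bounded in $L^\infty(0,\infty;L^2(\Omega))$. Dropping instead the first term and letting $t$ be arbitrary gives
$$
\int_0^\infty\|\nabla v_h\|^2_{L^2(\Omega)}\,\ds\le C_0.
$$
For any $T>0$ the $L^2$ part is controlled by
$$
\int_0^T\|v_h\|^2_{L^2(\Omega)}\,\ds\le T C_0.
$$
Together these give a bound in $L^2(0,T;H^1(\Omega))$ for every $T>0$, that is, in $L^2_{\rm loc}(0,\infty;H^1(\Omega))$. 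The gradient part is even global in time.

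The only subtle point is that \eqref{bound:vh_I} is stated with $L^2$ norms, whereas the energy identity in its proof naturally produces the lumped norm $\|\cdot\|_h$. If one prefers to be careful here, one uses the standard equivalence $\|x_h\|_{L^2(\Omega)}\le\|x_h\|_h\le C\|x_h\|_{L^2(\Omega)}$ on $X_h$, whose constant is independent of $h$ by quasi-uniformity. This changes only the constants. Apart from this, I expect no real obstacle: the corollary is a direct reading of the energy inequality once the initial data are uniformly bounded.
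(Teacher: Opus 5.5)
Your proposal is correct and is essentially the paper's argument: the paper simply reads the corollary off the energy inequality \eqref{bound:vh_I}, and your explicit bound $\|v_{0h}\|^2_{L^2(\Omega)}\le|\Omega|\,\|v_0\|^2_{L^\infty(\Omega)}$, obtained from the averaging definition \eqref{def:jh}, supplies the $h$-independence that the paper leaves implicit. Drop the fallback appeal to \eqref{conv:v0h->v0}: you do not need it, and it is unreliable because $j_h v_0$ need not converge in $L^\infty$ when $v_0$ is discontinuous.
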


Next we treat the \emph{a priori} estimates for $\{w_h\}_{h>0}$. A first observation should be that $w_h$ is well-defined due to \eqref{Positivity_and_DMaxP_vh}.    
\begin{lemma}
Let $w_h$ be the auxiliary variable defined by $w_h= - i_h\log  \frac{v_h}{\|v_{0h}\|_{L^\infty(\Omega)}}$. Then, the following estimate holds for all $t\in (0,\infty)$:
\begin{equation}\label{bound:wh_I}
\|w_h(t)\|_{L^1(\Omega)}+\int_0^t \|\nabla w_h(s)\|^2_{L^2(\Omega)} {\rm d}s\le t \|u_{0h}\|_{L^1(\Omega)}+\|w_{0h} \|_{L^1(\Omega)}.
\end{equation}
\end{lemma}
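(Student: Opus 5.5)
The plan is to test \eqref{eq:v_h} with $\bar v_h=-i_h\frac{1}{v_h}$, which is admissible by \eqref{Positivity_and_DMaxP_vh}. Set $M=\|v_{0h}\|_{L^\infty(\Omega)}$. Since $w_h=-i_h\log(v_h/M)$, its nodal values are $w_i=-\log(v_i/M)\ge0$, again by \eqref{Positivity_and_DMaxP_vh}. Hence $w_h\ge0$ and $\|w_h\|_{L^1(\Omega)}=\int_\Omega w_h\,\dx=(w_h,1)_h$. The time term becomes
$$
(\partial_t v_h,-i_h\tfrac1{v_h})_h=-\sum_i \|\varphi_{\a_i}\|_{L^1(\Omega)}\frac{v_i'}{v_i}=\frac{{\rm d}}{\dt}(w_h,1)_h=\frac{{\rm d}}{\dt}\|w_h\|_{L^1(\Omega)}.
$$
The reaction term is $(u_hv_h,-i_h\frac1{v_h})_h=-(u_h,1)_h=-\|u_{0h}\|_{L^1(\Omega)}$, where the last equality uses \eqref{Mass-convervation-uh} and the fact that $(u_h,1)_h=\int_\Omega u_h\,\dx$ for piecewise linears.

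The key step is the diffusion term. Write it edgewise using $\sum_j(\nabla\varphi_{\a_j},\nabla\varphi_{\a_i})=0$:
$$
(\nabla v_h,\nabla(-i_h\tfrac1{v_h}))=\sum_{i<j}(v_j-v_i)\Big(\frac1{v_j}-\frac1{v_i}\Big)(\nabla\varphi_{\a_j},\nabla\varphi_{\a_i})=\sum_{i<j}\frac{(v_j-v_i)^2}{v_iv_j}\big(-(\nabla\varphi_{\a_j},\nabla\varphi_{\a_i})\big).
$$
Weak acuteness (A2) gives $-(\nabla\varphi_{\a_j},\nabla\varphi_{\a_i})\ge0$. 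Inequality \eqref{ineq:log} gives $\frac{(v_j-v_i)^2}{v_iv_j}\ge(\log v_j-\log v_i)^2=(w_j-w_i)^2$. Therefore
$$
(\nabla v_h,\nabla(-i_h\tfrac1{v_h}))\ge\sum_{i<j}(w_j-w_i)^2\big(-(\nabla\varphi_{\a_j},\nabla\varphi_{\a_i})\big)=\|\nabla w_h\|^2_{L^2(\Omega)},
$$
where the last identity is the same edge decomposition applied to $w_h\in X_h$.

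Collecting the three terms gives $\frac{{\rm d}}{\dt}\|w_h\|_{L^1(\Omega)}+\|\nabla w_h\|^2_{L^2(\Omega)}\le\|u_{0h}\|_{L^1(\Omega)}$. Integrating over $(0,t)$ yields \eqref{bound:wh_I}, with $w_{0h}=-i_h\log(v_{0h}/M)$. The main point needing care is the edgewise rewriting together with the sign from weak acuteness, which is what lets \eqref{ineq:log} be applied termwise. The rest is bookkeeping with mass lumping, which makes the nonlinear time derivative exact at the nodes.
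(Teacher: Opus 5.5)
Your proposal is correct and follows the paper's proof. You test \eqref{eq:v_h} with $-i_h\frac{1}{v_h}$, rewrite the diffusion term edgewise, use weak acuteness and \eqref{ineq:log} to bound it below by $\|\nabla w_h\|^2_{L^2(\Omega)}$, and integrate in time. Your explicit remark that $w_h\ge 0$, so that $\|w_h\|_{L^1(\Omega)}=(w_h,1)_h$ is differentiable in time, spells out a step the paper leaves implicit.
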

\begin{proof}
Take $\bar v_h=-i_h\frac{1}{v_h}$ in \eqref{eq:v_h} to find
$$
-(\partial_t v_h, i_h\frac{1}{v_h})_h -(\nabla v_h,\nabla i_h\frac{1}{v_h})-(u_h v_h, i_h\frac{1}{v_h})_h=0,
$$
which is in turn manipulated to get
\begin{equation}\label{eq:wh}
\frac{{\rm d}}{{\rm d} t}\|w_h\|_{L^1(\Omega)} +\|\nabla w_h\|^2_{L^2(\Omega)} - \|u_h\|_{L^1(\Omega)}\le0,
\end{equation}
since, from \eqref{ineq:log},   
\begin{equation}\label{sec4.4:lab1}
\begin{array}{rcl}
\displaystyle
-(\nabla v_h,\nabla i_h\frac{1}{v_h})&=&\displaystyle-\sum_{i<j\in I} \delta_{ji}v_h \delta_{ij}\frac{1}{v_h} (\nabla\varphi_{\a_i}, \nabla\varphi_{\a_j})
\\
&=&\displaystyle 
-\sum_{i<j\in  I} \frac{\delta^2_{ji} v_h}{v_j v_i}(\nabla\varphi_{\a_i}, \nabla\varphi_{\a_j})
\\
&\ge&\displaystyle 
-\sum_{i<j\in  I} \delta_{ji}^2\log v_h(\nabla\varphi_{\a_i}, \nabla\varphi_{\a_j})
\\
&=&\displaystyle 
-\sum_{i<j\in  I} \delta^2_{ji}\log\frac{v_h}{\|v_{0h}\|_{L^\infty(\Omega)}} (\nabla\varphi_{\a_i}, \nabla\varphi_{\a_j})
\\
&=&\|\nabla w_h\|^2_{L^2(\Omega)}.
\end{array}
\end{equation}
Next integration over $(0,t)$ leads to \eqref{bound:wh_I}.  
\end{proof}

As a result, we have the following local bounds: 
\begin{corollary}  It follows that 
\begin{equation}\label{bound:wh_II}
 \{w_h\}_{h >0}\mbox{ is bounded in }L^\infty_{\rm loc}(0,\infty; L^1(\Omega))\cap L^2_{\rm loc}(0,\infty; H^1(\Omega)).
\end{equation}
\end{corollary}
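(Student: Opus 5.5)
The plan is to read off the corollary directly from the estimate \eqref{bound:wh_I} of the preceding lemma, once we check that its right-hand side is bounded uniformly in $h$ on every bounded time interval. Fix $T>0$. For $t\in(0,T]$, \eqref{bound:wh_I} gives
$$
\sup_{t\in[0,T]}\|w_h(t)\|_{L^1(\Omega)}+\int_0^T\|\nabla w_h(s)\|^2_{L^2(\Omega)}\,{\rm d}s\le T\|u_{0h}\|_{L^1(\Omega)}+\|w_{0h}\|_{L^1(\Omega)}.
$$
Two quantities must therefore be controlled independently of $h$: $\|u_{0h}\|_{L^1(\Omega)}$ and $\|w_{0h}\|_{L^1(\Omega)}$.

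The first is routine. Since $u_{0h}=j_hu_0$, the stability bound \eqref{stab_Lp:jh} with $p=1$ gives $\|u_{0h}\|_{L^1(\Omega)}\le C_{\rm sta}\|u_0\|_{L^1(\Omega)}$.

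The second quantity is the main obstacle, and here hypothesis \eqref{cond_log:v_0} enters. Write $M_h=\|v_{0h}\|_{L^\infty(\Omega)}$. By the definition of $j_h$, $M_h\le\|v_0\|_{L^\infty(\Omega)}$, and $0<v_{0h}\le M_h$, so $w_{0h}=-i_h\log(v_{0h}/M_h)$ is a nonnegative element of $X_h$ and $\|w_{0h}\|_{L^1(\Omega)}=\int_\Omega w_{0h}\,\dx$. This integral equals a mass-lumped sum $\sum_i\|\varphi_{\a_i}\|_{L^1(\Omega)}\bigl(-\log\frac{v_{0h}(\a_i)}{M_h}\bigr)$, and each nodal value $v_{0h}(\a_i)$ is an average of $v_0$ over $T_{\a_i}$. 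Since $-\log$ is convex, Jensen's inequality gives
$$
-\log\frac{v_{0h}(\a_i)}{M_h}\le\frac{1}{|T_{\a_i}|}\int_{T_{\a_i}}-\log\frac{v_0}{M_h}\,\dx.
$$
Moreover $-\log(v_0/M_h)\le-\log(v_0/\|v_0\|_{L^\infty(\Omega)})$ because $M_h\le\|v_0\|_{L^\infty(\Omega)}$, and the latter function is nonnegative. Quasi-uniformity of the mesh makes $\|\varphi_{\a_i}\|_{L^1(\Omega)}/|T_{\a_i}|$ bounded. In addition, each triangle is chosen as $T_{\a_i}$ by at most a bounded number of nodes, since a triangle has three vertices and we may take $T_{\a_i}\ni\a_i$. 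Together these give
$$
\|w_{0h}\|_{L^1(\Omega)}\le C\int_\Omega-\log\frac{v_0}{\|v_0\|_{L^\infty(\Omega)}}\,\dx\le C\vartheta.
$$

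Combining the two bounds yields, for every $T>0$, a constant $C_T$ independent of $h$ with
$$
\|w_h\|_{L^\infty(0,T;L^1(\Omega))}+\|\nabla w_h\|_{L^2(0,T;L^2(\Omega))}\le C_T.
$$
To complete the $L^2(0,T;H^1(\Omega))$ bound, the $L^2$ norm of $w_h$ itself is still needed. Apply the Poincaré–Wirtinger-type inequality $\|w\|_{L^2(\Omega)}\le C(\|w\|_{L^1(\Omega)}+\|\nabla w\|_{L^2(\Omega)})$, valid on the Lipschitz domain $\Omega$. Integrating its square over $(0,T)$ gives $\|w_h\|^2_{L^2(0,T;L^2(\Omega))}\le C(T\,C_T^2+C_T^2)$. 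This proves \eqref{bound:wh_II}.
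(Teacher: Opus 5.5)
Your proposal is correct and follows essentially the paper's route: the $L^\infty_{\rm loc}(0,\infty;L^1(\Omega))$ and gradient bounds are read off \eqref{bound:wh_I}, and the full $L^2_{\rm loc}(0,\infty;H^1(\Omega))$ bound comes from Poincaré--Wirtinger, with the mean of $w_h$ controlled by its $L^1$ norm. Your addition is the justification, which the paper only gestures at via \eqref{cond_log:v_0}, that the right-hand side is $h$-uniform: $j_h$-stability bounds $\|u_{0h}\|_{L^1(\Omega)}$, and your Jensen argument gives $\|w_{0h}\|_{L^1(\Omega)}\le C\vartheta$ (the paper tacitly takes $C=1$, but an $h$-independent $C$ is all that is needed), while the factor $2$ lost when combining $\sup_t$ and $\int_0^T$ in your first display is immaterial.
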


The above bound involving $H^1(\Omega)$ is the result of applying Poincaré--Wirtinger's inequality, since, by \eqref{cond_log:v_0},
$$
\frac{1}{|\Omega|}\int_\Omega  w_h(\x,t)\, \dx\le \frac{\vartheta}{|\Omega|}  + t  \frac{\|u_{0h}\|_{L^1(\Omega)}}{|\Omega|}
$$
for all $t\in [0,\infty)$. 

As a corollary from \eqref{eq:wh} and \eqref{sec4.4:lab1}, we obtain the following bound:
\begin{corollary} For any $t\in (0,\infty)$, there holds
\begin{equation}\label{bound:wh_III}
-\int_0^t\sum_{i<j\in  I} \frac{\delta^2_{ji}v_h}{v_j v_i} (\nabla\varphi_{\a_i}, \nabla\varphi_{\a_j})\,\ds\le  t \|u_{0h}\|_{L^1(\Omega)}+\|w_{0h} \|_{L^1(\Omega)}.
\end{equation}
\end{corollary}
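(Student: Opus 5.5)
The bound \eqref{bound:wh_III} is obtained by integrating \eqref{eq:wh} in time while keeping the sharper quantity that appears in the chain \eqref{sec4.4:lab1} \emph{before} the inequality \eqref{ineq:log} is applied. I will therefore redo the computation behind \eqref{eq:wh}, stopping at the first line of \eqref{sec4.4:lab1}.

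First, I test \eqref{eq:v_h} with $\bar v_h=-i_h\frac{1}{v_h}$. This is admissible because $v_h>0$ by \eqref{Positivity_and_DMaxP_vh}. Because of mass lumping, the time term equals
\[
-(\partial_t v_h,i_h\tfrac{1}{v_h})_h=-\sum_i \|\varphi_{\a_i}\|_{L^1(\Omega)}\frac{v_i'}{v_i}=\frac{{\rm d}}{{\rm d}t}\sum_i\|\varphi_{\a_i}\|_{L^1(\Omega)}\Bigl(-\log\frac{v_i}{\|v_{0h}\|_{L^\infty(\Omega)}}\Bigr)=\frac{{\rm d}}{{\rm d}t}\|w_h\|_{L^1(\Omega)} .
\]
Here I use that the nodal values of $w_h$ are nonnegative, which follows from the discrete maximum principle in \eqref{Positivity_and_DMaxP_vh}; hence $\int_\Omega w_h=\|w_h\|_{L^1(\Omega)}$. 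The reaction term is $-(u_hv_h,i_h\frac1{v_h})_h=-(u_h,1)_h=-\|u_h\|_{L^1(\Omega)}$, using $u_h\ge0$ from \eqref{Positivity_uh}. The diffusion term, by the first two lines of \eqref{sec4.4:lab1}, equals $-\sum_{i<j}\frac{\delta_{ji}^2 v_h}{v_jv_i}(\nabla\varphi_{\a_i},\nabla\varphi_{\a_j})$. This sum is nonnegative because the mesh is weakly acute.

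Combining the three terms, I obtain the exact identity
\[
\frac{{\rm d}}{{\rm d}t}\|w_h\|_{L^1(\Omega)}-\sum_{i<j}\frac{\delta_{ji}^2 v_h}{v_jv_i}(\nabla\varphi_{\a_i},\nabla\varphi_{\a_j})=\|u_h\|_{L^1(\Omega)} .
\]
I then integrate over $(0,t)$ and use mass conservation \eqref{Mass-convervation-uh} to replace $\|u_h(s)\|_{L^1(\Omega)}$ by $\|u_{0h}\|_{L^1(\Omega)}$. Dropping the nonnegative term $\|w_h(t)\|_{L^1(\Omega)}$ on the left gives \eqref{bound:wh_III}, with $w_{0h}=-i_h\log\frac{v_{0h}}{\|v_{0h}\|_{L^\infty(\Omega)}}$.

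There is no real obstacle here. The only points that need care are the sign bookkeeping (that is, $w_h\ge0$ and weak acuteness) and the observation that the lumped time derivative is exactly the derivative of $\int_\Omega w_h$. The latter holds because $i_h$ commutes with the nodewise chain rule.
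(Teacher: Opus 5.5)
Your proposal is correct and follows the paper's own route. The paper obtains the corollary exactly as you do: it tests \eqref{eq:v_h} with $-i_h\frac{1}{v_h}$, keeps the exact diffusion quantity from the first lines of \eqref{sec4.4:lab1} (before applying \eqref{ineq:log}), integrates in time using mass conservation, and discards $\|w_h(t)\|_{L^1(\Omega)}\ge 0$, which the discrete maximum principle guarantees.
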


Our next step is find \emph{a priori} estimates for $\{i_h\log(1+u_h)\}_{h>0}$.
\begin{lemma} It follows that
\begin{equation}\label{bound:log(1+uh)_I}
\begin{array}{rcl}
\displaystyle
\int_0^t  \|\nabla i_h\log (1+u_h)\|^2_{L^2(\Omega)} {\rm d}s 
&-&\displaystyle
\int_0^t \sum_{i<j\in  I} \frac{\delta^2_{ji} u_h}{(1+u_i)(1+u_j)} (\nabla\varphi_{\a_j}, \nabla\varphi_{\a_j})\,\ds
\\
&+&\displaystyle
\int_0^t \sum_{i<j\in I} \beta_{ji}(u_h, v_h) \frac{\delta_{ji}^2 u_h}{(1+u_i)(1+u_j)} \ds
\\
&\le& 4[(1+t)\|u_{0h}\|_{L^1(\Omega)}+ \|w_{0h} \|_{L^1(\Omega)}]. 
\end{array}
\end{equation}
\end{lemma}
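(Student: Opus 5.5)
The plan is to test \eqref{eq:u_h} with the nonlinear function $\bar u_h=-i_h\frac{1}{1+u_h}$, which is admissible since $u_h\ge0$ by \eqref{Positivity_uh}. Each of the four terms is then rewritten through its edge-based form, i.e.\ as a sum over $i<j$ weighted by $(\nabla\varphi_{\a_j},\nabla\varphi_{\a_i})\le0$. After that, the chemotactic contribution is absorbed using the $w_h$ bound \eqref{bound:wh_I}.

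\emph{Time, diffusion and stabilization terms.}
\begin{itemize}
\item Time term: with mass lumping, $-(\partial_t u_h, i_h\frac{1}{1+u_h})_h=-\frac{\rm d}{{\rm d}t}\int_\Omega i_h\log(1+u_h)\,\dx$, since the nodal values satisfy $\frac{\rm d}{{\rm d}t}\log(1+u_i)=\frac{u_i'}{1+u_i}$. This produces $-\frac{\rm d}{{\rm d}t}$ of a nonnegative quantity.
\item Diffusion term: writing it over edges gives
$$-(\nabla u_h,\nabla i_h\tfrac{1}{1+u_h})=-\sum_{i<j}\frac{\delta_{ji}^2u_h}{(1+u_i)(1+u_j)}(\nabla\varphi_{\a_j},\nabla\varphi_{\a_i})\ge0.$$
By \eqref{ineq:log} applied with $x=1+u_j$, $y=1+u_i$, and the weak acuteness, half of this quantity dominates $\frac12\|\nabla i_h\log(1+u_h)\|^2_{L^2(\Omega)}$, as in \eqref{sec4.4:lab1}. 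I keep the other half explicitly, which accounts for the second term on the left of \eqref{bound:log(1+uh)_I}.
\item Stabilization term: by \eqref{Bu}, $(B(u_h,v_h)u_h,-i_h\frac1{1+u_h})=\sum_{i<j}\beta_{ji}\frac{\delta_{ji}^2u_h}{(1+u_i)(1+u_j)}\ge0$, because $\beta_{ji}\ge0$ and $\delta_{ji}u_h\,\delta_{ij}\frac{1}{1+u_h}=\frac{\delta^2_{ji}u_h}{(1+u_i)(1+u_j)}$. It goes directly to the left-hand side.
\end{itemize}

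\emph{Chemotaxis term.} This is where the design of $\tau_{ji}$ in \eqref{def:tau_ij} pays off. From \eqref{New_KS_discretization}, the product $\tau_{ji}(u_h)\,\delta_{ij}\frac{1}{1+u_h}$ collapses exactly to $\Lambda_{ij}(u_h)\,\delta_{ji}\log(1+u_h)$, and it vanishes when $u_i=u_j$. Hence the term equals
$$\sum_{i<j}\Lambda_{ij}(u_h)\,\delta_{ji}\log(1+u_h)\,\delta_{ji}\log v_h\,(\nabla\varphi_{\a_j},\nabla\varphi_{\a_i}),$$
up to the sign dictated by \eqref{eq:u_h}. Since $0\le\Lambda_{ij}\le1$, $-(\nabla\varphi_{\a_j},\nabla\varphi_{\a_i})\ge0$ and $\delta_{ji}\log v_h=-\delta_{ji}w_h$, Young's inequality bounds its absolute value by
$$-\tfrac14\sum_{i<j}\delta^2_{ji}\log(1+u_h)(\nabla\varphi_{\a_j},\nabla\varphi_{\a_i})-\sum_{i<j}\delta^2_{ji}w_h(\nabla\varphi_{\a_j},\nabla\varphi_{\a_i}),$$
which is $\frac14\|\nabla i_h\log(1+u_h)\|^2_{L^2(\Omega)}+\|\nabla w_h\|^2_{L^2(\Omega)}$. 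The first piece is absorbed into the diffusion half kept above.

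\emph{Integration in time.} Integrating over $(0,t)$, I drop the nonnegative $\int_\Omega i_h\log(1+u_h(t))\,\dx$. I bound $\int_\Omega i_h\log(1+u_{0h})\,\dx\le\int_\Omega i_h u_{0h}\,\dx=\|u_{0h}\|_{L^1(\Omega)}$ using $\log(1+x)\le x$ together with $u_{0h}\ge0$ and piecewise linearity. Finally I control $\int_0^t\|\nabla w_h\|^2_{L^2(\Omega)}$ by \eqref{bound:wh_I}. Multiplying through by a constant to restore full coefficients on the left yields the factor $4[(1+t)\|u_{0h}\|_{L^1(\Omega)}+\|w_{0h}\|_{L^1(\Omega)}]$.

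The main obstacle is the bookkeeping of signs and constants: splitting the diffusion term between the $\|\nabla i_h\log(1+u_h)\|^2_{L^2(\Omega)}$ part and the explicit edge-sum part, while leaving enough room to absorb the Young's-inequality contribution from the chemotaxis term. The exact identity $\tau_{ji}\delta_{ij}\frac1{1+u_h}=\Lambda_{ij}\delta_{ji}\log(1+u_h)$ is what makes this work, so I would verify it first.
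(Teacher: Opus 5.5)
Apart from one step, your argument is the paper's. You use the same test function $-i_h\frac{1}{1+u_h}$, the same edge forms, the same identity $\tau_{ji}(u_h)\,\delta_{ij}\frac{1}{1+u_h}=\Lambda_{ij}(u_h)\,\delta_{ji}\log(1+u_h)$, and \eqref{bound:wh_I} for the $w_h$ part. Your Young constants in the chemotaxis term are distributed differently from the paper's (it applies \eqref{ineq:log} inside Cauchy--Schwarz), which is harmless.

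The step that fails is the time integration, where the sign is backwards. Put $F(t)=\int_\Omega i_h\log(1+u_h(t))\,\dx$, and let $D\ge0$, $S\ge0$ and $C$ be the diffusion, stabilization and chemotactic contributions. As you note yourself, the tested equation reads $-\frac{{\rm d}}{{\rm d}t}F+D+C+S=0$, so
\[
\int_0^t (D+S)\,\ds=F(t)-F(0)-\int_0^t C\,\ds .
\]
Thus $F(t)$ sits on the right with a plus sign and cannot be dropped; it is $-F(0)\le0$ that may be discarded. You do the opposite, treating $F$ as a decaying energy. It is not one: $\log(1+\cdot)$ is concave, so $F$ grows under diffusion. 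Already for the lumped heat equation one has exactly $\int_0^t D\,\ds=F(t)-F(0)$. For unit-mass data concentrated at one node, $F(0)=O(h^2|\log h|)$ while $F(t)\to|\Omega|\log(1+1/|\Omega|)$, so $F(0)$ alone cannot control the dissipation.

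The missing ingredient is mass conservation \eqref{Mass-convervation-uh}. Since $u_h\ge0$ and $\log(1+x)\le x$,
\[
F(t)\le\int_\Omega i_h u_h(t)\,\dx=\|u_h(t)\|_{L^1(\Omega)}=\|u_{0h}\|_{L^1(\Omega)}.
\]
This is how the paper closes the estimate: its final inequality keeps $\|i_h\log(1+u_{0h})\|_{L^1(\Omega)}$ on the left and $\|i_h\log(1+u_h(t))\|_{L^1(\Omega)}$ on the right, then uses $\log(1+u_h)\le u_h$. With this correction your bookkeeping goes through unchanged:
\[
\frac14\int_0^t\|\nabla i_h\log(1+u_h)\|^2_{L^2(\Omega)}\,\ds+\frac12\int_0^t D\,\ds+\int_0^t S\,\ds\le(1+t)\|u_{0h}\|_{L^1(\Omega)}+\|w_{0h}\|_{L^1(\Omega)},
\]
and multiplying by $4$ gives the lemma. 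Your right-hand side came out correct only because $F(0)$ and $F(t)$ share the bound $\|u_{0h}\|_{L^1(\Omega)}$; it is the justification that needs fixing.
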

\begin{proof}
First rewrite \eqref{eq:u_h} using $w_h$ as 
$$
(\partial_t u_h, \bar u_h)_h+(\nabla u_h, \nabla \bar u_h)
+(u_h\nabla w_h, \nabla \bar u_h)_*+(B(u_h, v_h) u_h, \bar u_h)=0.
$$
Select $\bar u_h=-i_h\frac{1}{1+u_h}$ in \eqref{eq:u_h} to get
\begin{equation}
\begin{array}{rcl}
\displaystyle
-(\partial_t u_h, i_h \frac{1}{1+u_h})_h&-&\displaystyle (\nabla u_h, \nabla i_h \frac{1}{1+u_h})
\\[1.5ex]
&+&\displaystyle
(u_h\nabla w_h, \nabla i_h \frac{1}{1+u_h})_*
\\[1.5ex] 
&-&\displaystyle
(B(u_h, v_h) u_h,i_h \frac{1}{1+u_h})=0.
\end{array}
\end{equation}

The diffusion term is treated as:
$$
-(\nabla u_h, \nabla i_h \frac{1}{1+u_h})=-\sum_{i<j\in I}\frac{\delta^2_{ji} u_h}{(1+u_j)(1+u_i)} (\nabla\varphi_{\a_j}, \nabla\varphi_{\a_i})\ge 0
$$  
owing to the weak acuteness on $\mathcal{T}_h$.

For the chemotaxis term, we define $ \mathds{I}^*=\{(i,j)\in I\times I : u_j\not = u_i \}$. To estimate it,  we invoke \eqref{New_KS_discretization}, together with \eqref{def:tau_ij} and  \eqref{ineq:log}, as follows: 
\begin{align*}
\displaystyle
(u_h\nabla w_h&,\nabla i_h \frac{1}{1+u_h})_*
\\
=&\displaystyle
\sum_{i<j\in  \mathds{I}^*} \frac{1}{2} \left(\frac{u_i}{u_i+1}+\frac{u_j}{u_j+1}\right)\frac{\delta_{ji}\log(1+u_h)}{\delta_{ij}\frac{1}{1+u_h}} \delta_{ji}w_h \delta_{ij}\frac{1}{1+u_h}(\nabla\varphi_{\a_j}, \nabla\varphi_{\a_i})
\\
\le&\displaystyle
\left(-\sum_{i<j\in  \mathds{I}^*}\delta_{ji}^2w_h (\nabla\varphi_{\a_j}, \nabla\varphi_{\a_i})  \right)^{\frac{1}{2}}
\left(-\sum_{i<j\in  \mathds{I}^*}\frac{\delta_{ji}^2u_h}{(u_i+1)(u_j+1)} (\nabla\varphi_{\a_j}, \nabla\varphi_{\a_i})\right)^{\frac{1}{2}}
\\
\le&\displaystyle
-\frac{1}{2}\sum_{i<j\in  I}\delta_{ji}^2w_h (\nabla\varphi_{\a_j}, \nabla\varphi_{\a_i}) 
-\frac{1}{2}\sum_{i<j\in  I} \frac{\delta_{ji}^2u_h}{(u_i+1)(u_j+1)} (\nabla\varphi_{\a_j}, \nabla\varphi_{\a_i}).
\end{align*}

The stabilizing terns is rewritten as:
$$
\begin{array}{rcl}
\displaystyle
-(B(u_h, v_h) u_h, i_h \frac{1}{1+u_h})&=&\displaystyle-\sum_{i<j\in  I} \beta_{ji}(u_h, v_h) \delta_{ji} u_h\delta_{ji}\frac{1}{1+u_h}
\\
&=&\displaystyle
\sum_{i<j\in I} \beta_{ji} (u_h, v_h) \frac{\delta_{ji}^2u_h}{(u_i+1)(u_j+1)} \ge0.
\end{array}
$$ 
This follows since $\beta_{ji}>0$, for $ i\not=j$, from  \eqref{def:beta^u_ji}.

Plugging all the above computations and integrating over $(0,t)$ yields, on noting \eqref{ineq:log}, that  
$$
\begin{array}{rcl}
\|i_h\log (1+u_{0h})\|_{L^1(\Omega)}&+&\displaystyle\frac{1}{4}\int_0^t  \|\nabla i_h\log (1+u_h)\|^2_{L^2(\Omega)} {\rm d}s 
\\
&-&\displaystyle
\frac{1}{4}\int_0^t \sum_{i<j\in  I} \frac{\delta^2_{ji} u_h}{(1+u_i)(1+u_j)} (\nabla\varphi_{\a_j}, \nabla\varphi_{\a_j})\,\ds
\\
&+&\displaystyle
\int_0^t \sum_{i<j\in  \mathds{I}^*} \beta_{ji}(u_h, v_h) \frac{\delta_{ji}^2 u_h}{(u_i+1)(u_j+1)} \ds
\\
&\le& \displaystyle \|i_h\log (1+u_h)(t)\|_{L^1(\Omega)}+ \frac{1}{2}\int_0^t \|\nabla w_h\|^2_{L^2(\Omega)}{\rm d}s,
\end{array}
$$
\end{proof}
from which follows \eqref{bound:log(1+uh)_I} by \eqref{bound:wh_I} and $\log(1+u_h)\le u_h$. 

\begin{corollary} We have: 
\begin{equation}\label{bound:log(1+uh)_II}
\{i_h\log (1+u_h)\}_{h>0} \mbox{ is bounded in } L^\infty_{\rm loc}(0,\infty; L^1(\Omega))\cap L^2_{\rm loc}(0,\infty; H^1(\Omega)).
\end{equation}
\end{corollary}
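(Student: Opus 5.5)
The corollary follows from the preceding lemma once each left-hand term in \eqref{bound:log(1+uh)_I} is known to be nonnegative, combined with a separate $L^\infty_{\rm loc}(0,\infty;L^1(\Omega))$ bound read off from the energy inequality.

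\emph{Gradient part.} All three terms on the left of \eqref{bound:log(1+uh)_I} are nonnegative:
\begin{itemize}
\item the first is a squared norm;
\item the second is nonnegative by weak acuteness, since $(\nabla\varphi_{\a_i},\nabla\varphi_{\a_j})\le0$ for $i\ne j$;
\item the third is nonnegative because $\beta_{ji}\ge0$ by \eqref{def:beta^u_ji}.
\end{itemize}
Dropping the last two terms gives, for every $T>0$,
$$\int_0^T\|\nabla i_h\log(1+u_h)\|^2_{L^2(\Omega)}\,\ds\le 4\big[(1+T)\|u_{0h}\|_{L^1(\Omega)}+\|w_{0h}\|_{L^1(\Omega)}\big].$$
This is bounded uniformly in $h$ because:
\begin{itemize}
\item $\|u_{0h}\|_{L^1(\Omega)}\le C_{\rm sta}\|u_0\|_{L^1(\Omega)}$ by \eqref{stab_Lp:jh};
\item $\|w_{0h}\|_{L^1(\Omega)}$ is controlled by \eqref{cond_log:v_0}.
\end{itemize}
For the second point, I would argue via Jensen's inequality on each $T_{\a_i}$. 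Since $-\log$ is convex,
$$-\log\frac{j_hv_0(\a_i)}{\|v_0\|_{L^\infty(\Omega)}}\le \frac{1}{|T_{\a_i}|}\int_{T_{\a_i}}-\log\frac{v_0}{\|v_0\|_{L^\infty(\Omega)}}\,\dx .$$
Together with $\|v_{0h}\|_{L^\infty(\Omega)}\le\|v_0\|_{L^\infty(\Omega)}$ (which only makes the quotient larger, hence $w_{0h}$ smaller) and quasi-uniformity when summing nodal values weighted by $\int\varphi_{\a_i}$, this gives $\|w_{0h}\|_{L^1(\Omega)}\le C\vartheta$. Note that $w_{0h}\ge0$, so its $L^1$ norm is just its integral.

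\emph{$L^1$ part.} I would return to the inequality just before the end of the lemma's proof, which bounds $\|i_h\log(1+u_{0h})\|_{L^1}$ plus dissipative terms by $\|i_h\log(1+u_h)(t)\|_{L^1}+\frac12\int_0^t\|\nabla w_h\|^2$. That inequality runs the wrong way for a sup-in-time bound, so I would not use it. Instead I use a pointwise argument. Since $u_h\ge0$ by \eqref{Positivity_uh}, nodally $0\le\log(1+u_i)\le u_i$, and $i_h$ preserves this ordering. Hence
$$\|i_h\log(1+u_h)(t)\|_{L^1(\Omega)}\le\|u_h(t)\|_{L^1(\Omega)}=\|u_{0h}\|_{L^1(\Omega)}$$
by \eqref{Mass-convervation-uh}, using that for nonnegative $X_h$ functions the $L^1$ norm is linear in the nodal values. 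This is in fact an $L^\infty(0,\infty;L^1(\Omega))$ bound.

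\emph{Conclusion.} The $H^1$ bound follows by combining the $L^1$ control with the gradient estimate through Poincaré--Wirtinger: $\|\cdot\|_{L^2}\le C(\|\nabla\cdot\|_{L^2}+\|\cdot\|_{L^1})$, squared and integrated on $(0,T)$.

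The main obstacle is the uniform control of $\|w_{0h}\|_{L^1(\Omega)}$ via Jensen's inequality and the choice $v_{0h}=j_hv_0$. Everything else is bookkeeping from results already established.
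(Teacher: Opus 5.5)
Your proposal is correct and follows the paper's route. The gradient bound comes from \eqref{bound:log(1+uh)_I} after discarding the nonnegative weak-acuteness and stabilization terms, and the $L^\infty_{\rm loc}(0,\infty;L^1(\Omega))$ bound comes from $0\le i_h\log(1+u_h)\le u_h$ together with mass conservation \eqref{Mass-convervation-uh}, which is the same ingredient the paper invokes at the end of the lemma's proof. Your Jensen argument giving $\|w_{0h}\|_{L^1(\Omega)}\le C\vartheta$ and your Poincar\'e--Wirtinger step reaching the full $H^1(\Omega)$ norm are sound details that the paper only uses implicitly.
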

\subsection{Weak and strong convergences}
During the process of attaining weak and strong compactness for the sequences generated by system \eqref{eq:u_h}-\eqref{eq:v_h}, a subsequence must be extracted at each occurrence.  However, to maintain clarity in the presentation, this aspect will not be mentioned explicitly. 

\subsubsection{Weak precompactness of $\{v_h\}_{h>0}$, $\{w_h\}_{h>0}$, and $\{\log(1+ u_h)\}_{h>0}$.}
From the bounds  \eqref{bound:wh_II} and \eqref{bound:log(1+uh)_II},  Kakutani's theorem \cite[Th. 3.17]{Brezis} and Banach--Alaoglu--Bourbaki \cite[Th. 3.16]{Brezis}  enable us to establish weak compactcness. Thus there exist functions $v\in L^\infty(0,\infty; L^\infty(\Omega))\cap L^2(0,\infty; L^2(\Omega))$, $w, \Xi_{\log(u+1)} \in  L^2_{\rm loc}(0,\infty; H^1(\Omega)$ such that, as $h\to 0$,
\begin{equation}\label{w*_conv_LinfLinf:vh->v}
v_h \overset{*}{\rightharpoonup} v\quad\mbox{ in }\quad L^\infty(0,\infty; L^\infty(\Omega)),
\end{equation}
\begin{equation}\label{w_conv_L2H1:vh->v}
v_h \rightharpoonup  v\quad\mbox{ in }\quad L^2_{\rm loc}(0,\infty; H^1(\Omega)),
\end{equation}
\begin{equation}\label{w_conv_L2H1:wh->w}
w_h \rightharpoonup w  \quad\mbox{ in }\quad L^2_{\rm loc}(0,\infty; H^1(\Omega)),
\end{equation}
and
\begin{equation}\label{w_conv_L2H1:log(1+uh)->log(1+u)}
 \log(1+u_h) \rightharpoonup  \Xi_{\log(u+1)}\mbox{ in } L^2_{\rm loc}(0,\infty; H^1(\Omega)).
\end{equation}

\subsubsection{Strong precompactness for $\{w_h\}_{h>0}$, $\{\log (1+u_h)\}_{h>0}$, and $\{ u_h \}_{h>0}$.}
The strong convergence of $\{w_h\}_{h>0}$ and $\{\log (1+u_h)\}_{h>0}$ is obtained through Aubin-Lions' theorem. To carry this out, a control of $\{\partial_t\log(1+u_h)\}_{h>0}$ and $\{\partial_t w_h\}_{h>0}$, respectively, in some appropriate spaces are needed. However, the strong convergence of $\{ u_h \}_{h>0}$ in $L^1_{\rm loc}(Q)$ requires using Vitali's theorem. As a result, one can enhance this strong convergence for $\{ w_h \}_{h>0}$ in $L^2_{\rm loc}(0,\infty; H^1(\Omega))$.

$\bullet$ \textbf{Precompactness of $\{v_h\}_{h>0}$.} From letting $\bar v_h=q_h \bar v$ in \eqref{eq:v_h} with $\bar v\in H^1(\Omega)\cap L^\infty(\Omega)$, where $q_h$ was defined in \eqref{def:qh}, there follows 
$$
(\partial_t v_h, q_h \bar v)_h +(\nabla v_h,\nabla q_h \bar v)+(u_h v_h, q_h \bar v)_h=0.
$$
Observe that 
$$
(\partial_t v_h, q_h \bar v)_h=(\partial_t v_h, \bar v).
$$
Moreover, on account of \eqref{Stab_Linf:qh} and \eqref{Stab_H1:qh}, one has   
$$
(\nabla v_h,\nabla q_h \bar v)\le \|\nabla v_h\|_{L^2(\Omega)} \|\nabla q_h \bar v\|_{L^2(\Omega)},
$$
and
$$
(u_h v_h, q_h \bar v)_h\le \|u_h\|_{L^1(\Omega)} \|v_h\|_{L^\infty(\Omega)} \|q_h \bar u\|_{L^\infty(\Omega)}.
$$
Thus one readily sees on noting \eqref{bound:vh_I} that 
$$
\{\partial_t v_h\}_{h>0} \mbox{ is bounded in } L^2_{\rm loc}(0,\infty; (H^1(\Omega)\cap L^\infty(\Omega))')
$$
holds as $h\to 0$ and consequently 
\begin{equation}\label{w_conv_L2(H1Linf)':v_ht->vt}
\partial_t v_h \rightharpoonup  \partial_t v\quad\mbox{ in }\quad L^2_{\rm loc}(0,\infty; (H^1(\Omega)\cap L^\infty(\Omega))'). 
\end{equation}

A straightforward application of Aubin-Lions' lemma leads to 
\begin{equation}\label{s_conv_L2L2:vh->v}
v_h \to v \quad\mbox{ in }\quad L^2_{\rm loc}(0,\infty; L^2(\Omega))\quad\mbox{ as } h\to0. 
\end{equation}

$\bullet$ \textbf{Precompactness of $\{w_h\}_{h>0}$.} Let $\bar v\in H^2(\Omega)$ and  take $\bar u_h = i_h(\frac{q_h \bar v}{v_h} )$ to obtain
$$
-(\partial_t v_h, i_h\frac{q_h \bar v}{v_h})_h -(\nabla v_h,\nabla i_h\frac{q_h \bar v}{v_h})-(u_h v_h, i_h\frac{q_h \bar v}{v_h})_h
=0.
$$
Recalling that $ w_h=- i_h \log\frac{v_h}{\|v_{0h}\|_{L^\infty(\Omega)}}$, it is shown that  
$$
-(\partial_t v_h, i_h\frac{q_h \bar v}{v_h})_h=(\partial_t w_h, \bar v).
$$
For the second term, without loss of generality, one may assume  $v_i\le v_j$, respectively. Thus we have, by \eqref{inv_local:WlpToWmq}, that 
$$
\begin{array}{rcl}
\displaystyle
(\nabla v_h,\nabla i_h\frac{q_h \bar v}{v_h})&=&\displaystyle\sum_{i<j\in I} \delta_{ji} v_h \delta_{ij}\frac{q_h\bar v}{v_h} (\nabla\varphi_{\a_i}, \nabla\varphi_{\a_j})
\\
&=&\displaystyle\sum_{i<j\in I} \delta_{ji}v_h \Big(q_h\bar v_i \delta_{ij}\frac{1}{v_h}+\frac{1}{v_j}\delta_{ji}q_h\bar v\Big) (\nabla\varphi_{\a_i}, \nabla\varphi_{\a_j}) 
\\
&\le&\displaystyle 
- \Big(\| q_h \bar v\|_{L^\infty(\Omega)} + \|\nabla  q_h \bar v\|_{L^2(\Omega)}\Big) \sum_{i<j\in I}  \frac{\delta_{ji}^2 v_h}{v_j v_i}(\nabla\varphi_{\a_i}, \nabla\varphi_{\a_j}).
\end{array}
$$
The third term is estimated  as 
$$
-(u_h v_h, i_h\frac{q_h \bar v}{v_h})_h= (u_h, q_h\bar v_h)_h\le \|u_h\|_{L^1(\Omega)} \|q_h \bar v\|_{L^\infty(\Omega)}.
$$
It is now a simple matter on invoking \eqref{Stab_Linf:qh}-\eqref{Stab_H1:qh} and \eqref{bound:wh_III}  to get 
$$
\{\partial_t w_h\}_{h>0}\mbox{ is bounded in } L^1_{\rm loc}(0,\infty; H^2(\Omega)').
$$
This estimate combined with the bound \eqref{bound:wh_II} gives that $\{w_h\}_{h>0}$ is precompact in \break$L^2_{\rm loc}(0,\infty; L^p(\Omega))$ for $1\le p <\infty$, i.e.,    
\begin{equation}\label{conv:wh->w}
w_h \to w  \quad\mbox{ in } L^2_{\rm loc}(0,\infty; L^p(\Omega))\quad\mbox { as } \quad h\to 0.
\end{equation}
$\bullet$ \textbf{Precompactness of $\{i_h\log(1+u_h)\}_{h>0}$}.
Consider $\bar u\in H^2(\Omega)$ and let $\bar u_h = i_h(\frac{q_h \bar u}{1+u_h} )$ in \eqref{eq:u_h} to obtain 
\begin{equation}\label{bound_d/dt log(1+u):eq}
\begin{array}{rcl}
\displaystyle
(\partial_t u_h, \frac{q_h\bar u}{1+u_h})_h+(\nabla u_h, \nabla  i_h\frac{q_h \bar u}{1+u_h})
+(u_h\nabla w_h, \nabla i_h\frac{ q_h \bar u}{1+u_h} )_*&&
\\
\displaystyle
+(B_u(u_h, v_h) u_h, i_h \frac{q_h \bar u}{1+u_h})&=&0.
\end{array}
\end{equation}
It is plain from \eqref{def:qh}  that 
\begin{equation}\label{bound_d/dt log(1+u):time}
(\partial_t u_h, \frac{q_h\bar u}{1+u_h} )_h=(\partial_t i_h\log(1+u_h), \bar u).
\end{equation}
In view of 
$$
\delta_{ij}\frac{q_h \bar u}{1+u_h}=q_h\bar u_i\delta_{ij}\frac{1}{1+u_h}+\frac{1}{1+u_j}\delta_{ij}q_h\bar u,
$$
there results
$$
\begin{array}{rcl}
\displaystyle
(\nabla u_h, \nabla  i_h \frac{q_h \bar u}{1+u_h} )&=&\displaystyle \sum_{i<j\in  I} \delta_{ji} u_h \delta_{ij}\frac{q_h \bar u}{1+u_h} (\nabla\varphi_{\a_j}, \nabla\varphi_{\a_i})
\\
&=&\displaystyle \sum_{i<j\in  I} \delta_{ji} u_h q_h\bar u_i\delta_{ij}\frac{1}{1+u_h} (\nabla\varphi_{\a_j}, \nabla\varphi_{\a_i})
\\
&&\displaystyle+\sum_{i<j\in  I} \delta_{ji} u_h \frac{1}{1+u_j}\delta_{ij}q_h\bar u (\nabla\varphi_{\a_j}, \nabla\varphi_{\a_i})
\\
&=&\Sigma_{11}+\Sigma_{12}.
\end{array}
$$
It follows that
$$
\Sigma_{11}\le - \|q_h \bar u_h\|_{L^\infty(\Omega)}\sum_{i<j\in  I} \frac{\delta^2_{ji}u_h}{(1+u_i)(1+u_j)} (\nabla\varphi_{\a_j}, \nabla\varphi_{\a_i}).
$$
For $\Sigma_{12}$, it is firstly observed on invoking the mean-value theorem that 
\begin{equation}\label{sec4.5.2:lab1}
\begin{array}{rcl}
\displaystyle
\frac{\delta_{ji} u_h}{1+u_j}-\delta_{ji}\log (1+u_h)&=&\displaystyle\frac{\delta_{ji} u_h}{1+u_j}-\frac{\delta_{ji} u_h}{\theta(1+u_j)+(1-\theta) (1+u_i)}
\\
&=&\displaystyle
\delta_{ji} u_h \left(\frac{1}{1+u_j}- \frac{1}{\theta(1+u_j)+(1-\theta) (1+u_i)}\right)
\\
&=&\displaystyle
(1-\theta)\left(\frac{\delta^2_{ji} u_h}{(1+u_j)(\theta(1+u_j)+(1-\theta) (1+u_i))}\right)
\\
&\le&\displaystyle
\frac{\delta^2_{ji}u_h}{(1+u_i)(1+u_j)}
\end{array}
\end{equation}
and 
\begin{equation}\label{sec4.5.2:lab2}
\delta_{ji}q_h\bar u= h_{ji}\nabla q_h\bar u|_{T_{ji}}\cdot \boldsymbol{r}_{ji},
\end{equation}
where $T_{ji}\in\mathcal{T}_h$ is such that $\a_i,\a_j\in T_{ij}$ and $h_{ji}=|\a_j-\a_i|$. From this, on noting \eqref{inv_global: WlpToWmq}, one obtains
$$
\begin{array}{rcl}
\Sigma_{12}&=&\displaystyle \sum_{i<j\in  I} \left(\frac{\delta_{ji} u_h}{1+u_j}-\delta_{ji}\log (1+u_h)\right)\delta_{ij}q_h\bar u (\nabla\varphi_{\a_j}, \nabla\varphi_{\a_i})
\\
&&\displaystyle
+\sum_{i<j\in  I} \delta_{ji}\log (1+u_h)\delta_{ij}q_h\bar u (\nabla\varphi_{\a_j}, \nabla\varphi_{\a_i})
\\
&\le&
\displaystyle
-\|q_h \bar u_h\|_{L^\infty(\Omega)}\sum_{i<j\in  I} \frac{\delta^2_{ji}u_h}{(1+u_i)(1+u_j)} (\nabla\varphi_{\a_j}, \nabla\varphi_{\a_i})
\\
&&+\|\nabla i_h\log (1+u_h)\|_{L^2(\Omega)} \| \nabla q_h\bar u\|_{L^2(\Omega)}.
\end{array}
$$
Thus
\begin{equation}\label{bound_d/dt log(1+u):diff}
\begin{split}
(\nabla u_h, &\nabla  i_h\frac{q_h \bar u}{1+u_h}) \le \Big(\|q_h\bar u\|_{L^\infty(\Omega)}+\|\nabla q_h \bar u\|_{L^2(\Omega)}\Big) 
\\
&\times\Bigg(- 2\sum_{i<j\in  I} \frac{\delta^2_{ji}u_h}{(1+u_i)(1+u_j)} (\nabla\varphi_{\a_j}, \nabla\varphi_{\a_i})
 +\|\nabla i_h\log(1+u_h)\|_{L^2(\Omega)}\Bigg). 
\end{split}
\end{equation}

Let $u_\#=\min\{u_i, u_j\}$ and $u_{\bar\#}=\max\{u_i, u_j\}$ and recall $ \mathds{I}^*=\{(i,j)\in I : u_j=u_i\}$ with $ \mathds{I}^*_c$ being its complementary. It follows on noting \eqref{New_KS_discretization} that
$$
\begin{array}{rcl}
\displaystyle
(u_h\nabla w_h, \nabla i_h\frac{ q_h \bar u}{1+u_h} )_*&=&\displaystyle\sum_{i<j\in  I}\tau_{ji}(u_h) \delta_{ji} w_h \delta_{ij}\frac{ q_h \bar u}{1+u_h} (\nabla \varphi_{\boldsymbol{a}_j},\nabla\varphi_{\boldsymbol{a}_i})
\\
&=&\displaystyle
\sigma\sum_{i<j\in  I}\tau_{ji}(u_h) \delta_{ji}w_h q_h\bar u_\# \delta_{\#\bar\#}\frac{1}{1+u_h} (\nabla  \varphi_{\boldsymbol{a}_j},\nabla\varphi_{\boldsymbol{a}_i})
\\
&&\displaystyle
+\sigma \sum_{i<j\in I}\tau_{ji}(u_h) \delta_{ji} w_h \frac{1}{1+u_{\bar \#}}\delta_{\#\bar\#}q_h\bar u (\nabla \varphi_{\boldsymbol{a}_j},\nabla\varphi_{\boldsymbol{a}_i})
\\
&:=&\Sigma_{21}+\Sigma_{22},
\end{array}
$$
where $\sigma=1$ if $\#=i$ or $-1$ if $\#=j$. On account of \eqref{def:tau_ij}, a simple calculation shows that 
$$
\begin{array}{rcl}
\Sigma_{21}&=&\displaystyle \sigma \sum_{i<j\in  \mathds{I}^*_c}\Lambda_{ji}(u_h) \delta_{\bar\# \#}\log(1+u_h) \delta_{ji}w_h q_h\bar u_i  (\nabla \varphi_{\boldsymbol{a}_j},\nabla\varphi_{\boldsymbol{a}_i})
\\
&\le&\displaystyle
\|q_h\bar u\|_{L^\infty(\Omega)}\|\nabla i_h\log(1+u_h)\|_{L^2(\Omega)} \|\nabla w_h\|_{L^2(\Omega)}.
\end{array}
$$
Further write
$$
\begin{array}{rcl}
\Sigma_{22}&=&\displaystyle \sigma \sum_{i<j\in  \mathds{I}^*_c} \Lambda_{ij}(u_h)\frac{\delta_{ji}\log(1+u_h)}{\delta_{ij}\frac{1}{1+u_h}} \delta_{ji}w_h\frac{1}{1+u_{\bar \#}}\delta_{\#\bar\#}q_h\bar u  (\nabla \varphi_{\boldsymbol{a}_j},\nabla\varphi_{\boldsymbol{a}_i})
\\
&&\displaystyle
+\sigma\sum_{i<j\in  \mathds{I}^*} u_i \delta_{ji}w_h \frac{1}{1+u_{\bar \#}}\delta_{\#\bar\#}q_h\bar u  (\nabla \varphi_{\boldsymbol{a}_j},\nabla\varphi_{\boldsymbol{a}_i}).
\end{array}
$$
As 
$$
\begin{array}{rcl}
\displaystyle
\frac{\delta_{ji}\log(1+u_h)}{\delta_{ij}\frac{1}{1+u_h}}\frac{1}{1+u_{\bar \#}}&=&\displaystyle(1+u_{\#})\frac{\delta_{ji}\log(1+u_h)}{\delta_{ji}u_h}
\\
&=&\displaystyle
\frac{1+u_{\#}}{\theta(1+u_i)+(1-\theta)(1+u_j)} \le 1,
\end{array}
$$
and
$$
\frac{u_i}{1+u_{\bar \#}}\le \frac{u_i}{1+u_i}\le1,
$$
one has
$$
\Sigma_{22}\le \|\nabla w_h\|_{L^2(\Omega)} \|\nabla q_h \bar u\|_{L^2(\Omega)}.
$$
Therefore,
\begin{equation}\label{bound_d/dt log(1+u): chemo}
\begin{split}
(u_h\nabla  w_h, \nabla i_h\frac{ q_h \bar u}{1+u_h} )_* \le & \Big(\|q_h\bar u\|_{L^\infty(\Omega)} +\|\nabla q_h \bar u\|_{L^2(\Omega)}\Big)
\\
&\times\Big(\|\nabla w_h\|_{L^2(\Omega)}  (1+ \|\nabla i_h\log(1+u_h)\|_{L^2(\Omega)} )\Big)  .
\end{split}
\end{equation}

Finally, we proceed as in bounding \eqref{bound_d/dt log(1+u): chemo}:
$$
\begin{array}{rcl}
\displaystyle
(B(u_h, v_h) u_h, i_h \frac{q_h \bar u}{1+u_h})&=&\displaystyle\sum_{i<j\in  I}\beta_{ji}(u_h, v_h) \delta_{ji} u_h \delta_{ji}\frac{q_h\bar u}{1+u_h}
\\
&=&\displaystyle
\sigma\sum_{i<j\in  I}\beta_{ji}(u_h,v_h) \delta_{ji} u_h q_h\bar u_i \delta_{ji}\frac{1}{1+u_h}
\\
&&\displaystyle
+ \sum_{i<j\in  I}\beta_{ji} (u_h, v_h) \delta_{ji} u_h  \frac{1}{1+u_{i}}\delta_{ji}q_h\bar u
\\
&=&\Sigma_{31}+\Sigma_{32}.
\end{array}
$$
Consequently
$$
\Sigma_{31}\le\|q_h\bar u\|_{L^\infty(\Omega)} \sum_{i<j\in I}\beta_{ji} (u_h, v_h) \frac{\delta^2_{ji} u_h}{(1+u_i)(1+u_j)}
$$
and
$$
\begin{array}{rcl}
\Sigma_{32}&=&\displaystyle\sum_{i<j\in  I}  \beta_{ji}(u_h, v_h)  \left(\frac{\delta_{ji} u_h}{1+u_j}-\delta_{ji}\log (1+u_h)\right)\delta_{ij}q_h\bar u
\\
&&\displaystyle
+ \sum_{i<j\in  I} \beta_{ji}(u_h, v_h) \delta_{ji}\log (1+u_h)\delta_{ij}q_h\bar u.
\end{array}
$$
Assume that 
$$
\beta_{ji}(u_h, v_h)=\alpha_{\a_i}(u_h)(1+\delta_{ji}w_h \Big[\frac{\bar \tau_{ji}(u_h)}{\delta_{ji}u_h}-\frac{u_i}{\delta_{ji} u_h}\Big]\Big)(\nabla \varphi_{\a_j}, \nabla \varphi_{\a_i}),
$$
where
$$
\bar\tau_{ji}(u_h)=\Lambda_{ij}(u_h)\frac{\delta_{ji}\log(1+u_h)}{\delta_{ij}\frac{1}{1+u_h}}.
$$
Thus, we have, by \eqref{sec4.1:B1} and \eqref{sec4.1:B2} together with \eqref{Positivity_uh}, that 
$$
\Big|\frac{\bar \tau_{ji}(u_h)}{\delta_{ji}u_h}-\frac{u_i}{\delta_{ji} u_h}\Big|\le 1,
$$
which implies by Lemma \ref{lm:alpha_i}  that 
$$
|\beta_{ji}(u_h, v_h)|\le - |\delta_{ji} w_h| (\nabla \varphi_{\a_j}, \nabla \varphi_{\a_i}).
$$
Then, from \eqref{sec4.5.2:lab1}, one is led to bounding 
$$
\begin{array}{rcl}
\Sigma_{32}&\le&\displaystyle
 C \|q_h\bar u\|_{L^\infty(\Omega)} \sum_{i<j\in  I}  \beta_{ji}(u_h, v_h)\frac{\delta_{ji}^2 u_h}{(1+u_j)(1+u_i)}
\\
& &\displaystyle
 - \|q_h\bar u\|_{L^\infty(\Omega)} \sum_{i<j\in  I}  |\delta_{ji} w_h|  |\delta_{ji}\log(1+u_h)| (\nabla \varphi_{\a_j}, \nabla \varphi_{\a_i})
\\
&\le&\displaystyle
 C \|q_h\bar u\|_{L^\infty(\Omega)} \sum_{i<j\in  I}  \beta_{ji}(u_h, v_h)\frac{\delta_{ji}^2 u_h}{(1+u_j)(1+u_i)}
\\
&&+ \|q_h\bar u\|_{L^\infty(\Omega)} \|\nabla w_h\|_{L^2(\Omega)} \|\nabla i_h\log(1+u_h)\|_{L^2(\Omega)}.
\end{array}
$$
 Accordingly
\begin{equation}\label{bound_d/dt log(1+u):stab}
\begin{array}{rcl}
\displaystyle
(B(u_h, v_h) u_h, i_h \frac{q_h \bar u}{1+u_h}) &\le&\displaystyle
C \|q_h\bar u\|_{L^\infty(\Omega)} \sum_{i<j\in  I}\beta_{ji}(u_h, v_h) \frac{\delta^2_{ji} u_h}{(1+u_i)(1+u_j)}
\\
&&+ \|q_h\bar u\|_{L^\infty(\Omega)} \|\nabla w_h\|_{L^2(\Omega)} \|\nabla i_h\log(1+u_h)\|_{L^2(\Omega)}.
\end{array}
\end{equation}

Combing  \eqref{bound_d/dt log(1+u):time}, \eqref{bound_d/dt log(1+u):diff}-\eqref{bound_d/dt log(1+u):stab} with \eqref{bound_d/dt log(1+u):eq} yields
$$
\begin{array}{rcl}
(\partial_t i_h\log (1+ u_h), \bar u)&\le& C \Bigg(1+
 \|\nabla i_h\log(1+u_h)\|^2_{L^2(\Omega)}+\|\nabla w_h\|^2_{L^2(\Omega)}
\\
&&\displaystyle
+\sum_{i<j\in I}\Big(-(\nabla\varphi_{\a_j}, \nabla\varphi_{\a_i})+\beta_{ji}(u_h,v_h) \Big)\frac{\delta^2_{ji} u_h}{(1+u_i)(1+u_j)} \Bigg)
\\
&&\qquad\times\Big(\|q_h\bar u\|_{L^\infty(\Omega)}+\|\nabla q_h \bar u\|_{L^2(\Omega)}\Big).
\end{array}
$$   
Integrating over $(0,t)$ and recalling the embedding $H^2(\Omega)$ into  $L^\infty(\Omega)$, we find after using \eqref{bound:wh_I} and \eqref{bound:log(1+uh)_I} that    
$$
\int_0^t \|\partial_t i_h\log(1+u_h)\|_{H^2(\Omega)'}\, \dt\le C [(1+t)\|u_{0h}\|_{L^1(\Omega)}+\|w_{0h}\|_{L^1(\Omega)} + t],
$$   
and therefore 
\begin{equation}\label{bound_L1H2':d/dt_log(1+u_h)}
\{\partial_t i_h\log(1+u_h)\}_{h>0}\mbox{ is bounded in } L^1_{\rm loc}(0,\infty; H^2(\Omega)');
\end{equation}
implying 
\begin{equation}\label{w_conv_L2H2':d/dt_log(1+uh)->d/dt_log(1+u)}
\partial_t\log(1+u_h) \rightharpoonup \partial_t\log(1+u)  \quad\mbox{ in }\quad L^1_{\rm loc}(0,\infty; H^2(\Omega)').
\end{equation}
This latter bound combined with \eqref{bound:log(1+uh)_II} in application of the Aubin-Lions' lemma leads to
\begin{equation}\label{s_conv_L2Lp:log(1+uh)->log(1+u)}
\log(1+u_h) \to \Xi_{\log(u+1)}  \mbox{ in } L^2_{\rm loc}(0,\infty; L^p(\Omega)) \quad\mbox { as } \quad h\to 0, 
\end{equation}
since $H^1(\Omega)$ is compactly embedded into $L^p(\Omega)$ for $1\le p<\infty$.

\subsubsection{Strong precompactness of $\{ u_h\}_{h>0}$.}
To prove that $\{ u_h\}_{h>0}$ is precompact, Vitali's convergence theorem will be used.

$\bullet$ First of all, we want to prove that there exists $u\in L^1(Q)$ such that  
$$
u_h\to u:=e^{ \Xi_{\log(1+u)}} -1\quad \mbox { a.e. in }\quad (0,\infty)\times\Omega \quad\mbox{ as }\quad h\to 0.
$$
In doing this, we know from \eqref{s_conv_L2Lp:log(1+uh)->log(1+u)} that    
$$
i_h\log(1+u_h) \to \Xi_{\log(1+u)}\quad \mbox {a.e. in }\quad (0,\infty)\times\Omega
$$
as $h\to0$. Further, from the fact that $\log (1+x)$  is concave, one finds, on noting $\sum_{i\in I }\varphi_{\a_i}=1$, that  
$$
e^{i_h\log(1+u_h)}-1\le e^{\log(1+u_h)}-1=u_h \mbox{ a. e. in } [0,\infty)\times\Omega.
$$ 
Then, by Lebesgue's dominated convergence theorem, it is deduced, for all $n\in\N$, that    
\begin{equation}\label{conv:e^log(1+uh)->u}
e^{i_h\log(1+u_h)} -1\to u \quad \mbox{ in }\quad L^1((n, n+1)\times\Omega)\quad\mbox{ as }\quad h\to 0.
\end{equation}
Now, for $0<\varepsilon<1$, let $u_\varepsilon=\rho_\varepsilon*u \in L^1(n,n+1; C^0(\bar\Omega)\cap W^{1,\infty}(\Omega)) $ be a spacial regularization of $u$ by convolution, with $\rho_\varepsilon$ being a standard mollifier, such that it satisfies 
\begin{equation}\label{conv:u_eps->u}
u_\varepsilon \to u\quad \mbox{ in } \quad  L^1(Q_n)\quad\mbox{ as }\quad \varepsilon \to 0.
\end{equation}
where $Q_n:=(n,n+1)\times\Omega$.

By Egorov's theorem, it follows that, for each $\eta > 0$, there exists a measurable subset $\mathcal{A}\subset (n,n+1)\times\Omega$ such that $|\mathcal{A}|<\frac{\eta}{2}$, and $\{e^{i_h\log(1+u_h)} -1\}_{h>0}$ converges  uniformly to $u$ on $[(n,n+1)\times\Omega]\backslash \mathcal{A}$. Analogously, there exists $\mathcal{B}\subset(n,n+1)\times\Omega$ such that $|\mathcal{B}|<\frac{\eta}{2}$, and $\{u_\varepsilon\}_{\varepsilon>0}$ such that $u_\varepsilon$ converges to $u$ uniformly on $[(n,n+1)\times\Omega]\backslash \mathcal{B}$. Next write 
$$
u_h- u=i_h e^{i_h\log(1+u_h)} -1 - u= i_h e^{i_h\log(1+u_h)} -1 \pm i_h u_\varepsilon \pm u_\varepsilon- u.
$$
Let $\mathcal{C}=\mathcal{A}\cup \mathcal{B}$. Then, by \eqref{Stab_Linf:ih}, we obtain, on noting \eqref{conv:u_eps->u} and \eqref{conv:e^log(1+uh)->u}, that
$$
\begin{array}{rcl}
\|i_h e^{i_h\log(1+u_h)} -1- i_h u_\varepsilon\|_{L^\infty(Q_n\backslash \mathcal{C})}&\le& \| e^{i_h\log(1+u_h)} -1- u_\varepsilon\|_{L^\infty(Q_n\backslash \mathcal{C})}
\\
&\le&\| e^{i_h\log(1+u_h)} -1- u\|_{L^\infty(Q_n\backslash \mathcal{A})}
\\
&&+\| u- u_\varepsilon\|_{L^\infty(Q_n\backslash \mathcal{B})} \to 0
\end{array}
$$ 
as $(h, \varepsilon)\to (0,0)$, and, by \eqref{err_LinfW1inf:i_h},  
$$
\begin{array}{rcl}
\|i_hu_\varepsilon - u_\varepsilon\|_{L^\infty(Q_n\backslash \mathcal{C})}&\le& C h \|u_\varepsilon\|_{L^\infty(n,n+1; W^{1,\infty}(\Omega))}
\\
&\le& C h \|\rho_\varepsilon\|_{W^{1,\infty}(\R^2)} \|u\|_{L^\infty(n,n+1; L^1(\Omega))} 
\\
&\le& C \dfrac{h}{\varepsilon^3} \|u\|_{L^\infty(n,n+1; L^1(\Omega))} \to 0
\end{array}
$$
as $(h, \varepsilon) \to (0,0)$ if, for instance, $\varepsilon=h^\frac{1}{6}$. Therefore we have proved that, for each $\eta>0$, there exists $\mathcal{C}\subset(0,T)\times\Omega$ such that $|\mathcal{C}|<\eta$, and $\{u_h\}_{h>0}$ converges to $u$ uniformly on $[(n,n+1)\times\Omega]\backslash \mathcal{C}$. As a consequence of this, we have that  
$$
u_h\to u\quad\mbox{ a. e. in }\quad Q_n,
$$
and hence, by Cantor's diagonal argument, that, as $h\to0$,  
\begin{equation}\label{conv_a.e.:uh->u}
u_h\to u\quad\mbox{ a. e. in }\quad (0,\infty)\times\Omega.
\end{equation}

$\bullet$ Next we wish to state that $\{u_h\}_{h>0}$ is equi-integrable, i.e.,
for each $\varepsilon >0$, there exists $\delta_\varepsilon >0$ so that for all $\mathcal{\mathcal{D}}\subset Q_n$ satisfying  $\mu(\mathcal{D})<\delta_\varepsilon$, it implies that  
$$
\sup_{h>0} \int_{\mathcal{D}} |u_h|\, \dx\,\dt < \varepsilon. 
$$

From \eqref{Moser-Trudinger}, we have, on choosing $\bar u_h=2 i_h\log(1+u_h)$, that 
\begin{align*}
\int_\Omega i_h (1+u_h)^2 \,\dx\le&C_\Omega \Big(1+ 2\|\nabla i_h\log(1+u_h)\|^2_{L^2(\Omega)}\Big) 
\\
&\times e^{\displaystyle 2 C_\Omega \Big(\|\nabla i_h\log(1+u_h)\|^2_{L^2(\Omega)}+\|i_h\log(1+u_h)\|_{L^1(\Omega)}\Big)}.
\end{align*}
In view of \eqref{Stab:i_h} and $\log(1+x)\le x$ for $x>0$, one is allowed to bound
$$
\log \frac{1}{|\Omega|}\int_\Omega (1+u_h)^2 \,\dx\le \log \frac{C_\Omega}{|\Omega|}+ 4 C_\Omega\|\nabla i_h\log (1+u_h)\|^2+2C_\Omega \|u_h\|_{L^1(\Omega)}
$$
and hence, after time integration for $t>0$ and use of \eqref{bound:log(1+uh)_I},  
\begin{equation}\label{sec4.5.3:lab1}
\begin{array}{rcl}
\displaystyle
\int_0^t\left(\log \frac{1}{|\Omega|}\int_\Omega (1+u_h)^2 \,\dx \right)\, \dt&\le&\displaystyle \Big( \log\frac{C_\Omega}{|\Omega|}+ 2 C_\Omega \|u_{0h}\|_{L^1(\Omega)}\Big)t  
\\
&&\displaystyle
+4 C_\Omega \int_0^t \|\nabla i_h\log (1+ u_h)\|^2_{L^2(\Omega)}
\\
&\le&\displaystyle \Big( \log\frac{C_\Omega}{|\Omega|}+ 2 C_\Omega \|u_{0h}\|_{L^1(\Omega)}\Big)t  
\\
&&\displaystyle
+16C_\Omega\Big[(1+t)\|u_{0h}\|_{L^1(\Omega)}+\|w_{0h} \|_{L^1(\Omega)} \Big]
\\
&\le& K \Big[1+(1+t)\|u_{0h}\|_{L^1(\Omega)}+\|w_{0h} \|_{L^1(\Omega)} \Big]:=L.
\end{array}
\end{equation}

Fix $n\in\mathds{N}$. Choose $\xi>|\Omega|$ and define 
$$
\mathcal{D}_1(h)=\{t\in(n,n+1): \int_\Omega u^2(t)\, \dx<\xi\}
$$
and
$$
\mathcal{D}_2(h)=\{t\in(n,n+1): \int_\Omega u^2(t)\, \dx\ge\xi\}.
$$
Using \eqref{sec4.5.3:lab1} yields
$$
\begin{array}{rcl}
L&\ge&\displaystyle \int_{\mathcal{D}_2(h)}\left(\log \frac{1}{|\Omega|}\int_\Omega (1+u_h)^2 \,\dx \right)\, \dt 
\\
&\ge&\displaystyle \int_{\mathcal{D}_2(h)}\left(\log \frac{1}{|\Omega|}\int_\Omega u^2_h \,\dx \right)\, \dt
\\
&\ge&\displaystyle |\mathcal{D}_2(h)| \log\frac{\xi}{|\Omega|},  
\end{array}
$$ 
which implies 
$$
|\mathcal{D}_2(h)|\le \frac{L}{ \log\frac{\xi}{|\Omega|}}. 
$$

Let $\mathcal{E} \subset Q_n$ be an arbitrary measurable such that $ |\mathcal{E}| < \delta $ and $ \mathcal{E}(t) = \{ \x \in \Omega \mid (\x,t) \in \mathcal{E}\} $.  Then 
$$
\begin{array}{rcl}
\displaystyle
\int\!\!\!\!\int_\mathcal{E} u_{h}\, \dx\, \dt &=&\displaystyle \int_{\mathcal{D}_1(h)} \int_{\mathcal{E}(t)} u_{h}\, \dx\, \dt+\int_{\mathcal{D}_2(h)} \int_{\mathcal{E}(t)} u_{h}\, \dx\, \dt
\\
&\le&\displaystyle
\int_{\mathcal{D}_1(h)} |\mathcal{E}(t)|^\frac{1}{2} \left(\int_{\Omega}u^2_h \, \dx\right)^{\frac{1}{2}} \, dt + |\mathcal{D}_2(h)| \|u_{0h}\|_{L^1(\Omega)} 
\\
&\le&\displaystyle
\xi^{\frac{1}{2}}  \int_{\mathcal{D}_1(h)} |\mathcal{E}(t)|^\frac{1}{2}\, dt + |\mathcal{D}_2(h)| \|u_{0h}\|_{L^1(\Omega)} 
\\
&\le&\displaystyle
\xi^{\frac{1}{2}} |\mathcal{D}(h)|^{\frac{1}{2}}  \left(\int_{\mathcal{D}_1(h)} |\mathcal{E}(t)|\, \dt \right)^{\frac{1}{2}}  + |\mathcal{D}_2(h)| \|u_{0h}\|_{L^1(\Omega)} 
\\
&\le&\displaystyle
\xi^{\frac{1}{2}} (n+1)^{\frac{1}{2}}  |\mathcal{E}|^\frac{1}{2}  + |\mathcal{D}_2(h)| \|u_{0h}\|_{L^1(\Omega)} 
\\
&\le&\displaystyle 
\xi^{\frac{1}{2}} (n+1)^{\frac{1}{2}} \delta^\frac{1}{2} +   \frac{L}{ \log\frac{\xi}{|\Omega|}}  \|u_{0h}\|_{L^1(\Omega)}.
\end{array}
$$
Next if one chooses  $\xi$ to be sufficiently large such that  
$$
\frac{L}{ \log\frac{\xi}{|\Omega|}}  \|u_{0h}\|_{L^1(\Omega)}\le\frac{\varepsilon}{2} 
$$
and hence $\delta$ to be adequately small so that
$$
\xi^{\frac{1}{2}} (n+1)^{\frac{1}{2}} \delta^\frac{1}{2}\le\frac{\varepsilon}{2},
$$ 
then it follows that
$$
\int\!\!\!\!\int_\mathcal{E} u_{h}\, \dx\, \dt\le\varepsilon.
$$ 
$\bullet$ Now Vitali's theorem ensures that, as $h\to 0$, 
$$
u_h\to u\quad\mbox{ in } L^1(n, n+1; L^1(\Omega))
$$
and Cantor's diagonal argument gives
 \begin{equation}\label{s_conv_L1L1:uh->u}
u_h\to u\quad\mbox{ in } L^1_{\rm loc}(0,\infty; L^1(\Omega)).
\end{equation}
as $h\to 0$.

In order to be able to identify $\Xi_{\log(1+u)}\equiv\log(1+u)$, one needs to simply observe, by \eqref{s_conv_L1L1:uh->u},  that
$$
\log(1+u_h)\to \log(1+u)\quad\mbox{ a. e. in }\quad (0,\infty)\times\Omega,
$$
as $h\to 0$.   
\subsubsection{Strong precompactness of $\{\nabla w_h\}_{h>0}$}
The matter that $\{\nabla w_h\}_{h>0}$ is precompact in $L^2_\mathrm{loc}(0,\infty; L^2(\Omega))$  is significantly more complex. We follow the reasoning presented in \cite[Co. 6.4]{Badia_GS}. Thus our first goal is to pass to the limit as $h\to 0$ in \eqref{eq:v_h}. Let us define $\tilde W^{k,\infty}(\Omega)=H^k(\Omega)\cap W^{k-1,\infty}(\Omega)$, for $k=1,2$, endowed with $\|\cdot\|_{\tilde W^{k,\infty}(\Omega)}=\|\cdot\|_{H^k(\Omega)}+\|\cdot\|_{W^{k-1,\infty}(\Omega)}$,  and $\tilde W^{k,\infty}(\Omega)'$  its dual with $<\cdot, \cdot>_{k,\infty}$ being the dual pairing.  Take $\varphi\in C^\infty_0([0,\infty); \tilde W^{2,\infty}(\Omega))$. Choose $\bar v_h=sz_h \varphi$ in \eqref{eq:v_h} and integrate over $(0,t)$ to have
$$
\int_0^t (\partial_t v_h,  sz_h \varphi)_h\, \dt+\int_0^t (\nabla v_h, \nabla sz_h \varphi)\, \dt+\int_0^t (u_h v_h, sz_h\varphi)_h\,\dt
=0. 
$$
We have, by \eqref{Com_err_H1'W1inf:i_h}, \eqref{err_H1H2:i_h}, and \eqref{err_LinfW1inf:i_h}, that  
\begin{equation}\label{sec4.5.4:lab1}
\begin{array}{rcl}
(\partial_t v_h, sz_h\varphi)_h&\le& \|\partial_t v_h sz_h\varphi-i_h(\partial_t v_h sz_h\varphi)\|_{L^1(\Omega)}+<\partial_t v_h, sz_h\varphi>_{1,\infty} 
\\
&\le& C_{\rm com} h^{\frac{1}{2}} \|\partial_t v_h\|_{\tilde W^{1,\infty}(\Omega)'} \|\nabla sz_h\varphi\|_{L^\infty(\Omega)}
\\
&&+<\partial_t v_h, sz_h\varphi-\varphi >_{1,\infty}+ <\partial_t v_h, \varphi >_{1,\infty} 
\\
&\le& (C_{\rm com} h^{\frac{1}{2}}+ C_{\rm err} h) \|\partial_t v_h\|_{\tilde W^{1,\infty}(\Omega)'} \|\varphi\|_{\tilde W^{2,\infty}(\Omega)}+ <\partial_t v_h, \varphi >_{1,\infty} 
\end{array}
\end{equation}
and therefore, from \eqref{w_conv_L2(H1Linf)':v_ht->vt}, we infer that, as $h\to 0$, 
$$
\int_0^t (\partial_t v_h, sz_h\varphi)_h\, \dt\to\int_0^t <\partial_t v, \varphi>_{1,\infty}\, \dt.
$$ 
Similarly, by \eqref{err_H1H2:i_h},    
$$
\begin{array}{rcl}
(\nabla v_h, \nabla sz_h \varphi)&=&(\nabla v_h, \nabla (sz_h \varphi- \varphi ))+(\nabla v_h, \nabla \varphi)
\\
&\le & C_\mathrm{err} h \|\nabla v_h\|_{L^2(\Omega)} \|\varphi \|_{H^2(\Omega)}+(\nabla v_h, \nabla \varphi);
\end{array}
$$
as a result of \eqref{w_conv_L2H1:vh->v}, as $h\to 0$,
$$
\int_0^t (\nabla v_h, \nabla q_h \varphi )\, \dt\to \int_0^t (\nabla v, \nabla \varphi )\, \dt.
$$
Next, in view of \eqref{Com_err_L2H1:i_h},  \eqref{inv_global: WlpToWmq}, \eqref{Stab_Linf:ih}, and \eqref{err_LinfW1inf:i_h}, we bound as 
$$
\begin{array}{rcl}
(u_h v_h, sz_h\varphi)_h&=& (u_h v_h, sz_h\varphi)_h\pm(i_h(u_h v_h), sz_h \varphi)\pm(u_h v_h, sz_h\varphi)\pm(u_h v_h, \varphi) 
\\
&\le&
\|i_h(i_h(u_h v_h)sz_h\varphi)-i_h(u_h v_h)sz_h\varphi\|_{L^1(\Omega)}
\\
&&+ \|i_h(u_hv_h)-(u_h v_h)\|_{L^1(\Omega)} \|sz_h \varphi\|_{L^\infty(\Omega)}
\\
&&+ \|u_h\|_{L^1(\Omega)} \|v_h\|_{L^\infty(\Omega)} \|\varphi-sz_h\varphi\|_{L^\infty(\Omega)}
\\
&&+(u_h v_h, v)
\\
&\le& C_{\rm com} h^{\frac{1}{2}} \|u_h\|_{L^1(\Omega)}\|v_h\|_{L^\infty(\Omega)} \|\nabla sz_h\varphi\|_{L^2(\Omega)}
\\
&&+ C_{\rm com} h^{\frac{1}{2}} \|u_h\|_{L^1(\Omega)} \|\nabla v_h\|_{L^2(\Omega)} \|\varphi\|_{L^\infty(\Omega)}
\\
&&+ C_{\rm err} h \|u_h\|_{L^1(\Omega)} \|v_h\|_{L^\infty(\Omega)} \|\varphi\|_{W^{1,\infty}(\Omega)} 
\\
&&+(u_h v_h, \varphi)
\\
&\le &C h^{\frac{1}{2}} \|u_h\|_{L^1(\Omega)}(\|v_h\|_{L^\infty(\Omega)} + \|\nabla v_h\|_{L^2(\Omega)}) \|\varphi \|_{W^{1,\infty}(\Omega)}
\\
&&+(u_h v_h, \varphi),
\end{array}
$$
which gives, on using \eqref{Positivity_and_DMaxP_vh}, \eqref{bound:vh_I},  \eqref{Mass-convervation-uh},  \eqref{w*_conv_LinfLinf:vh->v}, and \eqref{s_conv_L1L1:uh->u}, that 
$$
\int_0^t (u_h v_h, sz_h\varphi)_h\,\dt\to \int_0^t (u v, \varphi)\,\dt\quad\mbox{ as }\quad h\to 0.
$$

Consequently, we have found $v\in L^2_\mathrm{loc}(0,\infty; \tilde W^{1,\infty}(\Omega))$ with $\partial_t v\in L^2_\mathrm{loc}(0,\infty; \tilde W^{1,\infty}(\Omega)') $, such that 
$$
\partial_t v - \Delta v + u v=0\quad\mbox{  in } \quad L^2_{\rm loc}(0,\infty; W^{1,\infty}(\Omega)' ),
$$
since $W^{1,\infty}(\Omega)\cap H^2(\Omega)$ is dense in $L^\infty(\Omega)\cap H^1(\Omega)$. 

Next, as a  test function, we take $\bar v =\frac{1}{v+\delta \|v_0\|_{L^\infty(\Omega)}}\in H^1(\Omega)\cap L^\infty(\Omega)$, for $\delta>0$, to get 
$$
<\partial_t v, \bar v>+(\nabla v, \nabla \bar v)+( u v, \bar v)=0,
$$
which can be handled to find 
$$
- \frac{\mathrm{d}}{\dt} \int_\Omega\log(\frac{v+\delta \|v_0\|_{L^\infty(\Omega)}}{(1+\delta) \|v_0\|_{L^\infty(\Omega)}})\, \dx- \int_\Omega \frac{|\nabla v|^2}{(v+\delta \|v_0\|_{L^\infty(\Omega)})^2}\, \dx- \int_\Omega u \frac{v}{v+\delta \|v_0\|_{L^\infty(\Omega)}}=0.
$$
Integrating over $(n,t)$, with $t<n+1$, and thereafter over $(n, n+1)$  for $n\in\mathds{N}$, yields on invoking Fubini's theorem that   
\begin{equation*}
\begin{split}
-\int_n^{n+1} \int_\Omega\log(\frac{v(t)+\delta \|v_0\|_{L^\infty(\Omega)}}{(1+\delta) \|v_0\|_{L^\infty(\Omega)}})\, \dx+ \int_n^{n+1} (n+1-t) \int_\Omega \frac{|\nabla v|^2}{(v+\delta \|v_0\|_{L^\infty(\Omega)})^2}\, \dx \,\dt
\\
- \int_n^{n+1} (n+1-t) \int_\Omega \frac{uv}{v+\delta \|v_0\|_{L^\infty(\Omega)}}\,\dx\,\dt=- n \int_\Omega\log(\frac{v(n)+\delta \|v_0\|_{L^\infty(\Omega)}}{(1+\delta) \|v_0\|_{L^\infty(\Omega)}})\, \dx.
\end{split}
\end{equation*}
From the fact that if $0<\delta_1<\delta_2<1$, it follows that 
$$
-\log(\frac{v(t)+\delta_2 \|v_0\|_{L^\infty(\Omega)}}{(1+\delta_2) \|v_0\|_{L^\infty(\Omega)}})\le -\log(\frac{v(t)+\delta_1 \|v_0\|_{L^\infty(\Omega)}}{(1+\delta_1) \|v_0\|_{L^\infty(\Omega)}})
$$
$$
\frac{|\nabla v|^2}{(v+\delta_2 \|v_0\|_{L^\infty(\Omega)})^2}\le \frac{|\nabla v|^2}{(v+\delta_1 \|v_0\|_{L^\infty(\Omega)})^2}
$$
and
$$
\frac{uv}{v+\delta_2 \|v_0\|_{L^\infty(\Omega)}}\le \frac{uv}{v+\delta_1 \|v_0\|_{L^\infty(\Omega)}}.
$$
Then Beppo Levi's theorem ensures that, as $\delta\to 0$, one gets             
\begin{equation}\label{sec4.5.4:lab2}
\begin{array}{l}
\displaystyle
\int_n^{n+1} \|w\|_{L^1(\Omega)} \, \dt+ \int_n^{n+1} (n+1-t) \|\nabla w\|^2_{L^2(\Omega)}\,\dt
\\
\displaystyle
- \int_n^{n+1} (n+1-t) \|u\|_{L^1(\Omega)}\,\dt= n \|w_0\|_{L^1(\Omega)}.
\end{array}
\end{equation}
Next  we start from \eqref{eq:wh} integrated over $(0, t)$ and then integrated over $(0, s)$ to find  
\begin{equation}\label{sec4.5.4:lab3}
\begin{array}{l}
\displaystyle
\int_n^{n+1} \|w_h\|_{L^1(\Omega)} \, \dt+ \int_n^{n+1} (n+1-t) \|\nabla w_h\|^2_{L^2(\Omega)}\,\dt
\\
\displaystyle
- \int_n^{n+1} (n+1-t) \|u_h\|_{L^1(\Omega)}\,\dt\le n \|w_{0h}\|_{L^1(\Omega)}.
\end{array}
\end{equation}
On comparison  \eqref{sec4.5.4:lab2} with \eqref{sec4.5.4:lab3} and passage to the  limit superior, we deduce 
$$
\limsup_{h\to0}\int_n^{n+1} (n+1-t) \|\nabla w_h\|^2_{L^2(\Omega)}\le \int_n^{n+1} (n+1-t) \|\nabla w\|^2_{L^2(\Omega)}
$$
owing to \eqref{conv:wh->w} and \eqref{s_conv_L1L1:uh->u}. Consequently, 
\begin{equation}\label{s_conv_L2H1:wh->w}
w_h \to w \quad\mbox{ in } L^2_\mathrm{loc}(0,\infty; H^1(\Omega))\quad\mbox{ as }\quad  h\to 0. 
\end{equation}
Furthermore, from \eqref{conv:v0h->v0} and a regularization argument, it leads to  
$$
v_h=\|v_{0h}\|_{L^\infty(\Omega)} i_h e^{-w_h} \to v= \|v_0\|_{L^\infty(\Omega)} e^{-w} \quad\mbox{ in } L^p_\mathrm{loc}(0,\infty; L^p(\Omega))\quad\mbox{ as }\quad  h\to 0
$$
for $p\in[1,\infty)$, which implies that 
$$
v>0 \quad\mbox{ a. e. in }\quad (0,\infty)\times\Omega
$$
and
\begin{equation}\label{w=-log v/v0}
w=-\log \frac{v}{\|v_0\|_{L^\infty(\Omega)}}.
\end{equation}

\subsection{Convergence to a generalized solution}
We are in a position to prove that the sequence of pairs $\{u_h , v_h\}_{h>0}$ converges to a generalized weak solution in the sense of Definition \eqref{def:gener_sol}.  

Let $\psi\in C^\infty_0(\bar\Omega\times[0,\infty))$ be nonnegative and substitute $\bar u_h = i_h(\frac{sz_h \varphi}{1+u_h} )$ into \eqref{eq:u_h} to get after integration over $(0,\infty)$ that   
$$
\begin{array}{rcl}
\displaystyle
\int_0^\infty (\partial_t u_h, \frac{sz_h\varphi}{1+u_h})_h\,\dt+\int_0^\infty(\nabla u_h, \nabla  i_h\frac{sz_h \varphi}{1+u_h})\,\dt &&
\\
\displaystyle
+ \int_0^\infty (u_h\nabla w_h, \nabla i_h\frac{sz_h \varphi}{1+u_h} )_*\, \dt
+\int_0^\infty(B(u_h, v_h) u_h, i_h \frac{sz_h \varphi}{1+u_h})\,\dt&=&0.
\end{array}
$$

$\bullet$ It follows on proceeding as in \eqref{sec4.5.4:lab1} and noting \eqref{bound_L1H2':d/dt_log(1+u_h)}, \eqref{w_conv_L2H2':d/dt_log(1+uh)->d/dt_log(1+u)} and \eqref{conv:u0h->u0}, that, as $h\to 0$,   
$$
\int_0^\infty (\partial_t u_h, \frac{sz_h\varphi}{1+u_h} )_h\,\dt \to -\int_0^\infty (\log(1+u), \partial_t \varphi )\,\dt- (\log(1+u_0), \varphi(0)),
$$
where we have used integration by part.  

$\bullet$ It is evident that 
$$
\begin{array}{rcl}
\displaystyle
\int_0^\infty(\nabla u_h, \nabla  i_h \frac{sz_h \varphi}{1+u_h})\,\dt&=&\displaystyle \int_0^\infty \sum_{i<j\in  I} \delta_{ji} u_h \delta_{ij}\frac{sz_h \varphi}{1+u_h} (\nabla\varphi_{\a_j}, \nabla\varphi_{\a_i})
\\
&=&\displaystyle \int_0^\infty\sum_{i<j\in  I} \delta_{ji} u_h sz_h\varphi_i\delta_{ij}\frac{1}{1+u_h} (\nabla\varphi_{\a_j}, \nabla\varphi_{\a_i})
\\
&&\displaystyle+\int_0^\infty\sum_{i<j\in  I} \delta_{ji} u_h \frac{1}{1+u_j}\delta_{ij}sz_h\varphi (\nabla\varphi_{\a_j}, \nabla\varphi_{\a_i})
\\
&=&\Sigma_{11}+\Sigma_{12}.
\end{array}
$$
For $\Sigma_{11}$, write
$$
\begin{array}{rcl}
\Sigma_{11} &=&\displaystyle
\int_0^\infty\sum_{i<j\in  I} \delta_{ji} u_h (sz_h\varphi_i-\varphi_i)\delta_{ij}\frac{1}{1+u_h} (\nabla\varphi_{\a_j}, \nabla\varphi_{\a_i})
\\
&&\displaystyle
+\int_0^\infty\sum_{i<j\in  I} \delta_{ji} u_h \varphi_i \delta_{ij}\frac{1}{1+u_h} (\nabla\varphi_{\a_j}, \nabla\varphi_{\a_i})
\\
&=&\displaystyle
\int_0^\infty\sum_{i<j\in  I} \delta_{ji} u_h (sz_h\varphi_i-\varphi_i)\delta_{ij}\frac{1}{1+u_h} (\nabla\varphi_{\a_j}, \nabla\varphi_{\a_i})
\\
&&\displaystyle
+\int_0^\infty\sum_{\mbox{\tiny$\begin{array}{c} i<j\in I   \\ k\in I\end{array}$}} \delta_{ji} u_h \varphi_k \delta_{ij}\frac{1}{1+u_h} (\varphi_{\a_k}\nabla\varphi_{\a_j}, \nabla\varphi_{\a_i})
\\
&&\displaystyle
+\int_0^\infty\sum_{\mbox{\tiny$\begin{array}{c} i<j\in I  \\ k\in I\end{array}$}} \delta_{ji} u_h (\varphi_i-\varphi_k) \delta_{ij}\frac{1}{1+u_h} (\varphi_{\a_k}\nabla\varphi_{\a_j}, \nabla\varphi_{\a_i})
\\
&\le&\displaystyle
\int_0^\infty\sum_{i<j\in  I} \delta_{ji} u_h (sz_h\varphi_i-\varphi_i)\delta_{ij}\frac{1}{1+u_h} (\nabla\varphi_{\a_j}, \nabla\varphi_{\a_i})
\\
&&\displaystyle
+\int_0^\infty\sum_{\mbox{\tiny$\begin{array}{c} i<j\in I   \\ k\in I\end{array}$}} \delta_{ji} u_h (\varphi_i-\varphi_k) \delta_{ij}\frac{1}{1+u_h} (\varphi_{\a_k}\nabla\varphi_{\a_j}, \nabla\varphi_{\a_i})
\\
&&
-\|(i_h\varphi^{\frac{1}{2}}-\varphi^\frac{1}{2})\nabla i_h\log(1+u_h)\|_{L^2(\Omega)}^2
\\
&&
-\|\varphi^\frac{1}{2} \nabla i_h\log(1+u_h)\|_{L^2(\Omega)}^2.
\end{array}
$$
Then, a consideration of \eqref{error:sz_h}, \eqref{bound:log(1+uh)_I}, and \eqref{err_LinfW1inf:i_h} yields that the residual terms go to zero as $h\to 0$, and, from the lower semicontinuity of the norm, we infer on noting  \eqref{w_conv_L2H1:log(1+uh)->log(1+u)} that     
$$
\liminf_{h\to 0}  \Sigma_{11}\le -\int_0^\infty \|\varphi^{\frac{1}{2}}\nabla\log(1+u)\|^2_{L^2(\Omega)}\,\dt.
$$
For $\Sigma_{12}$,  note
$$
\begin{array}{rcl}
\Sigma_{12}&=&\displaystyle \int_0^\infty \sum_{i<j\in  I} \left(\frac{\delta_{ji} u_h}{1+u_j}-\delta_{ji}\log (1+u_h)\right)\delta_{ij}sz_h\varphi (\nabla\varphi_{\a_j}, \nabla\varphi_{\a_i})
\\
&&\displaystyle
+\int_0^\infty (\nabla \log (1+u_h), \nabla sz_h\varphi)\,\dt.
\end{array}
$$
In view of \eqref{sec4.5.2:lab1} and \eqref{bound:log(1+uh)_I} and using \eqref{error:sz_h} and the fact $|\delta_{ij} sz_h\varphi|\le C_{\rm sta } h \|\varphi\|_{W^{1,\infty}(\Omega)}$ holds by \eqref{sta:sz_h} yields 
$$
\Sigma_{21}\to \int_0^\infty (\nabla \log (1+u), \nabla \varphi)\,\dt\quad\mbox{ as }\quad h\to0. 
$$
Thus we conclude 
$$
\liminf_{h\to0}\int_0^\infty(\nabla u_h, \nabla  i_h \frac{sz_h \varphi}{1+u_h})\,\dt\le -\int_0^\infty \|\varphi^{\frac{1}{2}}\log(1+u)\|^2_{L^2(\Omega)}\,\dt+\int_0^\infty (\nabla \log (1+u), \nabla \varphi)\,\dt.
$$

$\bullet$ For the chemotaxis term, recall $u_\#=\min\{u_i, u_j\}$ and $u_{\bar\#}=\max\{u_i, u_j\}$.  It follows on noting \eqref{New_KS_discretization} that
$$
\begin{array}{rcl}
\displaystyle
(u_h\nabla w_h, \nabla i_h\frac{ sz_h \varphi}{1+u_h} )_*&=&\displaystyle\sum_{i<j\in  I}\tau_{ji}(u_h) \delta_{ji} w_h \delta_{ij}\frac{ sz_h \varphi}{1+u_h} (\nabla \varphi_{\boldsymbol{a}_j},\nabla\varphi_{\boldsymbol{a}_i})
\\
&=&\displaystyle
\sigma\sum_{i<j\in  I}\tau_{ji}(u_h) \delta_{ji}w_h sz_h\varphi_\# \delta_{\#\bar\#}\frac{1}{1+u_h} (\nabla  \varphi_{\boldsymbol{a}_j},\nabla\varphi_{\boldsymbol{a}_i})
\\
&&\displaystyle
+\sigma \sum_{i<j\in I}\tau_{ji}(u_h) \delta_{ji} w_h \frac{1}{1+u_{\bar \#}}\delta_{\#\bar\#}sz_h\varphi (\nabla \varphi_{\boldsymbol{a}_j},\nabla\varphi_{\boldsymbol{a}_i})
\\
&:=&\Sigma_{21}+\Sigma_{22},
\end{array}
$$
where $\sigma=1$ if $\#=i$ or $-1$ if $\#=j$. Next 
$$
\begin{array}{rcl}
\Sigma_{21}&=&\displaystyle
\sigma \sum_{i<j\in  I} \Lambda_{ij}(u_h)\delta_{ji}\log(1+u_h) \delta_{ji}w_h sz_h\varphi_\#  (\nabla  \varphi_{\boldsymbol{a}_j},\nabla\varphi_{\boldsymbol{a}_i})
\\
&=&\displaystyle
\frac{1}{2}\sum_{\mbox{\tiny$\begin{array}{c}i<j\in I\\ k \end{array}$}} \delta_{ik} \frac{u_h}{1+u_h}  \delta_{ji}\log(1+u_h) \delta_{ji}w_h sz_h\varphi_\#  (\varphi_{\a_k}\nabla  \varphi_{\a_j},\nabla\varphi_{\a_i})
\\
&&+\displaystyle
\frac{1}{2}\sum_{\mbox{\tiny$\begin{array}{c}i<j\in I\\ k \end{array}$}} \delta_{jk} \frac{u_h}{1+u_h}  \delta_{ji}\log(1+u_h) \delta_{ji}w_h sz_h\varphi_\#  (\varphi_{\a_k}\nabla  \varphi_{\a_j},\nabla\varphi_{\a_i})
\\
&&\displaystyle
+\sum_{\mbox{\tiny$\begin{array}{c}i<j\in I\\ k \end{array}$}} \frac{u_k}{1+u_k}\delta_{ji}\log(1+u_h) \delta_{ji}w_h sz_h\varphi_\#  (\varphi_{\a_k}\nabla  \varphi_{\a_j},\nabla\varphi_{\a_i})
\\
&=&\Sigma_{211}+\Sigma_{212}+\Sigma_{213}
\end{array}
$$
and
$$
\begin{array}{rcl}
\Sigma_{22}&=&\displaystyle
\sigma \sum_{i<j\in I}(\tau_{ji}(u_h)-u_i) \delta_{ji} w_h \frac{1}{1+u_{\bar \#}}\delta_{\#\bar\#}sz_h\varphi (\nabla \varphi_{\boldsymbol{a}_j},\nabla\varphi_{\boldsymbol{a}_i})
\\
&&+\displaystyle
\sigma \sum_{i<j\in I}u_i \delta_{ji} w_h \frac{1}{1+u_{\bar \#}}\delta_{\#\bar\#}q_h\varphi (\nabla \varphi_{\boldsymbol{a}_j},\nabla\varphi_{\boldsymbol{a}_i})
\\
&=&\displaystyle
 \frac{\sigma}{2}\sum_{i<j} \left(\frac{u_i}{1+u_i} \frac{\delta_{ji}\log (1+u_h)}{\delta_{ij}\frac{1}{1+u_h}} -u_i\right)\delta_{ji} w_h \frac{1}{1+u_{\bar \#}}\delta_{\#\bar\#}sz_h\varphi (\nabla \varphi_{\boldsymbol{a}_j},\nabla\varphi_{\boldsymbol{a}_i})
 \\
 &&\displaystyle
 +\frac{\sigma}{2}\sum_{i<j} \left(\frac{u_j}{1+u_j} \frac{\delta_{ji}\log (1+u_h)}{\delta_{ij}\frac{1}{1+u_h}} -u_j\right)\delta_{ji} w_h \frac{1}{1+u_{\bar \#}}\delta_{\#\bar\#}sz_h\varphi (\nabla \varphi_{\boldsymbol{a}_j},\nabla\varphi_{\boldsymbol{a}_i})
 \\
 &&\displaystyle
 +\frac{\sigma}{2}\sum_{i<j} (u_j-u_i)\delta_{ji} w_h \frac{1}{1+u_{\bar \#}}\delta_{\#\bar\#}sz_h\varphi (\nabla \varphi_{\boldsymbol{a}_j},\nabla\varphi_{\boldsymbol{a}_i})
 \\
&&+\displaystyle
\sigma \sum_{i<j}u_i \delta_{ji} w_h \frac{1}{1+u_{\bar \#}}\delta_{\#\bar\#}sz_h\varphi (\nabla \varphi_{\boldsymbol{a}_j},\nabla\varphi_{\boldsymbol{a}_i})
\\
&=&\Sigma_{221}+\Sigma_{222}+\Sigma_{223}+\Sigma_{224}.
\end{array}
$$

It will first show that $\int_0^T \Sigma_{211}\,\dt\to 0 $ as $h\to 0$. To carry this out, define $f_h=\frac{u_h}{1+u_h}$ and take $T\in\mathcal{T}_h$ with $\a_i, \a_j$  being two of its vertices. Now observe, by \eqref{ineq:log_II}, that  
$$
|f_i-f_j|\le |\log (1+u_i)-\log (1+u_j)|.
$$  
Thus, by the mean-value theorem,
$$
\begin{array}{c}
|\nabla i_h f_h|_{T}| |\a_i-\a_j| |\cos<\nabla i_h f_h|_{T}, \a_i-\a_j>|
\\
\le |\nabla i_h \log(1+u_h)|_T| |\a_i-\a_j| |\cos<\nabla i_h \log(1+u_h)|_{T}, \a_i-\a_j>. 
\end{array}
$$
As $\frac{x}{1+x}$ and $\log(1+x)$ are strictly monotonically increasing, we have that $\cos<\nabla i_h \log(1+u_h)|_T, \a_i-\a_j>=\cos<\nabla i_h f_h|_T, \a_i-\a_j>$. Therefore, $|\nabla i_h f_h|_{T}||\le |\nabla i_h\log(1+u_h)|_{T}|$ and hence, by \eqref{bound:log(1+uh)_I}, 
\begin{equation}\label{bound_L2H1:ihfh}
\int_0^t \|\nabla i_h f_h\|^2_{L^2(\Omega)}\,\dt\le 4 [  (1+t)\|u_{0h}\|_{L^1(\Omega)}+\|w_{0h} \|_{L^1(\Omega)}],
\end{equation}
for all $t>0$. In view of \eqref{bound:log(1+uh)_I}, \eqref{bound:wh_I}, and  \eqref{bound_L2H1:ihfh}, there exists $N\subset(0,T)$ with $|N|=0$ and $T>0$ being such that $\varphi(t)=0$ for all $t>T$, such that, for each $t\in (0,T)\backslash{N}$,  it follows that one finds $M_t, \widehat M_t,\widetilde M_t>0$ such that, for all $h>0$,  
$$
\|\nabla i_h\log(1+u_h)(t)\|_{L^2(\Omega)}\le M_t,
$$
$$
\|\nabla w_h(t)\|_{L^2(\Omega)}\le \widetilde M_t,
$$
and
$$
\|\nabla i_h f_h(t)\|_{L^2(\Omega)}\le \widehat M_t,
$$
and, furthermore, we have, by \eqref{s_conv_L2H1:wh->w}, that
$$
\|w_h(t)-w(t)\|_{H^1(\Omega)}\to 0\quad \mbox{ as }\quad h\to 0.
$$
From this latter and in application of the absolute continuity of the Lebesgue integral, it follows that, for each $\varepsilon'>0$, there is $\delta>0$ such that $\|\nabla w_h(t)\|_{H^1(\mathcal{A})}<\varepsilon'$ for all $\mathcal{A}\subset\Omega$ with $|\mathcal{A}|<\delta$ and $h>0$. Now let $\lambda>0$. Chebyshev's inequality leads to
$$
|\mathcal{A}_t|:=|\{\x\in\Omega : |\nabla i_h f_h(t) |> \lambda\}|\le \frac{1}{\lambda^2}\|\nabla i_h f_h(t)\|^2_{L^2(\Omega)}\le \frac{1}{\lambda^2} \widehat M_t.
$$
Choosing $\lambda$ to be large enough such that $|\mathcal{A}_t|<\delta$ and, for $\varepsilon''>0$,  $h$ to be small enough such that $\lambda h<\varepsilon''$ yields, on noting \eqref{inv_global: WlpToWmq} and \eqref{sta:sz_h}, that
$$
\begin{array}{rcl}
\Sigma_{211}&\le&\displaystyle C_{\rm inv}\|sz_h \varphi\|_{L^\infty(\Omega)} h \int_{\Omega}|\nabla i_h f_h | |\nabla i_h\log(1+u_h)| |\nabla w_h|\,\dx
\\
&\le&\displaystyle
C_{\rm sta} \|\varphi\|_{L^\infty(\Omega)} h \int_{\Omega\backslash{\mathcal{A}_t}} |\nabla i_h f_h | |\nabla i_h\log(1+u_h)| |\nabla w_h|\,\dx 
\\
&&\displaystyle
+C_{\rm sta} \|\varphi\|_{L^\infty(\Omega)}\int_{\mathcal{A}_t} |\nabla i_h f_h | |\nabla i_h\log(1+u_h)| |\nabla w_h|\,\dx
\\
&\le&\displaystyle
C \|\varphi\|_{L^\infty(\Omega)} h \lambda \|\nabla i_h\log(1+u_h)\|_{L^2(\Omega)}\|\nabla w_h\|_{L^2(\Omega)}
\\
&&\displaystyle
+C_{\rm inv} \|\varphi\|_{L^\infty(\Omega)} \|\nabla i_h\log(1+u_h)\|_{L^2(\Omega)}\|\nabla w_h\|_{L^2(\mathcal{A}_t)}
\\
&\le & C  \|\varphi\|_{L^\infty(\Omega)} \varepsilon'' \|\nabla i_h\log(1+u_h)\|_{L^2(\Omega)}\|\nabla w_h\|_{L^2(\Omega)}
\\
&&+ C  \|\varphi\|_{L^\infty(\Omega)} \varepsilon'  \|\nabla i_h\log(1+u_h)\|_{L^2(\Omega)}.
\end{array}
$$
Let $\varepsilon>0$, and take $\varepsilon'= \frac{\varepsilon}{2C  \|\varphi\|_{L^\infty(\Omega)} M_t}$ and $\varepsilon''= \frac{\varepsilon}{2C  \|\varphi\|_{L^\infty(\Omega)} M_t \widetilde M_t}$ to get 
$$
|\Sigma_{211}|\le \varepsilon,
$$
which implies that, for each $t\in(0,T)\backslash{N}$,  
$$
|\Sigma_{211}(t)|\to 0\quad\mbox{ as }\quad h\to0.
$$
From that the fact that
$$
|\Sigma_{211}(t)|\le \|\nabla i_h\log(1+u_h)(t)\|_{L^2(\Omega)} \|\nabla w_h(t)\|_{L^2(\Omega)}
$$
for all $t\in(0,T)\backslash{N}$, there holds
$$
\int_0^T |\Sigma_{211}|\,\dt\to 0\quad\mbox{ as }\quad h \to 0.             
$$
Analogously,
$$
\int_0^T |\Sigma_{212}|\,\dt\to 0\quad\mbox{ as }\quad h \to 0.             
$$

Let us now see that 
\begin{equation}\label{Sigma_213}
\int_0^T \Sigma_{213}\, \dt \to \int_0^T (\frac{\varphi u}{1+u} \nabla \log(1+u), \nabla \log v)\, \dt.
\end{equation}
Let $T\in\mathcal{T}_h$ with $\a_i, \a_j$ being two of its vertices. Write, by the mean value theorem, 
$$
\begin{array}{rcl}
f_i-f_j&=&|\nabla i_h f_h|_{T}| |\a_i-\a_j| |\cos<\nabla i_h f_h|_{T}, \a_i-\a_j>|
\\
&=& |\nabla f_h|_T(\boldsymbol{\xi_T})| |\a_i-\a_j| |\cos<\nabla f_h |_{T} (\boldsymbol{\xi}_T), \a_i-\a_j>   
\end{array}
$$
with $\boldsymbol{\xi}_T=\alpha \a_i + (1-\alpha) \a_j$ for certain $\alpha\in (0,1)$. As 
$$
\nabla f_h|_T(\boldsymbol{\xi}_T)= \frac{\nabla u_h|_T}{(1+u_h(\boldsymbol{\xi}_T))^2}, 
$$
and 
$$
|\cos<\nabla i_h f_h|_{T}, \a_i-\a_j>|= |\cos<\nabla f_h|_{T}, \a_i-\a_j>,
$$
we infer
\begin{equation}\label{eq_H1:fh_uh}
|\nabla i_h f_h|_T|= \frac{|\nabla u_h|_T|}{(1+u_h(\boldsymbol{\xi}_T))^2}.
\end{equation}
Let $0<\varepsilon'<1$ and take $M>0$ such that $1-\varepsilon'\le\frac{x}{1+x}<1$ for $x>M$. Define
$$
\mathcal{T}^M_h=\{T\in\mathcal{T}_h : \min_{\x\in T} u_h|_T(\x)\ge M\},
$$
and $\mathcal{T}^M_{hc}$ its complementary. Then, for each $t\in(0,T)\backslash{N}$, it follows, on using \eqref{err_LinfW1inf:i_h} and \eqref{eq_H1:fh_uh}, that 
$$
\begin{array}{rcl}
\|i_hf_h(t)- f_h(t)\|_{L^1(\Omega)}&\le& \displaystyle C_{\rm err} h^2 \sum_{T\in\mathcal{T}^M_{hc}}  \|\nabla u_h\|_{W^{1,\infty}(T)} |T|
\\
&&\displaystyle
+\sum_{T\in\mathcal{T}^M_h} \int_T | i_hf_h-f_h|\,\dx
\\
&\le &\displaystyle
C h^2 \sum_{T\in\mathcal{T}^M_{hc}} (1+u_h(\boldsymbol{\xi}_T))^2  \int_T  |\nabla i_h f_h|_T|^2\, \dx 
\\
&&\displaystyle
+ C \varepsilon' \sum_{T\in\mathcal{T}^M_h} \int_T\dx
\\ 
&\le &C (1+M) h^2 \|\nabla i_h f_h(t)\|^2_{L^2(\Omega)} +C \varepsilon' |\Omega|. 
\end{array}
$$          
Integrating over $(0,T)$ gives 
$$
\int_0^T \|i_h f_h - f_h\|_{L^1(\Omega)}\,\dt\le C  (1+M )h^2 \|\nabla i_h f_h\|^2_{L^2(0,T;L^2(\Omega))}+C T \varepsilon' |\Omega|. 
$$
For $\varepsilon>0$, we choose $\varepsilon'=\frac{ \varepsilon}{2C T|\Omega|}$ and $h$ to be small enough such that \break $C(1+M)h^2 \|\nabla i_h f_h\|^2_{L^2(0,T;L^2(\Omega))}<\frac{\varepsilon}{2}$ due to \eqref{bound_L2H1:ihfh}. In this way one can bound
$$
 \int_0^T \|i_h f_h - f_h\|_{L^1(\Omega)}\,\dt<\varepsilon.
$$
and hence
$$
 \int_0^T \|i_h f_h - f_h\|_{L^1(\Omega)}\,\dt \to 0\quad\mbox{ as }\quad h\to0.
$$
As a result,
$$
i_h f_h - f_h \to 0 \quad\mbox{ as }\quad h\to 0\quad\mbox{ a. e. in }\quad \Omega\times(0,T).
$$
Additionally, invoking \eqref{Com_err_L2H1:i_h} and \eqref{sta:sz_h}, one finds, on taking into account \eqref{bound_L2H1:ihfh}, that  
$$
\int_0^T \|i_h( sz_h \varphi i_h f_h)-sz_h\varphi i_h f_h\|_{L^1(\Omega)}\, \dt \to 0  \quad\mbox{ as }\quad h\to 0,
$$
which implies 
$$
i_h(sz_h\varphi i_h f_h)-sz_h\varphi i_h f_h) \to 0 \quad\mbox{ as }\quad h\to 0\quad\mbox{ a. e. in }\quad \Omega\times(0,T).
$$
Next we write
$$
\begin{array}{rcl}
\Sigma_{213}&=&([i_h(sz_h\varphi i_h f_h)- sz_h\varphi i_h f_h]  \nabla i_h\log(1+u_h), \nabla w_h)
\\
&&+(sz_h\varphi (i_h f_h- f_h) \nabla \log(1+u_h), \nabla w_h)
\\
&&+ (sz_h\varphi f_h \nabla \log(1+u_h), \nabla w_h).
\end{array}
$$
Recalling \eqref{s_conv_L1L1:uh->u} and  \eqref{s_conv_L2H1:wh->w}, we infer that 
$$
(i_h( sz_h\varphi i_h f_h)-sz_h\varphi i_h f_h)\nabla w_h\to 0 \quad\mbox{ a.e. in}\quad \Omega\times(0,T)\quad\mbox{ as }\quad h\to 0,
$$

$$
[sz_h\varphi (i_h f_h- f_h)] \nabla w_h\to 0\quad\mbox{ a.e. in}\quad \Omega\times(0,T)\quad\mbox{ as }\quad h\to 0,
$$
and
$$
sz_h\varphi f_h \nabla w_h\to \frac{\varphi u}{1+u} \nabla w\quad\mbox{ a.e. in}\quad \Omega\times(0,T)\quad\mbox{ as }\quad h\to 0,
$$
whereupon, as
$$
|(i_h( sz_h\varphi i_h f_h)-sz_h\varphi i_h f_h)\nabla w_h|\le C_{\rm sta} \|\varphi\|_{L^\infty(\Omega)} |\nabla \tilde w|
$$
$$
|sz_h\varphi(i_hf_h-f_h)] \nabla w_h|\le C_{\rm sta} \|\varphi\|_{L^\infty(\Omega)} |\nabla  \tilde w|
$$
and
$$
|sz_h\varphi f_h \nabla w_h|\le C_{\rm stab} \|\varphi\|_{L^\infty(\Omega)} |\nabla \tilde w|,
$$
it results from \eqref{w=-log v/v0} in
$$
\int_0^T \Sigma_{213}\, \dt\to -\int_0^T (\frac{\varphi u}{1+u} \nabla \log(1+u), \nabla \log v)\, \dt,
$$ 
since 
$$
sz_h\varphi f_h \nabla w_h\to \frac{\varphi u}{1+u} \nabla w\quad\mbox{ in }\quad L^2(0,T; L^2(\Omega))\quad\mbox{ as }\quad h\to 0,
$$
from the Lebesgue dominated convergence theorem. 

From the fact that 
$$
\left|\frac{u_i}{1+u_i} \frac{\delta_{ji}\log (1+u_h)}{\delta_{ij}\frac{1}{1+u_h}} -u_i\right|\le |\delta_{ji} u_h|,
$$
due to \eqref{sec4.1:B1} and \eqref{sec4.1:B2}, we bound
$$
\Sigma_{221}\le C_{\rm sta} h \|\nabla \varphi\|_{L^\infty(\Omega)} \|\nabla w_h\|_{L^2(\Omega)}\left(\sum_{i<j\in I} \frac{\delta_{ji}^2 u_h}{(1+u_i) (1+u_j)}\right)^\frac{1}{2}
$$
and hence 
$$
\int_0^T \Sigma_{221}\, \dt\to 0 \quad\mbox{ as }\quad h\to 0.
$$
Similarly, one gets
$$
\int_0^T \Sigma_{222}\, \dt\to 0 \quad\mbox{ as }\quad h\to 0
$$
and 
$$
\int_0^T \Sigma_{223}\, \dt\to 0 \quad\mbox{ as }\quad h\to 0.
$$
In order to deal with $\Sigma_{224}$, we decompose it as 
$$
\begin{array}{rcl}
\Sigma_{224}&=&\displaystyle
\sigma \sum_{i<j\in I}u_i \delta_{ji} w_h \left(\frac{1}{1+u_{\bar \#}}-\frac{1}{1+u_i}\right)\delta_{\#\bar\#}sz_h\varphi (\nabla \varphi_{\boldsymbol{a}_j},\nabla\varphi_{\boldsymbol{a}_i})
\\
&=&\displaystyle
\sigma \sum_{i<j\in I} \frac{u_i}{1+u_i} \delta_{ji} w_h\delta_{\#\bar\#}sz_h\varphi (\nabla \varphi_{\boldsymbol{a}_j},\nabla\varphi_{\boldsymbol{a}_i})
\\
&\le & \displaystyle
C_{\rm sta} h \|\nabla\varphi\|_{L^\infty(\Omega)} \left(- \sum_{i<j\in I}	\frac{u^2_i}{(1+u_i)^2} \frac{\delta_{ji} u_h}{(1+u_{\bar \#})^2} (\nabla \varphi_{\boldsymbol{a}_j},\nabla\varphi_{\boldsymbol{a}_i})\right)^{\frac{1}{2}} \|\nabla w_h\|_{L^2(\Omega)} 
\\
 &+&\displaystyle
\sum_{i<j\in I} \frac{u_i}{1+u_i} \delta_{ji} w_h\delta_{ij}sz_h\varphi (\nabla \varphi_{\boldsymbol{a}_j},\nabla\varphi_{\boldsymbol{a}_i}).
\end{array}
$$
For the proof of
$$ 
\int_0^T \sum_{i<j\in I} \frac{u_i}{1+u_i} \delta_{ji} w_h\delta_{ij}sz_h\varphi (\nabla \varphi_{\boldsymbol{a}_j},\nabla\varphi_{\boldsymbol{a}_i})\,\dt\to -\int_0^T (\frac{u}{1+u}\nabla\log v, \nabla \varphi)\,\dt\quad{ as }\quad h\to0,
$$
we reason as in \eqref{Sigma_213}.

$\bullet$ Finally, we proceed with the stabilizing term. Write
$$
\begin{array}{rcl}
\displaystyle
(B(u_h, v_h) u_h, i_h \frac{sz_h \varphi}{1+u_h})&=&\displaystyle\sum_{i<j\in I}\beta_{ji}(u_h, v_h)\, \delta_{ji} u_h\, \delta_{ji}\frac{sz_h \varphi}{1+u_h}
\\
&=&\displaystyle
\sum_{i<j\in I}\beta_{ji}(u_h,v_h)\, \delta_{ji} u_h\, sz_h \varphi_i\, \delta_{ij}\frac{1}{1+u_h}
\\
&&\displaystyle
+\sum_{i<j\in I}\beta_{ji}(u_h,v_h) \frac{\delta_{ji} u_h}{1+u_j}\,\delta_{ij}sz_h\varphi
\\
&=&\Sigma_{31}+\Sigma_{32}.
\end{array}
$$

Additionally, for $\Sigma_{31}$,    
$$
\begin{array}{rcl}
\displaystyle
\int_0^T \Sigma_{31} \,\dt&=&\displaystyle
 \int_0^T \sum_{i<j\in I}\beta_{ji}(u_h,v_h) \varphi_i \frac{\delta_{ji}^2 u_h}{(1+u_j)(1+u_i)}\, \dt
\\
&&\displaystyle
+\int_0^T \sum_{i<j\in I}\beta_{ji}(u_h,v_h) (sz_h\varphi_i -\varphi_i) \delta_{ij} \frac{1}{1+u_h}\, \dt.
\end{array}
$$
Observe 
$$
\int_0^T \sum_{i<j\in I}\beta_{ji}(u_h,v_h) \frac{\delta_{ji}^2 u_h}{(1+u_j)(1+u_i)}\, \dt\ge 0.
$$
and, from \eqref{bound:log(1+uh)_I} and  by \eqref{error:sz_h} for $p=\infty$, $m=1$ and $s=0$, 
$$
\int_0^T \sum_{i<j\in I}\beta_{ji}(u_h,v_h) (sz_h\varphi_i -\varphi_i) \frac{\delta_{ij}^2 u_h}{1+u_h}\, \dt \to 0\quad\mbox{ as } h\to 0.
$$
Therefore, 
$$
\liminf_{h\to 0 } \int_0^T \Sigma_{31}(t)\,\dt=
\int_0^T \sum_{i<j\in I}\beta_{ji}(u_h,v_h) \varphi_i \frac{\delta_{ij}^2 u_h}{1+u_h}\, \dt\ge 0.
$$

For $\Sigma_{32}$,  we decompose it as 
$$
\begin{array}{rcl}
\Sigma_{32}&=&\displaystyle
\sum_{i<j\in I}\beta_{ji}(u_h,v_h) \Big( \frac{\delta_{ji} u_h}{1+u_j}-\delta_{ji}i_h\log(1+u_h) \Big) \,\delta_{ij}sz_h\varphi
\\
&&\displaystyle
+\sum_{i<j\in I}\beta_{ji}(u_h,v_h) \delta_{ji}i_h\log(1+u_h)\,\delta_{ij}sz_h\varphi
\\
&:=&\Sigma_{321}+\Sigma_{322}.
\end{array}
$$
On invoking \eqref{sec4.5.2:lab1}  yields
$$
\begin{array}{rcl}
\Sigma_{321}&\le&\displaystyle \sum_{i<j\in I} \beta_{ji}(u_h,v_h) \frac{\delta^2_{ji}u_h}{(1+u_i)(1+u_j)} |\delta_{ij}sz_h\varphi|
\\
&=& \displaystyle
|\delta_{ij}sz_h\varphi|  \sum_{i<j\in I} \beta_{ji}(u_h,v_h)  \frac{\delta^2_{ji}u_h}{(1+u_i)(1+u_j)}
\\
&=& \displaystyle
C_{\rm sta} h \|\nabla\varphi\|_{L^\infty(\Omega)}  \sum_{i<j\in I} \beta_{ji}(u_h,v_h)  \frac{\delta^2_{ji}u_h}{(1+u_i)(1+u_j)}, 
\end{array}
$$
and hence, by \eqref{bound:log(1+uh)_I},  
$$
\int_0^t \Sigma_{321} \, \dt\to0\quad\mbox{ as } \quad h\to 0.  
$$

Noting that 
$$
\begin{aligned} |\beta_{ji}(u_h, v_h)| &\le |\delta_{ji}\log v_h|\, \Big|\frac{u_i}{\delta_{ji} u_h}-\frac{\tau_{ji}(u_h)}{\delta_{ji}u_h}\Big|\, |(\nabla \varphi_{\a_j}, \nabla \varphi_{\a_i})| \\ &\le |\delta_{ji}\log v_h|\, |(\nabla \varphi_{\a_j}, \nabla \varphi_{\a_i})| \qquad \text{(by \eqref{sec4.1:B1}--\eqref{sec4.1:B2} and \eqref{Positivity_uh})} \\ &\le C_{\mathrm{inv}}\,|\delta_{ji}\log v_h| \qquad \text{(by \eqref{inv_global: WlpToWmq} and quasi-uniformity),} \end{aligned}
$$
we obtain 
$$
\begin{array}{rcl}
\Sigma_{322}&\le&\displaystyle C \sum_{i<j\in I} |\delta_{ji}v_h |\delta_{ji}i_h\log(1+u_h) |\delta_{ij}sz_h\varphi|
\\
&=& \displaystyle
 C |\delta_{ij}sz_h\varphi| \left ( \sum_{i<j\in I} |\delta_{ji}v_h|^2  \right )^\frac12 \left ( \sum_{i<j\in I} |\delta_{ji}i_h\log(1+u_h)|   \right )^\frac{1}{2}
\\
&=& C h \|\nabla\varphi\|_{L^\infty(\Omega)} \|\nabla v_h\|_{L^2(\Omega)} \|i_h\log(1+u_h)\|_{L^2(\Omega)}.
\end{array}
$$
Consequently, 
$$
\begin{array}{rcl}
\displaystyle
\int_0^T\Sigma_{322}\, \dt&\le&\displaystyle C_{\rm sta } h \|\varphi\|_{L^\infty(0,T;W^{1,\infty}(\Omega)} \left(\int_0^T \|\nabla v_h\|^2_{L^2(\Omega)}\, \dt \right)^{\frac{1}{2}}
\\
&&\displaystyle
\left(\int_0^T \|\nabla i_h\log(1+u_h)\|^2_{L^2(\Omega)}\, \dt\right)^{\frac{1}{2}}\to 0 \mbox{ as }h\to 0 
\end{array}
$$
from \eqref{bound:wh_I} and \eqref{bound:log(1+uh)_I}.

This completes the proof of Theorem  \ref{Th:main}.

\end{document}